\newtheorem{theorem}{\textbf{Theorem}}[section]
\newtheorem{proposition}[theorem]{\textbf{Proposition}}
\newtheorem{corollary}[theorem]{\textbf{Corollary}}
\newtheorem{lemma}[theorem]{\textbf{Lemma}}
\newtheorem{remark}[theorem]{Remark}
\DeclareMathOperator{\GL}{GL}
\DeclareMathOperator{\h}{h}
\let\v\relax
\DeclareMathOperator{\v}{v}
\DeclareMathOperator{\unc}{unc}
\DeclareMathOperator{\col}{col}
\DeclareMathOperator{\mon}{mon}
\newcommand{\plus}{\tikz[anchor=base, baseline=-1pt]{\node[scale=0.8, draw, thick, circle, inner sep=0pt] {$+$};}\xspace}
\newcommand{\minus}{\tikz[anchor=base, baseline=-1pt]{\node[scale=0.8, draw, thick, circle, inner sep=0pt] {$-$};}\xspace}
\newcommand{\boson}[1]{\tikz[anchor=base, baseline=-1pt]{\node[scale=0.8, draw, thick, circle, inner sep=1pt] {$#1$};}\xspace}
\newcommand{\red}{\color{red}}
\newcommand{\blue}{\color{blue}}
\newcommand{\green}{\color{darkgreen}}
\definecolor{darkgreen}{RGB}{0,180,0}
\colorlet[named]{green}{darkgreen}
\begin{document}
\title{Colored Bosonic Models and Matrix Coefficients}
\author{Daniel Bump}
\address{Department of Mathematics, Stanford University, Stanford, CA 94305-2125}
\email{bump@math.stanford.edu}
\author{Slava Naprienko}
\address{Department of Mathematics, UNC Chapel Hill, North Carolina 27599}
\email{slava@naprienko.com}

\begin{abstract}
We develop the theory of colored bosonic models (initiated by
Borodin and Wheeler). We will show how a family of such models
can be used to represent the values of Iwahori vectors in the
``spherical model'' of representations of $\GL_r(F)$, where $F$
is a nonarchimedean local field.
Among our results are a \textit{monochrome factorization}, which is the realization of the Boltzmann weights by fusion of simpler weights, 
a \textit{local lifting} property relating the colored models
with uncolored models, and an action of the Iwahori Hecke algebra on the
partition functions of a particular family of models by 
Demazure-Lusztig operators. 
As an application of the local lifting property we reprove a theorem 
of Korff evaluating the partition functions of the uncolored models in 
terms of Hall-Littlewood polynomials. 
Our results are very closely parallel to the theory of fermionic 
models representing Iwahori Whittaker functions developed by Brubaker,
Buciumas, Bump and Gustafsson, with many striking relationships
between the two theories, confirming the philosophy that the
spherical and Whittaker models of principal series representations
are dual.
\end{abstract}
\maketitle
\tableofcontents

\section{Introduction}

In Brubaker, Buciumas, Bump and Gustafsson~{\cite{BBBGIwahori}} it is proved
that for a basis $\{ \omega_w \}$ of Iwahori Whittaker functions on $G =
\GL_r (F)$ over a nonarchimedean local field, and for all $g \in G$,
there is a solvable lattice model whose partition function equals $\omega_w
(g)$. The lattice models are ``fermionic'' and are associated with the quantum
affine supergroup $U_{\sqrt{q}} (\widehat{\mathfrak{g}\mathfrak{l}} (r| 1))$. On the
other hand, in this paper we will consider bosonic models, associated to the
quantum group $U_{\sqrt{q}} (\widehat{\mathfrak{s}\mathfrak{l}}_{r + 1})$ whose
partition functions are Iwahori spherical matrix coefficients. The two
theories are very closely parallel, as we will now explain.

Before specializing to the case where $G = \GL_r$, let $G$ be a split
reductive algebraic group over a nonarchimedean local field $F$. Let $K$ be a
special maximal compact subgroup. The group $G$ has a family $\{
(\pi_{\mathbf{z}}, I (\mathbf{z})) \}$ of representations, the
{\textit{spherical principal series}}, parametrized by an element $\mathbf{z}$
of a complex torus $\hat{T}$, the maximal torus of the Langland dual group. If
$\mathbf{z}$ is in general position, $\pi_{\mathbf{z}}$ is irreducible.
The space $I (\mathbf{z})$ is infinite dimensional, but we will focus on the
$| W |$-dimensional space $I (\mathbf{z})^J$ of vectors invariant under the
Iwahori subgroup $J$. Here $W$ is the Weyl group. The space $I
(\mathbf{z})^J$ has a standard basis $\phi_w^{\mathbf{z}}$ indexed by
elements $w$ of~$W$.

There are two noteworthy linear functionals on $I (\mathbf{z})$, the
Whittaker functional $\mathcal{W}_{\mathbf{z}}$, and the spherical
functional which we will denote $\mathcal{S}_{\mathbf{z}}$. Both functionals
are unique up to constant multiple, and have natural normalizations as
integrals. We may use them to define ``special functions,'' which are
functions $\omega_w^{\mathbf{z}}$ and $\sigma_w^{\mathbf{z}}$ on $G$
defined by
\begin{equation}
\label{eq:iwahoriws} \omega_w^{\mathbf{z}} (g)
   =\mathcal{W}_{\mathbf{z}} (\pi_{\mathbf{z}} (g) \phi_w^{\mathbf{z}}),
   \qquad \sigma_w^{\mathbf{z}} (g) =\mathcal{S}_{\mathbf{z}}
   (\pi_{\mathbf{z}} (g) \phi_w^{\mathbf{z}}) .
\end{equation}
We will call these {\textit{Iwahori Whittaker functions}} and
{\textit{Iwahori-Spherical functions}}, respectively. The spaces of functions
$\mathcal{W}_{\mathbf{z}} (\pi_{\mathbf{z}} (g) \phi)$ and
$\mathcal{S}_{\mathbf{z}} (\pi_{\mathbf{z}} (g) \phi)$ for $\phi \in I
(\mathbf{z})$ are the {\textit{Whittaker model}} and {\textit{spherical model}}
of the representation $\pi_{\mathbf{z}}$, and the functions
(\ref{eq:iwahoriws}) are the Iwahori-fixed vectors in the models.

There is a significant parallel between these two types of special functions.
It is explained in
{\cite{BrubakerBumpFriedbergMatrix,BBL,BrubakerBumpFriedbergRogawski,BBBF}}
that these two models are related to representations of the (extended) affine Hecke
algebra $\tilde{\mathcal{H}}$. This is the algebra generated by $T_i$ (corresponding to the simple
reflections $s_i \in W$) subject to the quadratic relations
\[ T_i^2 = (q - 1) T_i + q \]
and the braid relations, together with another subalgebra isomorphic to the
group algebra of the weight lattice $\Lambda$ of the Langlands dual group
$\hat{G}$. We will denote by $\mathcal{H}$ the finite-dimensional algebra
generated by the $T_i$, which is a deformation of the group algebra of the
Weyl group.

The two representations of $\tilde{\mathcal{H}}$ in question are called the
{\textit{antispherical}} and {\textit{spherical}} representations, and are induced
from the characters of $\mathcal{H}$ that send $T_i$ to $- 1$ and $q$,
respectively. They can be realized as actions of $\tilde{\mathcal{H}}$ on the
ring $\mathcal{O} (\hat{T})$ of (Laurent) polynomials in $\mathbf{z}$. To
this end, define operators $\mathfrak{T}_i$ and $\mathcal{L}_i$ on
$\mathcal{O} (\hat{T})$ by
\[ \mathfrak{T}_i f (\mathbf{z}) = \frac{f (\mathbf{z}) - f (s_i
   \mathbf{z})}{\mathbf{z}^{\alpha_i} - 1} - q \frac{f (\mathbf{z})
   -\mathbf{z}^{- \alpha_i} f (s_i \mathbf{z})}{\mathbf{z}^{\alpha_i} -
   1} \]
and
\[ \mathcal{L}_i f (\mathbf{z}) = \frac{f (\mathbf{z}) - f (s_i
   \mathbf{z})}{\mathbf{z}^{\alpha_i} - 1} - q \frac{f (\mathbf{z})
   -\mathbf{z}^{\alpha_i} f (s_i \mathbf{z})}{\mathbf{z}^{\alpha_i} - 1}
   . \]
The operators $\mathcal{L}_i$ are called {\textit{Demazure-Lusztig operators}},
and we will call the $\mathfrak{T}_i$ {\textit{Demazure-Whittaker operators}}.
These lead to recursions for the Iwahori Whittaker and spherical functions
({\cite{BrubakerBumpFriedbergMatrix}}), namely
\begin{equation}
  \label{eq:wdemrec} \omega_{s_i w}^{\mathbf{z}} (g) =
  \left\{\begin{array}{ll}
    \mathfrak{T}_i \omega_w^{\mathbf{z}} (g) & \text{if $s_i w > w,$}\\
    \mathfrak{T}_i^{- 1} \omega_w^{\mathbf{z}} (g) & \text{if $s_i w < w$}
  \end{array}\right.
\end{equation}
and similarly
\begin{equation}
  \label{eq:sdemrec} \sigma_{s_i w}^{\mathbf{z}} (g) =
  \left\{\begin{array}{ll}
    \mathcal{L}_i \sigma_w^{\mathbf{z}} (g) & \text{if $s_i w > w,$}\\
    \mathcal{L}_i^{- 1} \sigma_w^{\mathbf{z}} (g) & \text{if $s_i w < w$} .
  \end{array}\right.
\end{equation}
These actions of $\mathcal{H}$ by Demazure-like operators can be extended
to actions of the affine Hecke algebra $\widetilde{H}$ by complementing them with an
action of a large abelian subalgebra isomorphic to the weight lattice.
See~\cite{BrubakerBumpFriedbergMatrix,BrubakerBumpFriedbergRogawski} for this point.
The spherical representation of $\widetilde{H}$, in which the Hecke
generators act by Demazure-Lusztig operators, was shown by 
Lusztig~\cite{LusztigEquivariant} to be an action on the equivariant
K-theory of flag varieties, a fact that was applied by Kazhdan and
Lusztig~\cite{KazhdanLusztigDeligneLanglands} to the Deligne-Lusztig conjecture.

Equation (\ref{eq:wdemrec}) means that if we know the value $\omega_w (g)$ for one $w$, then
obtaining the rest is a matter of applying the $\mathfrak{T}_i$. Now for the
Whittaker model, it was shown in~{\cite{BBBGIwahori}} that there is a $w$ for
which $\omega_w (g)$ is in fact a monomial, but this $w$ depends on $g \in
G$. Specifically, $\omega_w (g)$ depends essentially only on the coset of $g$
in $N \backslash G / J$, where $N$ is a maximal unipotent subgroup, and as
coset representatives we may take $g = \varpi^{\lambda} w_2$ where
$\varpi^{\lambda} = \operatorname{diag} (\varpi^{\lambda_1}, \cdots,
\varpi^{\lambda_r})$, and $\varpi$ is a prime element, and $w_2 \in W$.

\begin{remark}
  \label{rem:basebtu}If $w = w_2$, it is shown in~{\cite{BBBGIwahori}} that
  $\omega_w (g)$ is a monomial, as a consequence of support considerations for
  the Jacquet integral defining the Whittaker function. Furthermore $\omega_w
  (g) = 0$ unless $\lambda$ is {\textit{$w_2$-almost dominant}}, a technical
  condition.
\end{remark}

In Brubaker, Buciumas, Bump and Gustafsson~{\cite{BBBGIwahori}} the following fact is proved. 
It is reproved using a different technique in Naprienko~\cite{NaprienkoWhittaker}.

\begin{theorem}[{\cite{BBBGIwahori}}, Theorem A, \cite{NaprienkoWhittaker}, Section~3.3.1]
  For every $g \in G$, and for every $w \in W$ there is a solvable lattice
  model whose partition function is $\omega_w (g)$.
\end{theorem}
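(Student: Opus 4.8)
The plan is to reduce everything to two ingredients already visible in the excerpt: a base case in which $\omega_w(g)$ is a single monomial, and the Demazure--Whittaker recursion~(\ref{eq:wdemrec}), which then propagates the statement to every $w\in W$. Concretely, I would first attach to each $g$ an explicit ``frozen'' solvable lattice model whose partition function is the monomial of Remark~\ref{rem:basebtu}, and then show that the operators $\mathfrak T_i^{\pm1}$ are implemented on partition functions by an operation on the lattice models --- transposing two adjacent spectral parameters and splicing in the model's $R$-matrix. Iterating this operation along a reduced word produces a solvable model for each $\omega_w(g)$.

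For the base case, write $g=\varpi^{\lambda}w_2$, a choice of representative for the class of $g$ in $N\backslash G/J$. By Remark~\ref{rem:basebtu}, $\omega_{w_2}(g)$ is a monomial in $\mathbf z$, and it vanishes unless $\lambda$ is $w_2$-almost dominant. I would build a rectangular grid whose boundary data encode $\lambda$ (through the charges carried on the horizontal boundary edges) and $w_2$ (through the ordering of colors along one of the vertical boundaries). The admissible vertices together with these boundary conditions should pin down a \emph{unique} state, whose Boltzmann weight is therefore a monomial; the arithmetic of Remark~\ref{rem:basebtu} is exactly what is needed to identify it with $\omega_{w_2}(g)$, the vanishing case corresponding to there being no admissible state at all. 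Solvability costs nothing extra here: the Boltzmann weights are specializations of the $R$-matrix of $U_{\sqrt q}(\widehat{\mathfrak{gl}}(r|1))$, so the Yang--Baxter relation holds by construction.

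For the inductive step, suppose $\mathfrak S$ is a solvable model with partition function $\omega_w(g)$, the spectral parameters $z_1,\dots,z_r$ being attached to its columns in order. To reach $\omega_{s_iw}(g)$ I would transpose $z_i$ and $z_{i+1}$ and, on one side of the lattice, insert the $R$-vertex intertwining these two spectral lines. A Yang--Baxter ``train'' argument --- dragging that $R$-vertex across the whole lattice by repeated application of the Yang--Baxter equation --- shows that the partition function of the enlarged model equals the boundary action of the associated $R$-matrix on $\omega_w(g)$ with $z_i,z_{i+1}$ interchanged. For the boundary conditions of our models this boundary operator collapses to an explicit rational operator in $z_i,z_{i+1}$, and one checks it is $\mathfrak T_i$ when $s_iw>w$ and $\mathfrak T_i^{-1}$ when $s_iw<w$; by~(\ref{eq:wdemrec}) the enlarged model then has partition function $\omega_{s_iw}(g)$. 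Starting from the base model for $w_2$ and iterating along a reduced expression carrying $w_2$ to any given $w$ yields the desired solvable model, and the result does not depend on the reduced expression because the $\mathfrak T_i$ satisfy the braid and quadratic relations of $\mathcal H$.

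The main obstacle is the inductive step: showing that the $R$-vertex/spectral-swap operation reproduces $\mathfrak T_i$ \emph{on the nose}, and that one genuinely lands on $\mathfrak T_i^{-1}$ (not merely $\mathfrak T_i$ up to a normalization) when $s_iw<w$. This hinges on pinning down the correct $R$-vertex, on a careful Yang--Baxter computation to push it through the lattice, and on a boundary analysis reducing the abstract $R$-matrix action to the rational operator $\mathfrak T_i^{\pm1}$ --- and it is precisely the quadratic relation $\mathfrak T_i^2=(q-1)\mathfrak T_i+q$ that makes the dichotomy in~(\ref{eq:wdemrec}) coherent. A lesser difficulty is the combinatorial bookkeeping of the base case: translating $(\lambda,w_2)$ and the $w_2$-almost-dominance condition into boundary conditions, and matching the frozen weight against the monomial of Remark~\ref{rem:basebtu}. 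Maintaining solvability along the whole tower is, by comparison, automatic, since every weight that appears is a specialization of the single universal $R$-matrix.
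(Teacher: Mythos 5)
Your overall plan — a monostatic base case at $w=w_2$ whose single state reproduces the monomial of Remark~\ref{rem:basebtu}, followed by the Demazure--Whittaker recursion (\ref{eq:wdemrec}) established through a Yang--Baxter ``train'' argument — is exactly the strategy of~\cite{BBBGIwahori} that this paper summarizes, and it parallels the spherical argument carried out here (Proposition~\ref{prop:monostatic}, Proposition~\ref{prop:demtrain}, Theorem~\ref{thm:demeval}). The gap is in your inductive mechanism. The model representing $\omega_{s_iw}(g)$ cannot be the ``enlarged'' lattice obtained from the model for $\omega_w(g)$ by swapping $z_i,z_{i+1}$ and splicing in one $R$-vertex. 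By the very Yang--Baxter equation you invoke, when the $R$-vertex sits against the boundary whose two edges carry the prescribed boundary spins it is frozen into a unique configuration, so the enlarged partition function is a scalar such as $z_i-tz_{i+1}$ times the original partition function at $s_i\mathbf z$; pushed to the other side it becomes a two-term linear combination, with polynomial $R$-matrix entries as coefficients, of partition functions of two \emph{differently bounded} models. In no case is it $\mathfrak T_i^{\pm1}\omega_w(g)$: an operator with denominator $\mathbf z^{\alpha_i}-1$ can never be the net effect of adjoining a single vertex with polynomial weights. The divided-difference structure appears only after one equates the two sides of the train identity and solves the resulting linear relation for the unknown partition function, exactly as in the rearrangement step of Proposition~\ref{prop:demtrain} (Figure~\ref{fig:train}).

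Consequently you must define, in advance, a family of models whose boundary conditions encode $w$ itself in addition to $\lambda$ and $w_2$ — this is precisely the ``intricacy'' the paper highlights, realized by permuting the colors on one boundary by $w$ — and then prove that the partition functions of this fixed family satisfy the same recursion (\ref{eq:wdemrec}) as the $\omega_w(g)$; the base case $w=w_2$ then pins down the whole family. Your proposal never constructs these models for $w\neq w_2$ (the boundary data you specify encode only $\lambda$ and $w_2$), so the induction has nothing to induct on, and the spliced-lattice substitute computes the wrong quantity. Two smaller inaccuracies: the spectral parameters are attached to rows rather than columns, and the dichotomy between $\mathfrak T_i$ and $\mathfrak T_i^{-1}$ in the lattice-model recursion is read off from the relative order of the two boundary colors being exchanged (i.e.\ from whether $s_iw>w$ for the boundary permutation), not from the quadratic relation of $\mathcal H$ alone.
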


The boundary conditions for the model must encode three pieces of data, 
namely $w, w_2$ and $\lambda$, which is assumed to be $w_2$-almost dominant. 
If $w = w_2$, then the model has a unique state, and its partition
function is a monomial, which may be compared with the base case for the
Iwahori Whittaker functions (Remark~\ref{rem:basebtu}). Then the general case
follows from (\ref{eq:wdemrec}), because the lattice models satisfy the same
Demazure-Whittaker recursion. This is proved using the Yang-Baxter equation.

The purpose of this paper is to show that all of the above features extend to
the spherical model. Our main result (Corollary~\ref{cor:maincor}) is that
for the basis $\sigma_w$ of Iwahori fixed vectors in the spherical model,
and for any $g\in G$ there is a solvable lattice model whose partition
function is $\sigma_w(g)$. 

The models, it turns out, are essentially known, for they are specializations of
models previously investigated by Borodin and Wheeler~{\cite{BorodinWheelerColored}}.
Although they are a special case of these more general models, it is a special case that warrants separate treatment.

The $p$-adic side of the story was
previously investigated by Ion~{\cite{IonNonsymmetric}} (who showed that the
Iwahori-spherical functions are nonsymmetric Macdonald polynomials) and
Brubaker, Bump and Friedberg~{\cite{BrubakerBumpFriedbergRogawski}}.
Related work includes Opdam~\cite{OpdamTrace}, 
Cherednik and Ostrik~\cite{CherednikOstrik} Section~10,
Cherednik~\cite{CherednikDFT} and 
Cherednik and Ma~\cite{CherednikMaI,CherednikMaII}
relating $p$-adic matrix coefficients theory with nonsymmetric 
Macdonald polynomials and the~DAHA.

Returning to the comparison of the results of this paper, where
bosonic models represent values of Iwahori vectors in the spherical
model, and those of \cite{BBBGIwahori}, where fermionic models
represent the values of Iwahori Whittaker functions.
An important point for us is to show how remarkably similar the two examples 
are, and also to show how they are different. 

We will describe two kinds of \textit{uncolored} bosonic models, whose
partition functions are, respectively, the Hall-Littlewood
$P$- and $R$-polynomials (\cite{MacdonaldBook} Chapter~III).
The uncolored $P$-models are the same as those in
Korff~\cite{KorffVerlinde}. The $R$-models are close
relatives of the $P$-model, but have more favorable locality
properties. The Hall-Littlewood $P$- and $R$-polynomials
differ by a constant, but that constant
(denoted $v_\lambda(t)$ in~\cite{MacdonaldBook}) depends on $\lambda$,
so this is an important distinction.
The $P$- and $R$-models are related to each other by a simple transformation 
of the Boltzmann weights, called \textit{change of basis} in~\cite{BBB}, Section~4. 
This does not affect the R-matrix, so both models use the same R-matrix
(Figures~\ref{fig:rmatrix} and~\ref{fig:genrmatrix}).

\begin{remark}
The $R$-models are so-called because of their relationship
to the Hall-Littlewood $R$-polynomials. They are not to
be confused with the R-matrices, which appear in the Yang-Baxter
equation.
\end{remark}

After we define the uncolored $R$- and $P$-systems, we will
define \textit{colored} variants, which we will apply to describe matrix
coefficients of principal series representations of 
$\GL_r$ over a nonarchimedean local field.


These bosonic models are very close analogs of the fermionic
models in Brubaker, Buciumas, Bump and Gustafsson~\cite{BBBGIwahori}
which represent Iwahori Whittaker functions for $\GL_r(F)$, and
we will emphasize these significant parallels. In particular:

\begin{itemize}
    \item The R-matrices in this paper are \textit{identical} to
    those in \cite{BBBGIwahori} except for \textit{one value}
    (Figures~\ref{fig:rmatrix} and~\ref{fig:genrmatrix}). This
    minor change alters the underlying quantum group from
    $U_{\sqrt{q}}(\widehat{\mathfrak{gl}}(r|1))$ (in~\cite{BBBGIwahori}) to
    $U_{\sqrt{q}}(\widehat{\mathfrak{gl}}(r+1))$ (in this paper).
    \item There is a \textit{local lifting property} that relates
    the weights of the uncolored model to the colored model. This
    property also appears in~\cite{BorodinWheelerColored} (Proposition~2.4.2), 
    where it is called \textit{color-blindness}. In
    \cite{BBBGIwahori} the corresponding fact is Properties~A and~B in Section~8.
    In this paper, see Section~\ref{sec:local_lifting}.
    This is most clear in the case of the $R$-models; for the
    $P$-models, it persists in a local lifting property for the
    column transfer matrices.
    \item The vertices and vertical edges admit a factorization
    into \textit{monochrome vertices}. There are thus two versions
    of the model. The vertices of the colored model
    will be called \textit{fused} weights and are obtained from the monochrome
    vertices by \textit{fusion}. This simplifies the proof of the Yang-Baxter
    equation and the local lifting property. We will use fusion in the
    very definition of the Boltzmann weights for the colored model.
\end{itemize}

The solution to the Yang-Baxter RTT equation for the 
bosonic models was found by Kulish~\cite{KulishNonlinear},
in the context of a nonlinear Schr\"odinger difference equation.
It was realized independently by Macfarlane~\cite{MacfarlaneHarmonic} and
Biedenharn~\cite{BiedenharnBoson} that the $q$-harmonic oscillator is
related to the quantum group $U_{\sqrt{q}}({\mathfrak{sl}}_2)$. A Verma module
for this quantum group has a ladder structure like the quantum mechanical
harmonic oscillator, and the different energy levels may be regarded
as states with different numbers of bosons. Hall-Littlewood polynomials
then appeared in Tsilevich~\cite{TsilevichBoson} and Korff~\cite{KorffVerlinde},
who used the explicit bijection between states and tableaux. See
also \cite{BogoliubovIzerginKitanineBoson, BogoliubovBullough, BogoliubovBulloughPangHopping} for related work on bosonic models.

\begin{remark}
The models in this paper are the special case of Borodin
and Wheeler~\cite{BorodinWheelerColored} (1.2.2) with the spin parameter $s=0$.
\end{remark}

For solvable lattice models, there is a paradigm that the edges of the
model each correspond to a module for a quantum group. (For the models
in this paper and~\cite{BBBGIwahori}, this paradigm applies to the
fused versions of the models.) The partition functions in~\cite{KorffVerlinde}
are made with the $U_{\sqrt{q}}(\widehat{\mathfrak{sl}}_2)$ Verma module, and
they are the symmetric Hall-Littlewood polynomials. However in
\cite{BorodinWheelerColored}, colored models based on 
$U_{\sqrt{q}}(\widehat{\mathfrak{sl}}_{r+1})$
Verma modules were introduced, and the resulting partition functions are
nonsymmetric Hall-Littlewood (Macdonald) functions. 
In contrast with the more general models of Borodin and Wheeler (with $s=0$)
the weights in this paper are Verma modules with respect to a maximal parabolic
subgroup whose radical is abelian (before quantization). This may account for the
monochrome factorization property.

The Iwahori spherical functions are special cases of {\textit{matrix
coefficients}} which are functions of the form 
$\langle \pi_{\mathbf{z}} (g) \phi, \psi \rangle$
where $\phi \in I (\mathbf{z})$ and $\psi \in I (-\mathbf{z})$, 
where the pairing $\left\langle \;, \; \right\rangle$ comes
from the fact that $I (-\mathbf{z})$ is the contragredient representation of
$I (\mathbf{z})$. In this paper we are taking $\psi$ to be the $K$-fixed
vector in the contragredient module and $\phi$ to be $J$-fixed. It would
be good to have lattice models whose partition functions are arbitrary
(smooth) matrix coefficients of an arbitrary irreducible representation
of $G$, which would be a full recasting of the representation theory
of $G$. A more modest goal that seems in reach would be to represent
the matrix coefficients for the principal series representations
in which both $\phi$ and $\psi$ are Iwahori-fixed vectors. The results
of this paper, and the
philosophy of Iwahori-Metaplectic duality explained in~\cite{BBBGDuality}
suggests that such models would be bosonic models similar to the fermionic 
models in~\cite{BBBGMetahori} (see also~\cite{ABW}) but for the quantum
group $U_{\sqrt{q}}(\widehat{\mathfrak{sl}}_{2r})$ instead of the super quantum
group $U_{\sqrt{q}}(\widehat{\mathfrak{sl}}(r|n))$. We hope to consider such
models in a later paper.

\medbreak\noindent
\textbf{Acknowledgements:}
We would like to thank Amol Aggarwal, Alexei Borodin, Ben Brubaker, Valentin Buciumas, 
Henrik Gustafsson, Andy Hardt and Christian Korff for helpful conversations about 
bosonic models and the subject of this paper. We are grateful to Valentin Buciumas
and Christian Korff for helpful comments on an earlier draft. We thank
the referees for their careful reading. 

\section{Demazure operators}\label{sec:demazure}
Nothing in this section is new. In~\cite{Demazure}, Demazure introduced operators 
to study line bundles over Schubert varieties,
showed that they satisfy the relations of a degenerate Hecke algebra,
and as a biproduct, obtained a new character formula. In this section we will 
collect the properties we need.

We begin by recalling some properties of Demazure operators. Let $\hat{G}$
be a reductive complex algebraic group, and let $\hat{T}$ be
its maximal torus. Let $\Lambda = X^{\ast} (\hat{T})$ be the
weight lattice of $\hat{G}$, and let $\Phi \subset \Lambda$ be
the root system, with Weyl group $W$ acting on $\Lambda$. 
We partition $\Phi$ into positive and negative roots
$\Phi^{\pm}$, and will denote by $\alpha_1, \cdots, \alpha_r$ the simple
positive roots, and by $s_i$ the simple reflections. An element $\lambda\in\Lambda$
is called \textit{dominant} if $\langle\alpha,\lambda\rangle\geqslant 0$
for positive roots $\alpha$ with respect to a fixed $W$-invariant inner
product $\langle\,,\,\rangle$. Let $w_0$ be the longest
element of $W$. Let $\mathcal{O} (\hat{T})$ be the ring of polynomial
functions on $\hat{T}$. If $\lambda \in \Lambda$ and $\mathbf{z} \in
\hat{T}$ we will denote by $\mathbf{z}^{\lambda}$ the application of
$\lambda$ to $\mathbf{z}$. The functions $\mathbf{z}^{\lambda}$ generate
the ring $\mathcal{O} (\hat{T})$, which is thus isomorphic to the group
algebra of $\Lambda$. Ultimately we will be interested in the case where
$\hat{T}$ is the diagonal torus in $\GL_{r + 1} (\mathbb{C})$ and $\Phi$
is the Type~A root system, but in this section, we may work more generally.

If $f \in \mathcal{O} (\hat{T})$ we will denote
\[ \partial_i f (\mathbf{z}) = \frac{f (\mathbf{z}) -\mathbf{z}^{-
   \alpha_i} f (s_i \mathbf{z})}{1 -\mathbf{z}^{- \alpha_i}}, \]
and
\[ \partial^{\circ}_i f (\mathbf{z}) = \frac{f (\mathbf{z}) - f (s_i
   \mathbf{z})}{\mathbf{z}^{\alpha_i} - 1} . \]
These divided difference operators do not introduce denominators, since the
numerators vanishes when $\mathbf{z}^{\alpha_i} = 1$. It is easy to check
that
\begin{equation}
  \label{demspl} \partial_i = \partial_i^{\circ} + 1.
\end{equation}
\begin{proposition}
  The operators $\partial_i$ and $\partial_i^{\circ}$ satisfy the braid
  relations for the Weyl group $W$. Consequently if $w = s_{i_1} \cdots
  s_{i_k}$ is a reduced expression we may define
  \[ \partial_w f = \partial_{i_1} \cdots \partial_{i_k} f, \qquad
     \partial^{\circ}_w f = \partial^{\circ}_{i_1} \cdots
     \partial^{\circ}_{i_k} f. \]
\end{proposition}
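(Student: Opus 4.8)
The plan is to verify the braid relations directly for the operators $\partial_i^\circ$, and then deduce them for $\partial_i = \partial_i^\circ + 1$ using the standard fact that if a family of operators $\{\partial_i^\circ\}$ satisfies the braid relations \emph{and} the quadratic relation $(\partial_i^\circ)^2 = -\partial_i^\circ$, then $\{\partial_i^\circ + 1\}$ automatically satisfies the braid relations as well. (Both families are the images of the Hecke generators under different specializations, $q=0$ and $q=1$; more elementarily, one checks that $\partial_i^\circ$ and $\partial_i^\circ+1$ generate the same two-step flag of relations under the nil-Hecke / group-algebra presentations.) So the real content is the braid relation for the $\partial_i^\circ$, and for that I would use the geometric interpretation rather than brute-force rational-function manipulation: $\partial_i^\circ$ acts on $\mathcal{O}(\hat T)$ through the $W$-action on $\hat T$, and it suffices to check the relations inside the rank-$2$ parabolic subsystems generated by two simple reflections $s_i, s_j$, since a braid relation only involves two indices.

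The key steps, in order: First, record the elementary identities $(\partial_i^\circ)^2 = -\partial_i^\circ$ (equivalently $\partial_i^2 = \partial_i$) and $s_i \circ \partial_i^\circ = \partial_i^\circ \circ s_i = -\partial_i^\circ$ acting on functions; these follow from $\partial_i^\circ f$ being $s_i$-anti-invariant divided by an $s_i$-anti-invariant, hence $s_i$-invariant, combined with a direct two-line computation. Second, reduce the braid relation $\underbrace{\partial_i^\circ \partial_j^\circ \cdots}_{m_{ij}} = \underbrace{\partial_j^\circ \partial_i^\circ \cdots}_{m_{ij}}$ to the case of a rank-$2$ root system ($A_1\times A_1$, $A_2$, $B_2$, or $G_2$) by restricting attention to the subgroup $\langle s_i,s_j\rangle$ and the corresponding subtorus quotient; the operators only see the roots $\alpha_i,\alpha_j$. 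Third, in each rank-$2$ case, verify the identity. For $A_1 \times A_1$ this is just commutativity, which is immediate since $s_i$ and $s_j$ act on disjoint coordinates. For $A_2$ the relation $\partial_i^\circ\partial_j^\circ\partial_i^\circ = \partial_j^\circ\partial_i^\circ\partial_j^\circ$ is the classical identity for Demazure operators, provable by evaluating both sides on the spanning monomials $\mathbf z^\lambda$ and recognizing both as the Demazure character operator $\partial_{w_0}^\circ$ for the longest element of $S_3$, which is independent of the reduced word by a direct check on this single group. Fourth, having the braid relations for $\partial_i^\circ$ and the quadratic relation, conclude the braid relations for $\partial_i$; then the well-definedness of $\partial_w$ and $\partial_w^\circ$ for any reduced word follows from Matsumoto's/Tits' theorem that any two reduced words for $w$ are connected by braid moves.

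The main obstacle is organizational rather than deep: making the rank-$2$ reduction precise (one must be slightly careful that restricting to the parabolic does not lose information, i.e., that the operators $\partial_i^\circ,\partial_j^\circ$ genuinely act through the quotient by the subtorus on which $\alpha_i,\alpha_j$ both vanish, so that the rank-$2$ verification implies the general one), and handling the $B_2$ and $G_2$ cases if one wants the statement in full generality rather than just Type $A$. In the Type $A$ setting that the paper ultimately needs, only $A_1\times A_1$ and $A_2$ arise, so the proof is short; I would either restrict to that case outright (as the paper says it "may work more generally" but is "ultimately interested" in Type $A$) or cite the standard reference for Demazure operators, e.g.\ the fact that $\partial_w^\circ$ is well-defined is due to Demazure and is treated in, say, \cite{MacdonaldBook} or Bernstein--Gelfand--Gelfand; the quadratic-relation-plus-braid argument transferring this to $\partial_w$ is then a one-line deduction.
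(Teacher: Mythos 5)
Your plan is a genuinely different route from the paper: the paper does not verify anything directly, it simply cites Chapter~25 of {\cite{BumpLie}} (Propositions~25.1 and~25.3 for the braid relations of $\partial_i^{\circ}$ and $\partial_i$, and Theorem~25.2, Matsumoto's theorem, for the well-definedness of $\partial_w,\partial_w^{\circ}$). Your skeleton (quadratic relations, rank-two reduction, Matsumoto at the end) is sound, but your first step contains a concrete error. The function $\partial_i^{\circ}f$ is \emph{not} $s_i$-invariant, and $s_i\circ\partial_i^{\circ}\neq-\partial_i^{\circ}$: the denominator $\mathbf{z}^{\alpha_i}-1$ is not $s_i$-anti-invariant, since $s_i(\mathbf{z}^{\alpha_i}-1)=\mathbf{z}^{-\alpha_i}-1=-\mathbf{z}^{-\alpha_i}(\mathbf{z}^{\alpha_i}-1)$, and one finds $s_i\circ\partial_i^{\circ}=\mathbf{z}^{\alpha_i}\,\partial_i^{\circ}$ (multiplication operator), while only $\partial_i^{\circ}\circ s_i=-\partial_i^{\circ}$ holds; it is $\partial_i f$ that is $s_i$-invariant, i.e.\ $s_i\partial_i=\partial_i$, the identity the paper itself uses later. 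The relations you actually need, $(\partial_i^{\circ})^2=-\partial_i^{\circ}$ and $\partial_i^2=\partial_i$, are still true (e.g.\ from $s_i\circ\partial_i^{\circ}=\mathbf{z}^{\alpha_i}\partial_i^{\circ}$, or from the fact that $\partial_i$ fixes $s_i$-invariant functions), so this is repairable, but as written the supporting identities are wrong.

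The second gap is the transfer step: ``braid relations plus $(\partial_i^{\circ})^2=-\partial_i^{\circ}$ imply braid relations for $\partial_i^{\circ}+1$'' is true but is not a tautology, and your parenthetical justification is off — both families are $q=0$ objects (the two standard generating families of the $0$-Hecke algebra), not $q=0$ versus $q=1$ specializations; at $q=1$ the Demazure--Lusztig operator degenerates to $s_i$, and $\partial_i$ is idempotent, not an involution. To make the transfer honest you should either invoke the fact that $T_i\mapsto -(T_i+1)$ is an endomorphism of the $0$-Hecke algebra (e.g.\ by specializing the Hecke-algebra automorphism $T_i\mapsto -qT_i^{-1}=-T_i+(q-1)$, which is defined over $\mathbb{Z}[q]$, at $q=0$), or simply repeat the rank-two verification for $\partial_i$, which is no harder than for $\partial_i^{\circ}$. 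Finally, the proposition is stated for a general reductive $\hat G$, so the $B_2$ and $G_2$ dihedral cases cannot be omitted; your proposed fallback — restrict to Type~$A$ or cite a standard reference for Demazure operators — is legitimate, but in the second form it collapses to exactly the paper's own proof, which is a citation.
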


Thus if $D_i$ denotes either $\partial_i$ or $\partial_i^{\circ}$, and if $1
\leqslant i, j \leqslant r$, then the braid relation asserts that
\[ D_i D_j D_i \cdots \; = \; D_j D_i D_j \cdots \]
where the number of terms on both sides is the order of $s_i s_j$ in $W$.

\begin{proof}
  This is proved in Chapter~25 of~{\cite{BumpLie}}, where $\partial_i^{\circ}$
  is denoted $D_i$. For $\partial_i^{\circ}$, this is Proposition~25.1, while
  for $\partial_i$ it is Proposition~25.3. (There is a typo, where the wrong
  font $D_i$ is used instead of $\partial_i$ for the statement of this
  proposition.) The implication that $\partial_w$ and $\partial_w^{\circ}$ are
  well-defined as a consequence of the braid relations is ``Matsumoto's
  theorem,'' Theorem~25.2 in~{\cite{BumpLie}}.
\end{proof}

Let $\rho = \frac{1}{2}\sum_{\alpha \in \Phi^+}\alpha$ be the Weyl vector.
If $f \in \mathcal{O} (\hat{T})$, define
\[ \Omega (f) = \prod_{\alpha \in \Phi^+} (1 -\mathbf{z}^{- \alpha})^{- 1}
   \mathbf{z}^{- \rho} \sum_{w \in W} (- 1)^{\ell (w)} w
   (\mathbf{z}^{\rho} f) . \]
If $\lambda\in\Lambda$ is dominant, then by the Weyl character formula 
$\Omega (\mathbf{z}^{\lambda}) =
\chi_{\lambda} (\mathbf{z})$, where $\chi_{\lambda}
(\mathbf{z})$ is the character of the irreducible representation of $\hat{G}$ with
highest weight $\lambda$. In the Type $A$ case, if $\lambda$ is a partition
(which is a dominant weight) this is just the Schur
polynomial~$s_{\lambda} (\mathbf{z})$.

The next result makes use of the strong Bruhat order $\leqslant$ on $W$.

\begin{proposition}
  \label{prop:dembru}Let $w \in W$. Then
  \[ \partial_w = \sum_{y \leqslant w} \partial_y^{\circ} . \]
\end{proposition}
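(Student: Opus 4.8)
The plan is to induct on $\ell(w)$, using the recursion $\partial_w=\partial_i\partial_v$ valid whenever $w=s_iv$ with $\ell(w)=\ell(v)+1$ (prepending $s_i$ to a reduced word for $v$ gives one for $w$), and likewise $\partial_w^\circ=\partial_i^\circ\partial_v^\circ$; well-definedness of both is guaranteed by the braid relations proved above. The base case $w=e$ is immediate, since $\partial_e$ and $\partial_e^\circ$ are both the identity and $e$ is the unique element $\leqslant e$.

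First I would record a purely algebraic fact. From the elementary identity $\partial_i^2=\partial_i$ --- one checks directly that $\partial_i f$ is $s_i$-invariant, whence $\partial_i(\partial_i f)=\partial_i f$ --- or equivalently, via (\ref{demspl}), from $(\partial_i^\circ)^2=-\partial_i^\circ$, one deduces for every $y\in W$ that
\[
  \partial_i\,\partial_y^\circ \;=\;
  \begin{cases}
    \partial_y^\circ+\partial_{s_iy}^\circ, & s_iy>y,\\[2pt]
    0, & s_iy<y,
  \end{cases}
\]
where in the first case $\partial_{s_iy}^\circ=\partial_i^\circ\partial_y^\circ$, while in the second case one writes $y=s_iy'$ with $\ell(y')=\ell(y)-1$ so that $\partial_i\partial_y^\circ=\partial_i\partial_i^\circ\partial_{y'}^\circ=(\partial_i^2-\partial_i)\partial_{y'}^\circ=0$. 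Applying this term by term to the inductive hypothesis $\partial_v=\sum_{z\leqslant v}\partial_z^\circ$, with $w=s_iv$ and $\ell(w)=\ell(v)+1$, yields
\[
  \partial_w=\partial_i\partial_v=\sum_{\substack{z\leqslant v\\ s_iz>z}}\bigl(\partial_z^\circ+\partial_{s_iz}^\circ\bigr);
\]
note that the ``miracle'' that every coefficient in the final formula equals exactly $1$ comes precisely from the vanishing of the $s_iz<z$ terms here.

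The remaining --- and only substantive --- step is combinatorial: writing $s=s_i$ and $v=sw$ (so $sw<w$), one must identify the index set above with $\{z':z'\leqslant w\}$, i.e.\ prove
\[
  \{z'\in W:\ z'\leqslant w\}\;=\;\{z:\ sz>z,\ z\leqslant v\}\ \sqcup\ \{sz:\ sz>z,\ z\leqslant v\},
\]
the two parts being distinguished by whether $s$ is a left descent. This is exactly the lifting property of the Bruhat order: for $z'$ with $sz'>z'$ one has $z'\leqslant w\iff z'\leqslant sw=v$, and for $z'$ with $sz'<z'$ one has $z'\leqslant w\iff sz'\leqslant sw=v$ (the $(\Leftarrow)$ direction of the latter also using that $sw<w$ makes the interval $[e,w]$ stable under left multiplication by $s$). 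Substituting $z'=z$, respectively $z'=sz$, turns the displayed sum into $\sum_{z'\leqslant w}\partial_{z'}^\circ$, completing the induction. I expect this Bruhat-order bookkeeping to be the only real obstacle; everything else is formal.
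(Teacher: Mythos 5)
Your proof is correct. The paper itself does not argue this proposition at all: it simply cites Theorem~2.1 of the reference \cite{demice}, so your induction supplies a self-contained argument where the paper defers to the literature. Your route is the standard one and all the steps check out: the idempotence $\partial_i^2=\partial_i$ (equivalently $(\partial_i^\circ)^2=-\partial_i^\circ$ via (\ref{demspl})) gives exactly the dichotomy $\partial_i\partial_y^\circ=\partial_y^\circ+\partial_{s_iy}^\circ$ when $s_iy>y$ and $\partial_i\partial_y^\circ=0$ when $s_iy<y$; combined with $\partial_w=\partial_i\partial_v$ for $w=s_iv$, $\ell(w)=\ell(v)+1$ (well defined thanks to the braid relations), the inductive hypothesis collapses to a sum with all coefficients equal to $1$, and the remaining identification of index sets,
\[
  \{z'\leqslant w\}=\{z\leqslant sw:\ sz>z\}\sqcup\{sz:\ z\leqslant sw,\ sz>z\},
\]
is precisely the lifting property of the Bruhat order (for $sw<w$: if $su>u$ then $u\leqslant w\iff u\leqslant sw$, and if $su<u$ then $u\leqslant w\iff su\leqslant sw$), which you invoke correctly, including the stability of $[e,w]$ under left multiplication by $s$ for the reverse implication. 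The only thing your argument relies on beyond the paper's stated ingredients is this lifting lemma, which is standard (e.g.\ Bj\"orner--Brenti, Proposition~2.2.7) and worth citing explicitly; with that reference added, your proof is a complete substitute for the citation the paper gives, at the modest cost of redoing an argument that the cited source already contains in essentially the same form.
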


\begin{proof}
  See~{\cite{demice}}, Theorem~2.1.
\end{proof}

\begin{proposition}[Demazure]
  \label{prop:demomega}
  We have
  \[ \Omega = \partial_{w_0}, \qquad \Omega = \sum_{w \in W}
     \partial_w^{\circ} . \]
\end{proposition}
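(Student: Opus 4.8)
The plan is to prove both identities by combining the two previous propositions with a well-known direct computation of the action of the full divided-difference operator $\partial_{w_0}$. First I would establish that $\Omega = \partial_{w_0}$ by a direct argument: both sides are operators on $\mathcal{O}(\hat T)$, so it suffices to compare them on the basis $\{\mathbf{z}^\lambda\}$. Writing out $\partial_{w_0}$ in a reduced expression and using the standard fact (Chapter~25 of~\cite{BumpLie}) that the composite $\partial_{w_0}$ has the closed form
\[
\partial_{w_0} f = \frac{\sum_{w\in W}(-1)^{\ell(w)} w\bigl(\mathbf{z}^{\rho} f\bigr)}{\mathbf{z}^{\rho}\prod_{\alpha\in\Phi^+}(1-\mathbf{z}^{-\alpha})},
\]
one sees immediately that this is literally the definition of $\Omega(f)$. (Alternatively, one checks $\partial_i \Omega = \Omega$ for all $i$, which forces $\Omega$ to factor through $\partial_{w_0}$, and then matches normalizations on $\mathbf{z}^0 = 1$.)

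Next, the second identity $\Omega = \sum_{w\in W}\partial_w^\circ$ follows by feeding the first identity into Proposition~\ref{prop:dembru}. That proposition, applied with $w = w_0$, gives
\[
\partial_{w_0} = \sum_{y\leqslant w_0}\partial_y^\circ = \sum_{y\in W}\partial_y^\circ,
\]
since $w_0$ is the maximum element of the strong Bruhat order, so every $y\in W$ satisfies $y\leqslant w_0$. Combining with $\Omega = \partial_{w_0}$ yields $\Omega = \sum_{w\in W}\partial_w^\circ$, as claimed.

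The only genuine content is the first step, the identification $\Omega = \partial_{w_0}$; I expect the main obstacle there to be bookkeeping the Weyl-denominator normalization correctly — in particular verifying that the alternating sum $\sum_{w}(-1)^{\ell(w)} w(\mathbf{z}^\rho f)$ produced by iterating the $\partial_i$ matches the numerator in the definition of $\Omega$ with the right sign conventions, and that no spurious poles appear (which is guaranteed because each $\partial_i$ preserves $\mathcal{O}(\hat T)$). Once the normalization is pinned down on the constant function, equality on all of $\mathcal{O}(\hat T)$ is automatic by linearity. Everything else is a formal consequence of results already stated.
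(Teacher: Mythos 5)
Your proposal is correct and follows essentially the same route as the paper: the identity $\Omega=\partial_{w_0}$ is exactly Theorem~25.3 of~\cite{BumpLie} (which the paper simply cites), and the second identity is obtained, as you do, from Proposition~\ref{prop:dembru} with $w=w_0$ using that $w_0$ is the maximum of the Bruhat order. Nothing further is needed.
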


\begin{proof}
  By Theorem~25.3 in {\cite{BumpLie}} we have $\Omega = \partial_{w_0}$.
  The identity $\Omega = \sum_{w \in W} \partial_w^{\circ}$ follows from
  Proposition~\ref{prop:dembru} with $w = w_0$.
\end{proof}

\begin{lemma}
  \label{lem:omegacases}We have
  \[ \label{eq:lemcasa} \Omega (\mathbf{z}^{w (\lambda + \rho) - \rho}) = (-
     1)^{\ell (w)} \Omega (\mathbf{z}^{\lambda}) . \]
  As a particular case
  \[ \label{eq:lemcasb} \Omega (\mathbf{z}^{w_0 \lambda - 2 \rho}) = (-
     1)^{\ell (w_0)} \Omega (\mathbf{z}^{\lambda}) . \]
\end{lemma}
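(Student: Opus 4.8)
The plan is to reduce the statement to the elementary antisymmetry of the Weyl alternator. For $\nu \in \Lambda$ write $J_\nu = \sum_{u \in W} (-1)^{\ell(u)} \mathbf{z}^{u\nu}$, where $u$ acts on $\mathcal{O}(\hat T)$ in the standard way, so that $u(\mathbf{z}^\rho\mathbf{z}^\mu) = \mathbf{z}^{u(\mu+\rho)}$. Unwinding the definition of $\Omega$ on a monomial $\mathbf{z}^\mu$ then gives
\[ \Omega(\mathbf{z}^\mu) = \mathbf{z}^{-\rho}\prod_{\alpha\in\Phi^+}(1-\mathbf{z}^{-\alpha})^{-1}\, J_{\mu+\rho}. \]
The prefactor $\mathbf{z}^{-\rho}\prod_{\alpha\in\Phi^+}(1-\mathbf{z}^{-\alpha})^{-1}$ does not depend on $\mu$, so the entire content of the lemma lies in how $J_\nu$ transforms when $\nu$ is replaced by $v\nu$ for $v \in W$.

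First I would record the identity $J_{v\nu} = (-1)^{\ell(v)} J_\nu$ for every $v \in W$. This is immediate by reindexing the sum via $u \mapsto uv^{-1}$: since $u \mapsto (-1)^{\ell(u)}$ is a group homomorphism and $\ell(v^{-1}) = \ell(v)$,
\[ J_{v\nu} = \sum_{u\in W}(-1)^{\ell(u)}\mathbf{z}^{(uv)\nu} = \sum_{u'\in W}(-1)^{\ell(u'v^{-1})}\mathbf{z}^{u'\nu} = (-1)^{\ell(v)}J_\nu. \]
Applying this with $\nu = \lambda + \rho$ and $v = w$, and then multiplying through by the $\mu$-independent prefactor, gives
\[ \Omega(\mathbf{z}^{w(\lambda+\rho)-\rho}) = \mathbf{z}^{-\rho}\prod_{\alpha\in\Phi^+}(1-\mathbf{z}^{-\alpha})^{-1}\, J_{w(\lambda+\rho)} = (-1)^{\ell(w)}\Omega(\mathbf{z}^\lambda), \]
which is the first assertion. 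The particular case then follows by specializing $w = w_0$ and using $w_0\rho = -\rho$, so that $w_0(\lambda+\rho)-\rho = w_0\lambda + w_0\rho - \rho = w_0\lambda - 2\rho$.

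There is no substantive obstacle here: the only points requiring care are the bookkeeping of the $W$-action on $\mathcal{O}(\hat T)$ (which is what makes $u(\mathbf{z}^\rho\mathbf{z}^\mu) = \mathbf{z}^{u(\mu+\rho)}$ correct, and which is also why the displayed identity is insensitive to whether one reads the action as $u(\mathbf{z}^\nu)=\mathbf{z}^{u\nu}$ or $\mathbf{z}^{u^{-1}\nu}$, since one may reindex by $u\mapsto u^{-1}$), and the observation that $\mathbf{z}^{-\rho}\prod_{\alpha\in\Phi^+}(1-\mathbf{z}^{-\alpha})^{-1}$ is an overall multiplier that passes through the computation unchanged. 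An alternative route would be to invoke $\Omega = \sum_{w\in W}\partial_w^\circ$ from Proposition~\ref{prop:demomega} together with the vanishing of $\partial_i^\circ$ on $s_i$-invariant functions, but the alternator computation above is shorter and more transparent, so that is the approach I would present.
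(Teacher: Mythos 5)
Your proof is correct and is essentially the paper's own argument: the paper rewrites $\Omega(\mathbf{z}^\lambda)$ as the prefactor times the alternating sum $\sum_{w\in W}(-1)^{\ell(w)}\mathbf{z}^{w\bullet\lambda}$ (dot action) and reindexes, which is exactly your shifted alternator $J_{\lambda+\rho}$ together with its antisymmetry $J_{v\nu}=(-1)^{\ell(v)}J_\nu$. The specialization via $w_0\rho=-\rho$ for the second identity also matches.
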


\begin{proof}
  We will use the dot action of the Weyl group on $\Lambda$:
  \[ w \bullet \lambda = w (\lambda + \rho) - \rho . \]
  We may then write
  \[ \Omega (\mathbf{z}^{\lambda}) = \prod_{\alpha \in \Phi^+} (1
     -\mathbf{z}^{- \alpha})^{- 1} \sum_{w \in W} (- 1)^{\ell (w)}
     \mathbf{z}^{w \bullet \lambda} \]
  and from this the statement follows from a change of variables, using the
  fact that $\bullet$ is a group action.
\end{proof}

Now let $q$ be a parameter, which can be an element of $\mathbb{C}^{\times}$
or an indeterminant. If $\lambda\in\Lambda$ is a dominant weight, define
\begin{equation}
\label{eq:rpoldef}
R_{\lambda} (\mathbf{z}; q) = \Omega \!\left( \prod_{\alpha \in \Phi^{_+}}
   (1 - q\mathbf{z}^{- \alpha}) \mathbf{z}^{\lambda} \right) .
\end{equation}
It is easy to show that
\begin{equation}
\label{eq:rpolequiv}
R_{\lambda} (\mathbf{z}; q) = \sum_{w \in W} w \!\left( \prod_{\alpha \in
   \Phi^+} \frac{1 - q\mathbf{z}^{- \alpha}}{1 -\mathbf{z}^{- \alpha}}
   \mathbf{z}^{\lambda} \right) . 
\end{equation}   
In the Type A case, if $\lambda$ is a partition, these are the Hall-Littlewood 
$R$-polynomials (Macdonald~{\cite{MacdonaldBook}}, Chapter~3). We will avoid
using the notation $R_\lambda$ if $\lambda$ is not dominant, even though expressions
such as the right hand side of (\ref{eq:rpoldef}) will arise. A special case
is given by the following result.
\begin{proposition}
  \label{prop:rlamns}If $\lambda$ is dominant, then
  \[ \Omega \left( \prod_{\alpha \in \Phi^{_+}} (1 - q\mathbf{z}^{- \alpha})
     \mathbf{z}^{w_0 \lambda} \right) = q^{| \Phi^+ |} R_{\lambda}
     (\mathbf{z}; q^{- 1}) . \]
\end{proposition}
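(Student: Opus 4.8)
The plan is to deduce this from the $W$-equivariance of $\Omega$ encoded in Lemma~\ref{lem:omegacases}, applied with $w=w_0$. First I would record the following reformulation of that lemma: for every $f\in\mathcal{O}(\hat{T})$ and every $w\in W$,
\[
\Omega\bigl(\mathbf{z}^{-\rho}\,w(\mathbf{z}^{\rho}f)\bigr)=(-1)^{\ell(w)}\,\Omega(f).
\]
This follows by linearity, since for a monomial $f=\mathbf{z}^{\mu}$ we have $\mathbf{z}^{-\rho}w(\mathbf{z}^{\rho}\mathbf{z}^{\mu})=\mathbf{z}^{w(\mu+\rho)-\rho}$, and Lemma~\ref{lem:omegacases} identifies $\Omega(\mathbf{z}^{w(\mu+\rho)-\rho})$ with $(-1)^{\ell(w)}\Omega(\mathbf{z}^{\mu})$. (The only mild care needed is that Lemma~\ref{lem:omegacases} is stated for monomials, so this extension by linearity should be made explicit.)

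Next I would apply this identity with $w=w_0$ to $f=\prod_{\alpha\in\Phi^{+}}(1-q\mathbf{z}^{-\alpha})\,\mathbf{z}^{w_0\lambda}$ and compute $\mathbf{z}^{-\rho}w_0(\mathbf{z}^{\rho}f)$ explicitly. Using $w_0\rho=-\rho$, $w_0^{2}=1$, and the fact that $w_0$ carries $\Phi^{+}$ to $\Phi^{-}$ (so that $\{-w_0\alpha:\alpha\in\Phi^{+}\}=\Phi^{+}$), one gets $w_0\bigl(\prod_{\alpha\in\Phi^{+}}(1-q\mathbf{z}^{-\alpha})\bigr)=\prod_{\beta\in\Phi^{+}}(1-q\mathbf{z}^{\beta})$ and $w_0(\mathbf{z}^{w_0\lambda+\rho})=\mathbf{z}^{\lambda-\rho}$, hence
\[
\mathbf{z}^{-\rho}w_0(\mathbf{z}^{\rho}f)=\Bigl(\prod_{\beta\in\Phi^{+}}(1-q\mathbf{z}^{\beta})\Bigr)\mathbf{z}^{\lambda-2\rho}.
\]
Then I would invoke the elementary identity $1-q\mathbf{z}^{\beta}=-q\mathbf{z}^{\beta}(1-q^{-1}\mathbf{z}^{-\beta})$ together with $\sum_{\beta\in\Phi^{+}}\beta=2\rho$ to rewrite $\prod_{\beta\in\Phi^{+}}(1-q\mathbf{z}^{\beta})=(-q)^{|\Phi^{+}|}\mathbf{z}^{2\rho}\prod_{\beta\in\Phi^{+}}(1-q^{-1}\mathbf{z}^{-\beta})$; the factor $\mathbf{z}^{2\rho}$ then cancels $\mathbf{z}^{-2\rho}$, leaving
\[
\mathbf{z}^{-\rho}w_0(\mathbf{z}^{\rho}f)=(-q)^{|\Phi^{+}|}\Bigl(\prod_{\beta\in\Phi^{+}}(1-q^{-1}\mathbf{z}^{-\beta})\Bigr)\mathbf{z}^{\lambda}.
\]

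Finally I would apply $\Omega$ to both sides. By the equivariance above and $\ell(w_0)=|\Phi^{+}|$, the left-hand side equals $(-1)^{|\Phi^{+}|}\Omega(f)$; by the definition~(\ref{eq:rpoldef}) of $R_{\lambda}$ with $q$ replaced by $q^{-1}$ (legitimate since $\lambda$ is dominant), the right-hand side equals $(-q)^{|\Phi^{+}|}R_{\lambda}(\mathbf{z};q^{-1})$. Cancelling the common sign $(-1)^{|\Phi^{+}|}$ gives $\Omega(f)=q^{|\Phi^{+}|}R_{\lambda}(\mathbf{z};q^{-1})$, which is exactly the assertion. There is no conceptual obstacle here; the work is entirely in tracking the sign $(-1)^{\ell(w_0)}$, the power of $q$, and the $\mathbf{z}^{2\rho}$-bookkeeping coming from the $\rho$-shifts. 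As an alternative one could run the same computation starting from the symmetrized form~(\ref{eq:rpolequiv}), re-indexing the Weyl sum by $w\mapsto ww_0$ and again using $w_0\Phi^{+}=\Phi^{-}$.
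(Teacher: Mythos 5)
Your proof is correct, and it is essentially the paper's argument: both rest on Lemma~\ref{lem:omegacases} with $w=w_0$ together with the fact that $-w_0$ permutes $\Phi^+$, and both track the same sign $(-1)^{\ell(w_0)}$ and power of $q$. The only difference is presentational: the paper expands the product over subsets of $\Phi^+$, passes to complements, and applies the lemma monomial by monomial after re-indexing by $U=-w_0T$, whereas you apply the (linearly extended) lemma once to the whole product and absorb the complementation into the identity $1-q\mathbf{z}^{\beta}=-q\mathbf{z}^{\beta}(1-q^{-1}\mathbf{z}^{-\beta})$ --- a slightly more streamlined packaging of the same computation.
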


\begin{proof}
  Expanding the product gives the sum over subsets of $\Phi^+$:
  \[ \sum_{S \subseteq \Phi^+} (- q)^{| S |} \Omega \left( \mathbf{z}^{w_0
     \lambda - \sum_{\alpha \in S} \alpha} \right) . \]
  Replacing $S$ by $T = \Phi^+ - S$ this equals
  \[ (- q)^{| \Phi^+ |} \sum_{T \subseteq \Phi^+} (- q)^{- | T |} \Omega
     \left( \mathbf{z}^{w_0 \lambda - 2 \rho + \sum_{\alpha \in T} \alpha}
     \right). \]
  Let $U = - w_0 T$. Then
  \[ \sum_{\alpha \in T} \alpha = - w_0 \left( \sum_{\alpha \in U} \alpha
     \right) . \]
  The map $T \mapsto - w_0 T$ permutes the subsets of $\Phi^+$ and using 
  Lemma~\ref{lem:omegacases} we obtain
  \[ (- q)^{| \Phi^+ |} \sum_{U \subseteq \Phi^+} (- q)^{- | U |} \Omega
     \left( \mathbf{z}^{w_0 \left( \lambda - \sum_{\alpha \in U} \alpha
     \right) - 2 \rho} \right) = q^{| \Phi^+ |} \sum_{U \subseteq \Phi^+} (-
     q)^{- | U |} \Omega \left( \mathbf{z}^{\lambda - \sum_{\alpha \in U}
     \alpha} \right) . \]
  This equals
  \[ q^{| \Phi^+ |} \Omega \left( \prod_{\alpha \in \Phi^+} (1 - q^{- 1}
     \mathbf{z}^{- \alpha}) \mathbf{z}^{\lambda} \right) = q^{| \Phi^+ |}
     R_{\lambda} (\mathbf{z}; q^{- 1}) . \qedhere \]
  
\end{proof}

Let $\mathcal{O}_q(\hat{T})=\mathcal{O}(\hat{T})[q,q^{-1}]$. If we regard $q$
as an indeterminate, this is a Laurent polynomial ring in $q$ over $\mathcal{O}(\hat{T})$. 
For $f \in \mathcal{O}_q(\hat{T})$, the Demazure-Lusztig operators are defined by
\begin{equation}
\label{eq:dldef}
\mathcal{L}_{i, q} f (\mathbf{z}) =\mathcal{L}_{i, q}^{\mathbf{z}} f
   (\mathbf{z}) = \frac{f (\mathbf{z}) - f (s_i
   \mathbf{z})}{\mathbf{z}^{\alpha_i} - 1} - q \frac{f (\mathbf{z})
   -\mathbf{z}^{\alpha_i} f (s_i \mathbf{z})}{\mathbf{z}^{\alpha_i} - 1} . 
\end{equation}
They map $\mathcal{O}(\hat{T})$ into $\mathcal{O}_q(\hat{T})$.
They satisfy the quadratic relations 
\[\mathcal{L}_{i, q}^2 = (q - 1) \mathcal{L}_{i, q} + q.\]
The inverse operator equals
\begin{equation}
\label{eq:dlinverse}
\mathcal{L}_{i,q}^{- 1} f (\mathbf{z}) = \left( \frac{f (\mathbf{z}) - f
   (s_i \mathbf{z})}{\mathbf{z}^{- \alpha_i}-1} \right) - q^{- 1} \left(
   \frac{f (\mathbf{z}) -\mathbf{z}^{- \alpha_i} f (s_i \mathbf{z})}
   {\mathbf{z}^{- \alpha_i}-1} \right).
\end{equation}
The $\mathcal{L}_i$ satisfy the braid relations by
Lusztig~{\cite{LusztigEquivariant}} equation (5.2), proved in Section~8.
Consequently if $w = s_{i_1} \cdots s_{i_k}$ is a reduced expression for $w
\in W$, we may write
\[ \mathcal{L}_{w, q} =\mathcal{L}^{\mathbf{z}}_{w, q} =\mathcal{L}_{i_1, q}
   \cdots \mathcal{L}_{i_k, q}, \]
and this is well-defined.

\begin{lemma}\label{lem:demluszforms}
We have
\begin{equation}
\label{eq:lidemazure} 
\mathcal{L}_{i,q}+1=q(\mathcal{L}_{i,q}^{-1}+1)=\partial_i(1-q\mathbf{z}^{-\alpha_i}).
\end{equation}
\end{lemma}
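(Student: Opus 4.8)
The plan is to verify the two equalities separately, each by a short direct manipulation of the definitions in (\ref{demspl}), (\ref{eq:dldef}) and (\ref{eq:dlinverse}).

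For the first equality $\mathcal{L}_{i,q}+1=q(\mathcal{L}_{i,q}^{-1}+1)$, the cleanest route is to use the quadratic relation $\mathcal{L}_{i,q}^2=(q-1)\mathcal{L}_{i,q}+q$: multiplying on either side by $\mathcal{L}_{i,q}^{-1}$ gives $\mathcal{L}_{i,q}=(q-1)+q\,\mathcal{L}_{i,q}^{-1}$, which rearranges to $\mathcal{L}_{i,q}+1=q(\mathcal{L}_{i,q}^{-1}+1)$. Alternatively one can see this by hand from (\ref{eq:dlinverse}): writing both $\mathcal{L}_{i,q}f$ and $\mathcal{L}_{i,q}^{-1}f$ over the common denominator $\mathbf{z}^{\alpha_i}-1$ (clearing $\mathbf{z}^{-\alpha_i}$ by multiplying numerator and denominator by $\mathbf{z}^{\alpha_i}$) and collecting the coefficients of $f(\mathbf{z})$ and $f(s_i\mathbf{z})$ shows that $q(\mathcal{L}_{i,q}^{-1}f+f)$ and $\mathcal{L}_{i,q}f+f$ are the same rational expression.

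For the second equality I would expand the operator product as $\partial_i(1-q\mathbf{z}^{-\alpha_i})=\partial_i-q\,\partial_i\circ(\mathbf{z}^{-\alpha_i}\cdot\,)$, where $\mathbf{z}^{-\alpha_i}\cdot$ denotes multiplication by the function $\mathbf{z}^{-\alpha_i}$. The key computation is to evaluate $\partial_i$ on the function $\mathbf{z}^{-\alpha_i}f(\mathbf{z})$; since $s_i$ sends $\mathbf{z}^{-\alpha_i}$ to $\mathbf{z}^{\alpha_i}$, the definition of $\partial_i$ gives
\[
\partial_i\bigl(\mathbf{z}^{-\alpha_i}f(\mathbf{z})\bigr)=\frac{\mathbf{z}^{-\alpha_i}f(\mathbf{z})-\mathbf{z}^{-\alpha_i}\mathbf{z}^{\alpha_i}f(s_i\mathbf{z})}{1-\mathbf{z}^{-\alpha_i}}=\frac{f(\mathbf{z})-\mathbf{z}^{\alpha_i}f(s_i\mathbf{z})}{\mathbf{z}^{\alpha_i}-1},
\]
after clearing $\mathbf{z}^{-\alpha_i}$ from numerator and denominator. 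This is exactly the term multiplied by $-q$ in (\ref{eq:dldef}). Combining this with $\partial_i=\partial_i^{\circ}+1$ from (\ref{demspl}) and the definition of $\partial_i^{\circ}$, which supplies the remaining term of (\ref{eq:dldef}), yields $\partial_i(1-q\mathbf{z}^{-\alpha_i})f=\partial_i^{\circ}f+f-q\,\partial_i(\mathbf{z}^{-\alpha_i}f)=\mathcal{L}_{i,q}f+f$, as desired.

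The argument is entirely elementary; the only point requiring care is bookkeeping the $s_i$-action — in particular, in the operator product $\partial_i\circ(\mathbf{z}^{-\alpha_i}\cdot\,)$ the reflection $s_i$ is applied after multiplying by $\mathbf{z}^{-\alpha_i}$, so it acts on that factor as well and turns $\mathbf{z}^{-\alpha_i}$ into $\mathbf{z}^{\alpha_i}$. There is no substantive obstacle.
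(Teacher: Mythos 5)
Your proposal is correct: the computation of $\partial_i\bigl(\mathbf{z}^{-\alpha_i}f\bigr)$ is right (the key point that $s_i$ turns $\mathbf{z}^{-\alpha_i}$ into $\mathbf{z}^{\alpha_i}$ is handled properly), and together with $\partial_i=\partial_i^{\circ}+1$ it does reproduce $\mathcal{L}_{i,q}f+f$; the first equality also follows as you say. Your route differs slightly in organization from the paper's: the paper proves both equalities at once by writing each of the three operators in the single normal form $(\mathbf{z}^{\alpha_i}-1)^{-1}(\mathbf{z}^{\alpha_i}-s_i-q+q\mathbf{z}^{\alpha_i}s_i)$, i.e.\ as an element of the group algebra of $\langle s_i\rangle$ with rational-function coefficients, whereas you treat the two equalities separately and dispatch the first via the quadratic relation $\mathcal{L}_{i,q}^2=(q-1)\mathcal{L}_{i,q}+q$. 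That shortcut is legitimate here since the quadratic relation and the formula (\ref{eq:dlinverse}) for $\mathcal{L}_{i,q}^{-1}$ are both stated just before the lemma, though note it takes those statements as inputs; the paper's uniform normal-form computation has the minor advantage of verifying all three expressions (including the inverse formula's consistency) in one pass, while your pairwise argument is arguably more transparent about where each term of (\ref{eq:dldef}) comes from. Either way, no gap.
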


\begin{proof}
Each operator in (\ref{eq:lidemazure}) may be computed to equal
\[(\mathbf{z}^{\alpha_i}-1)^{-1}(\mathbf{z}^{\alpha_i}-s_i-q+q\mathbf{z}^{\alpha_i}s_i).
\qedhere\]
\end{proof}

\begin{proposition}
  \label{prop:sphericaldem}Let $f \in \mathcal{O}(\hat{T})$. Then
  \[ \sum_{w \in W} \mathcal{L}_{w, q} f = \Omega \left( \prod_{\alpha \in \Phi^+} (1 -
     q\mathbf{z}^{- \alpha}) f \right) . \]
  In particular, if $\lambda$ is dominant,
  \[ \sum_{w \in W} \mathcal{L}_{w, q} \mathbf{z}^\lambda =
    R_\lambda(\mathbf{z}; q).\]
  \end{proposition}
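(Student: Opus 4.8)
The plan is to prove the operator identity $\sum_{w\in W}\mathcal{L}_{w,q}=\Omega\circ P_q$, where $P_q$ denotes multiplication by $\prod_{\alpha\in\Phi^+}(1-q\mathbf{z}^{-\alpha})$; the formula for $R_\lambda$ then follows at once from~(\ref{eq:rpoldef}). I would work inside the twisted group algebra $\mathcal{F}[W]$ of operators $f\mapsto\sum_{w\in W}c_w\,w(f)$ with $c_w$ in the fraction field $\mathcal{F}$ of $\mathcal{O}_q(\hat{T})$. Both sides lie in $\mathcal{F}[W]$ --- for $\Omega$ this is clear from its defining formula, which is a composition of multiplication operators with $\sum_w(-1)^{\ell(w)}w$ --- and since the automorphisms $w\in W$ of $\mathcal{F}$ are linearly independent as additive operators, an element of $\mathcal{F}[W]$ is determined by its coefficient functions $(c_w)$.

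The first step is a direct rewriting from~(\ref{eq:dldef}): in $\mathcal{F}[W]$ one has $\mathcal{L}_{i,q}=\frac{1-q}{\mathbf{z}^{\alpha_i}-1}\cdot 1+\frac{q\mathbf{z}^{\alpha_i}-1}{\mathbf{z}^{\alpha_i}-1}\cdot s_i$, equivalently $s_i=\frac{\mathbf{z}^{\alpha_i}-1}{q\mathbf{z}^{\alpha_i}-1}\mathcal{L}_{i,q}-\frac{1-q}{q\mathbf{z}^{\alpha_i}-1}$. The second step is to show that both sides of the desired identity are left-fixed by every $s_i$, i.e.\ $s_i\circ X=X$. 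For $X=\Omega\circ P_q$ this holds because $\Omega(f)$ is always a $W$-invariant function: in the formula defining $\Omega$, the numerator $\sum_w(-1)^{\ell(w)}w(\mathbf{z}^\rho f)$ and the Weyl denominator $\mathbf{z}^\rho\prod_{\alpha\in\Phi^+}(1-\mathbf{z}^{-\alpha})$ are both $W$-anti-invariant, so their quotient $\Omega(f)$ is $W$-invariant. For $X=\sum_w\mathcal{L}_{w,q}$ I would pair $w$ with $s_iw$ and combine the formula for $s_i$ above with the Hecke multiplication rules $\mathcal{L}_{i,q}\mathcal{L}_{w,q}=\mathcal{L}_{s_iw,q}$ when $s_iw>w$ and $\mathcal{L}_{i,q}\mathcal{L}_{w,q}=(q-1)\mathcal{L}_{w,q}+q\mathcal{L}_{s_iw,q}$ when $s_iw<w$; a short calculation gives $s_i\mathcal{L}_{w,q}+s_i\mathcal{L}_{s_iw,q}=\mathcal{L}_{w,q}+\mathcal{L}_{s_iw,q}$ for each such pair, and summing over pairs yields $s_i\circ\sum_w\mathcal{L}_{w,q}=\sum_w\mathcal{L}_{w,q}$.

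The third step is the structural observation that if $X=\sum_w c_w\,w\in\mathcal{F}[W]$ satisfies $s_i\circ X=X$ for all $i$, then matching coefficients forces $c_w=w(c_1)$ for every $w$; hence such an $X$ is pinned down by its identity-coefficient $c_1$. It remains to compute $c_1$ for each side. From the defining formula for $\Omega$, the identity-coefficient of $\Omega\circ P_q$ is $\prod_{\alpha\in\Phi^+}\frac{1-q\mathbf{z}^{-\alpha}}{1-\mathbf{z}^{-\alpha}}$. For $\sum_w\mathcal{L}_{w,q}$ I would read off the $w_0$-coefficient and then apply $w_0$: since $\mathcal{L}_{i,q}$ involves only the group elements $1$ and $s_i$, the operator $\mathcal{L}_{w,q}$ has $W$-components only among $\{y\leqslant w\}$, so only $\mathcal{L}_{w_0,q}$ contributes to the $w_0$-component; expanding $\mathcal{L}_{w_0,q}$ along a reduced word $w_0=s_{i_1}\cdots s_{i_N}$, the single term with group part $w_0$ has coefficient $\prod_{j=1}^{N}(s_{i_1}\cdots s_{i_{j-1}})\bigl(\frac{q\mathbf{z}^{\alpha_{i_j}}-1}{\mathbf{z}^{\alpha_{i_j}}-1}\bigr)=\prod_{\alpha\in\Phi^+}\frac{q\mathbf{z}^{\alpha}-1}{\mathbf{z}^{\alpha}-1}$, because the roots $s_{i_1}\cdots s_{i_{j-1}}(\alpha_{i_j})$ enumerate $\Phi^+$; since $w_0$ maps $\Phi^+$ onto $-\Phi^+$, applying $w_0$ gives $c_1=\prod_{\alpha\in\Phi^+}\frac{1-q\mathbf{z}^{-\alpha}}{1-\mathbf{z}^{-\alpha}}$, matching $\Omega\circ P_q$. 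This proves the operator identity, and evaluation at $\mathbf{z}^\lambda$ for $\lambda$ dominant gives $R_\lambda$.

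The main obstacle I anticipate is the twisted-group-algebra bookkeeping --- justifying the linear independence of $W$ as operators and correctly commuting multiplication operators past elements of $W$ --- together with the elementary but fiddly pairing identity $s_i\mathcal{L}_{w,q}+s_i\mathcal{L}_{s_iw,q}=\mathcal{L}_{w,q}+\mathcal{L}_{s_iw,q}$. A tempting shortcut is to compare both sides applied to the constant function $1$, where the left side equals $\sum_w q^{\ell(w)}$ (from $\mathcal{L}_{i,q}(1)=q$, hence $\mathcal{L}_{w,q}(1)=q^{\ell(w)}$); but matching would then require the classical evaluation $\Omega\bigl(\prod_{\alpha\in\Phi^+}(1-q\mathbf{z}^{-\alpha})\bigr)=\sum_w q^{\ell(w)}$, so comparing coefficients in $\mathcal{F}[W]$ as above is preferable as it is self-contained.
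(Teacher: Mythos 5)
Your proposal is correct and follows essentially the same route as the paper: both prove the operator identity by showing each side is fixed under left multiplication by every $s_i$, using the resulting relation $c_{yw}=y(c_w)$ to reduce to matching a single coefficient, and then computing the extreme ($w_0$-)coefficient of $\sum_w\mathcal{L}_{w,q}$ via a reduced word for $w_0$ and the enumeration of $\Phi^+$ by the roots $s_{i_1}\cdots s_{i_{j-1}}(\alpha_{i_j})$. The only (minor) variation is the invariance step, where the paper writes $\Theta=(1+\mathcal{L}_{i,q})\sum_{s_iw>w}\mathcal{L}_{w,q}$ and uses $1+\mathcal{L}_{i,q}=\partial_i(1-q\mathbf{z}^{-\alpha_i})$ with $s_i\partial_i=\partial_i$, whereas you derive the same fact from the Hecke quadratic relation and the expression of $s_i$ in terms of $\mathcal{L}_{i,q}$; both calculations are valid.
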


\begin{proof}
  See~{\cite{BrubakerBumpFriedbergMatrix}}, Theorem~14 for a statement that
  includes this fact. For completeness, we give a proof here.
  Let $\Theta$ be the operator
  \begin{equation}
    \label{eq:Thetadef} \Theta = \sum_{w \in W} \mathcal{L}_{w, q}.
  \end{equation}
  First we argue that if $w \in W$ then $w \Theta = \Theta$. It is enough to
  check this when $w = s_i$ is a simple reflection. We write
  \[ \Theta = (1 +\mathcal{L}_{i, q}) 
  \sum_{\substack{ w \in W\\ s_i w > w }} 
  \mathcal{L}_{w, q} = \partial_i (1 - q\mathbf{z}^{- \alpha_i}) 
  \sum_{\substack{ w \in W\\ s_i w > w }} 
  \mathcal{L}_{w, q}, \]
  and since $s_i \partial_i = \partial_i$ (as is easily checked) the statement
  follows. On the other hand let
  \[ \Theta' = \Omega \prod_{\alpha \in \Phi^+} (1 - q\mathbf{z}^{- \alpha}) .  \]
  We wish to show $\Theta = \Theta'$. Since $w \Omega = \Omega$ we also have $w
  \Theta' = \Theta'$.

  We may expand $\Theta$ and $\Theta'$ and write
  \[ \Theta = \sum_{w \in W} h_{w, q} (\mathbf{z}) \, w, \qquad \Theta' =
     \sum_{w \in W} h_{w, q}' (\mathbf{z}) \, w. \]
  If $y \in W$ then since $y \Theta = \Theta$ we must have
  \[ y (h_{w, q} (\mathbf{z})) = h_{y w, q} (\mathbf{z}), \]
  and similarly for $h'_{w, q}$. Thus to show $\Theta = \Theta'$ we need only
  show $h_{w, q} = h'_{w, q}$ for one $w$. We will prove this when $w = w_0$.
  
  Only $\mathcal{L}_{w_0, q}$ can contribute to the coefficient of $w_0$. Let
  $w_0 = s_{i_1} \cdots s_{i_N}$ be a reduced expression. We write
  \[ \mathcal{L}_{w_0, q} =\mathcal{L}_{i_1, q} \cdots \mathcal{L}_{i_N, q} =
     \prod_{j = 1}^N \left( \left( \frac{1 - q}{\mathbf{z}^{\alpha_{i_j}} - 1}
     \right) + \frac{1 - q\mathbf{z}^{\alpha_{i_j}}}{1
     -\mathbf{z}^{\alpha_{i_j}}} s_{i_j} \right) . \]
  In multiplying this out, the only way to get $w_0$ is to take the term
  involving $s_{i_j}$ from each factor and therefore
  \[ h_{w_0, q} (\mathbf{z}) = \left(  \frac{1 -
     q\mathbf{z}^{\alpha_{i_1}}}{1 -\mathbf{z}^{\alpha_{i_1}}} s_{i_1}
     \right) \cdots \left(  \frac{1 - q\mathbf{z}^{\alpha_{i_N}}}{1
     -\mathbf{z}^{\alpha_{i_N}}} s_{i_N} \right) . \]
  Moving the $s_{i_j}$ to the right,
  \[ \prod_{j = 1}^N \left(  \frac{1 - q\mathbf{z}^{\beta_1}}{1
     -\mathbf{z}^{\beta_1}} \right) \cdots \left( \frac{1 -
     q\mathbf{z}^{\beta_N}}{1 -\mathbf{z}^{\beta_N}} \right) s_{i_1} \cdots
     s_{i_N}, \]
  where $\beta_1 = \alpha_{i_1}$, $\beta_2 = s_{i_1} (\alpha_{i_2})$, $\beta_3 =
  s_{i_1} s_{i_2} (\alpha_{i_3}), \cdots$. It is well-known that $\beta_1,
  \cdots, \beta_N$ is an enumeration of the positive roots ({\cite{BumpLie}}
  Proposition 20.10), so
  \[ h_{w_0, q} (\mathbf{z}) = \prod_{\alpha \in \Phi^+} \left( \frac{1 -
     q\mathbf{z}^{\alpha}}{1 -\mathbf{z}^{\alpha}} \right) = w_0 \left(
     \prod_{\alpha \in \Phi^+} \left( \frac{1 - q\mathbf{z}^{- \alpha}}{1
     -\mathbf{z}^{- \alpha}} \right) \right) \]
  which also equals $h'_{w_0, q}$.
\end{proof}

\begin{theorem}
  \label{thm:taurecurse}
  Let $y \in W$ and let $\lambda \in \Lambda$. Then there exists a unique family of
  functions $\tau^{\lambda}_{w, y} (\mathbf{z};q) \in \mathcal{O}_q (\hat{T})$
  indexed by $w \in W$ such that if $s = s_i$ is a simple reflection
  \begin{equation}
    \label{eq:sigmarecurse} \tau^{\lambda}_{s w, y} (\mathbf{z};q) =
    \left\{\begin{array}{ll}
      \mathcal{L}_{i, q} \tau^{\lambda}_{w, y}(\mathbf{z};q) & \text{if $s w > w$,}\\
      \mathcal{L}_{i, q}^{- 1} \tau^{\lambda}_{w, y}(\mathbf{z};q) & \text{if $s w < w$,}
    \end{array}\right.
  \end{equation}
  and such that $\tau^{\lambda}_{y, y}(\mathbf{z};q) = q^{\ell (y)} \mathbf{z}^{\lambda}$. 
  Moreover, we have
  \[
  	\sum_{w \in W}\tau^{\lambda}_{w, y} (\mathbf{z};q) 
    = \Omega\left(\prod_{\alpha \in \Phi^+}(1-qz^{-\alpha})\mathbf{z}^\lambda \right).
  \]
  In particular, if $\lambda$ is dominant, then
  \begin{equation}
    \label{eq:genrform} \sum_{w \in W} \tau^{\lambda}_{w, y} (\mathbf{z};q) =
    R_{\lambda} (\mathbf{z}; q),
  \end{equation}
  and if $\lambda$ is antidominant, then
  \begin{equation}
    \label{eq:dualrform} \sum_{w \in W} \tau^{\lambda}_{w, y} (\mathbf{z};q) =
    q^{|\Phi^+|}R_{w_0\lambda} (\mathbf{z}; q^{-1}) .
  \end{equation}
	The functions $\tau^\lambda_{w,y}$ are given by
	\begin{equation}
		\label{eq:taudef}
	\tau^{\lambda}_{w, y} (\mathbf{z};q) = q^{\ell (y)} \mathcal{L}_{w, q} \mathcal{L}_{y, q}^{- 1} \mathbf{z}^{\lambda}.
	\end{equation}
  Furthermore, we have
  \begin{equation}
  \label{eq:tauinvolution}
  \tau^{\lambda}_{w, y} (\mathbf{z}; q) = q^{| \Phi^+ |} 
  \tau^{-\lambda}_{w w_0, y w_0} (\mathbf{z}^{- 1} ; q^{- 1}) .
  \end{equation}

\end{theorem}

Thus summing over $w$ in (\ref{eq:genrform}) produces \textit{two}
$W$ invariance properties that $\tau^\lambda_{w,y}(\mathbf{z})$ does
not possess: the sum becomes symmetric, invariant under the action of $W$
on $\mathbf{z}$, and it also becomes independent on the Weyl group
element $y$. 

\begin{proof}
	We may define $\tau^\lambda_{w,y}$ by (\ref{eq:taudef})
	and the identity
  (\ref{eq:sigmarecurse}) follows easily. The problem will be to prove
  (\ref{eq:genrform}) and (\ref{eq:dualrform}). Both are cases of the
  identity
  \begin{equation}
  \label{eq:gensumid}
  \sum_{w \in W} \tau^{\lambda}_{w, y} (\mathbf{z};q) = \Omega \left(\prod_{\alpha \in \Phi^+} (1
     - q\mathbf{z}^{- \alpha}) \mathbf{z}^{\lambda} \right),
  \end{equation}
  which we will prove for any $\lambda$. If $\lambda$ is dominant, then
  (\ref{eq:gensumid}) implies (\ref{eq:genrform}) from the definition of
  $R_\lambda$, while if $\lambda$ is antidominant, then it implies
  (\ref{eq:dualrform}) by Proposition~\ref{prop:rlamns}.

  If $y = 1$, then Proposition~\ref{prop:sphericaldem}
  implies (\ref{eq:gensumid}) in this case. To prove the more general case it
  suffices to show that
  \[ q^{\ell (y)} \Theta \mathcal{L}_{y, q}^{- 1} = \Theta \]
  where $\Theta$ is defined by (\ref{eq:Thetadef}). This reduces immediately
  to the case where $y = s_i$ is a simple reflection. Then we have
  \[ q \Theta \mathcal{L}_{i, q}^{- 1} = q \left(
     \sum_{\substack{ w \in W\\ w s_i > w }} 
     \mathcal{L}_{w, q} \right) (1 +\mathcal{L}_{i, q})
     \mathcal{L}_{i,q}^{- 1} = q \left( \sum_{\substack{ w \in W\\ w s_i > w }} 
     \mathcal{L}_{w, q} \right) (1 +\mathcal{L}_{i, q}^{- 1}) .
  \]
  Using (\ref{eq:lidemazure}), this equals
  \[ \left( \sum_{\substack{ w \in W\\ w s_i > w }} 
  \mathcal{L}_{w, q} \right) (1 +\mathcal{L}_{i, q}) = \Theta
     . \]
     
  To prove the involution identity (\ref{eq:tauinvolution}), 
  define by $c:\mathcal{O}(\hat{T})\to\mathcal{O}(\hat{T})$
  the map $cf(\mathbf{z})=f(\mathbf{z}^{-1})$. Comparing
  (\ref{eq:dldef}) and (\ref{eq:dlinverse}) shows that
  \begin{equation}
  \mathcal{L}_{i,q}^{-1}=c\,\mathcal{L}_{i,q^{-1}}\,c.
  \end{equation}
  Using this identity, together with the fact that
  the map $w\mapsto ww_0$ is order-reversing for the Bruhat
  order, it is easy to check that the right-hand side of
  (\ref{eq:tauinvolution}) satisfies the defining properties
  of~$\tau^{\lambda}_{w,y}$.
\end{proof}

We note that functions $\tau_{w,y}^\lambda(\mathbf{z};q)$ are closely related 
to the permuted basement non-symmetric Hall-Littlewood polynomials (as
specialization of the corresponding permuted basement non-symmetric Macdonald
polynomials) introduced in~\cite{FerreiraThesis}. See also
\cite{Al16, GuoRam, CorteelMandelshtamWilliamsQueues, Alexandersson2020}. 

In this paper, we show that the $\tau_{w,y}^\lambda(\mathbf{z};q)$ are equal
to the Iwahori-spherical matrix coefficients in the standard basis of the 
Iwahori-fixed vectors. 

\section{\label{sec:uncolored}The uncolored bosonic models}
We recall some basics about solvable lattice models. A lattice model is a combinatorial system $\mathfrak{S}$ with an ensemble of states
$\mathfrak{s}$. For every state $\mathfrak{s}\in\mathfrak{S}$
there is assigned a \textit{Boltzmann weight} $\beta(\mathfrak{s})$.
The \textit{partition function} is then
\begin{equation}
\label{eq:parfun}
Z(\mathfrak{S})=\sum_{\mathfrak{s}\in\mathfrak{S}}\beta(\mathfrak{s}).
\end{equation}

To specify the system $\mathfrak{S}$ we begin
with a planar array consisting of vertices and edges, such as
an $r\times N$ grid. We will assume that each vertex is adjacent to four edges.
The edges are divided into \textit{boundary edges} at the boundary of the
array, and \textit{interior edges} in the interior. Every interior edge is adjacent to two vertices, and every boundary edge is adjacent to a single vertex. Every edge $E$ is assigned a \textit{spinset} 
$\Sigma_E$ of allowable states. Every boundary edge is assigned a fixed
element of its spinset, which is fixed and is part of the data defining
the system. To specify a state $\mathfrak{s}\in\mathfrak{S}$, we assign
spins to the internal edges, picking for each edge $E$ a spin $s\in\Sigma_E$.
Now as part of the data specifying the system $\mathfrak{S}$, for each
vertex $v$ we assign a set of \textit{local Boltzmann weights},
which is a function from the set of possible spins of its four
adjacent edges to $\mathbb{C}$. Thus given a state $\mathfrak{s}$ every
vertex is assigned a local Boltzmann weight, and the product of these
over all vertices is the Boltzmann weight $\beta(\mathfrak{s})$ appearing
in (\ref{eq:parfun}). We say that a vertex $v$ is admissible if its Boltzmann weight is non-zero. We say that a state $\mathfrak{s}$ is admissible, if all vertices $v \in \mathfrak{s}$ are admissible. 
\begin{figure}
\[\begin{tikzpicture}[scale=.7,every node/.style={scale=.8}]
\draw (0,1)--(18,1);
\draw (0,3)--(18,3);
\draw (0,5)--(18,5);
\draw (0,7)--(18,7);
\draw (0,9)--(18,9);
\draw (1,0)--(1,10);
\draw (3,0)--(3,10);
\draw (5,0)--(5,10);
\draw (7,0)--(7,10);
\draw (9,0)--(9,10);
\draw (11,0)--(11,10);
\draw (13,0)--(13,10);
\draw (15,0)--(15,10);
\draw (17,0)--(17,10);
\path[fill=white] (1,9) circle (.4);
\path[fill=white] (3,9) circle (.4);
\path[fill=white] (5,9) circle (.4);
\path[fill=white] (7,9) circle (.4);
\path[fill=white] (9,9) circle (.4);
\path[fill=white] (11,9) circle (.4);
\path[fill=white] (13,9) circle (.4);
\path[fill=white] (15,9) circle (.4);
\path[fill=white] (17,9) circle (.4);
\node at (1,9) {$z_1$};
\node at (3,9) {$z_1$};
\node at (5,9) {$z_1$};
\node at (7,9) {$z_1$};
\node at (9,9) {$z_1$};
\node at (11,9) {$z_1$};
\node at (13,9) {$z_1$};
\node at (15,9) {$z_1$};
\node at (17,9) {$z_1$};
\path[fill=white] (1,7) circle (.4);
\path[fill=white] (3,7) circle (.4);
\path[fill=white] (5,7) circle (.4);
\path[fill=white] (7,7) circle (.4);
\path[fill=white] (9,7) circle (.4);
\path[fill=white] (11,7) circle (.4);
\path[fill=white] (13,7) circle (.4);
\path[fill=white] (15,7) circle (.4);
\path[fill=white] (17,7) circle (.4);
\node at (1,7) {$z_2$};
\node at (3,7) {$z_2$};
\node at (5,7) {$z_2$};
\node at (7,7) {$z_2$};
\node at (9,7) {$z_2$};
\node at (11,7) {$z_2$};
\node at (13,7) {$z_2$};
\node at (15,7) {$z_2$};
\node at (17,7) {$z_2$};
\path[fill=white] (1,5) circle (.4);
\path[fill=white] (3,5) circle (.4);
\path[fill=white] (5,5) circle (.4);
\path[fill=white] (7,5) circle (.4);
\path[fill=white] (9,5) circle (.4);
\path[fill=white] (11,5) circle (.4);
\path[fill=white] (13,5) circle (.4);
\path[fill=white] (15,5) circle (.4);
\path[fill=white] (17,5) circle (.4);
\node at (1,5) {$z_3$};
\node at (3,5) {$z_3$};
\node at (5,5) {$z_3$};
\node at (7,5) {$z_3$};
\node at (9,5) {$z_3$};
\node at (11,5) {$z_3$};
\node at (13,5) {$z_3$};
\node at (15,5) {$z_3$};
\node at (17,5) {$z_3$};
\path[fill=white] (1,3) circle (.4);
\path[fill=white] (3,3) circle (.4);
\path[fill=white] (5,3) circle (.4);
\path[fill=white] (7,3) circle (.4);
\path[fill=white] (9,3) circle (.4);
\path[fill=white] (11,3) circle (.4);
\path[fill=white] (13,3) circle (.4);
\path[fill=white] (15,3) circle (.4);
\path[fill=white] (17,3) circle (.4);
\node at (1,3) {$z_4$};
\node at (3,3) {$z_4$};
\node at (5,3) {$z_4$};
\node at (7,3) {$z_4$};
\node at (9,3) {$z_4$};
\node at (11,3) {$z_4$};
\node at (13,3) {$z_4$};
\node at (15,3) {$z_4$};
\node at (17,3) {$z_4$};
\path[fill=white] (1,1) circle (.4);
\path[fill=white] (3,1) circle (.4);
\path[fill=white] (5,1) circle (.4);
\path[fill=white] (7,1) circle (.4);
\path[fill=white] (9,1) circle (.4);
\path[fill=white] (11,1) circle (.4);
\path[fill=white] (13,1) circle (.4);
\path[fill=white] (15,1) circle (.4);
\path[fill=white] (17,1) circle (.4);
\node at (1,1) {$z_5$};
\node at (3,1) {$z_5$};
\node at (5,1) {$z_5$};
\node at (7,1) {$z_5$};
\node at (9,1) {$z_5$};
\node at (11,1) {$z_5$};
\node at (13,1) {$z_5$};
\node at (15,1) {$z_5$};
\node at (17,1) {$z_5$};
\draw[fill=white] (0,1) circle (.3);
\node[scale=.9] at (0,1) {$+$};
\draw[fill=white] (0,3) circle (.3);
\node at (0,3) {$+$};
\draw[fill=white] (0,5) circle (.3);
\node at (0,5) {$+$};
\draw[fill=white] (0,7) circle (.3);
\node at (0,7) {$+$};
\draw[fill=white] (0,9) circle (.3);
\node at (0,9) {$+$};
\draw[fill=white] (18,1) circle (.3);
\node[scale=.9] at (18,1) {$-$};
\draw[fill=white] (18,3) circle (.3);
\node at (18,3) {$-$};
\draw[fill=white] (18,5) circle (.3);
\node at (18,5) {$-$};
\draw[fill=white] (18,7) circle (.3);
\node at (18,7) {$-$};
\draw[fill=white] (18,9) circle (.3);
\node at (18,9) {$-$};
\draw[fill=white] (1,10) circle (.3);
\node at (1,10) {$1$};
\draw[fill=white] (3,10) circle (.3);
\node at (3,10) {$0$};
\draw[fill=white] (5,10) circle (.3);
\node at (5,10) {$2$};
\draw[fill=white] (7,10) circle (.3);
\node at (7,10) {$0$};
\draw[fill=white] (9,10) circle (.3);
\node at (9,10) {$0$};
\draw[fill=white] (11,10) circle (.3);
\node at (11,10) {$0$};
\draw[fill=white] (13,10) circle (.3);
\node at (13,10) {$0$};
\draw[fill=white] (15,10) circle (.3);
\node at (15,10) {$1$};
\draw[fill=white] (17,10) circle (.3);
\node at (17,10) {$1$};
\draw[fill=white] (1,0) circle (.3);
\node at (1,0) {$0$};
\draw[fill=white] (3,0) circle (.3);
\node at (3,0) {$0$};
\draw[fill=white] (5,0) circle (.3);
\node at (5,0) {$0$};
\draw[fill=white] (7,0) circle (.3);
\node at (7,0) {$0$};
\draw[fill=white] (9,0) circle (.3);
\node at (9,0) {$0$};
\draw[fill=white] (11,0) circle (.3);
\node at (11,0) {$0$};
\draw[fill=white] (13,0) circle (.3);
\node at (13,0) {$0$};
\draw[fill=white] (15,0) circle (.3);
\node at (15,0) {$0$};
\draw[fill=white] (17,0) circle (.3);
\node at (17,0) {$0$};
\node at (-1.4,11) {column:};
\node at (1,11) {$8$};
\node at (3,11) {$7$};
\node at (5,11) {$6$};
\node at (7,11) {$5$};
\node at (9,11) {$4$};
\node at (11,11) {$3$};
\node at (13,11) {$2$};
\node at (15,11) {$1$};
\node at (17,11) {$0$};
\end{tikzpicture}\]
\caption{The grid with boundary conditions for the uncolored model, 
$\mathfrak{S}^P_\lambda(\mathbf{z};t)$ or $\mathfrak{S}^R_\lambda(\mathbf{z}; t)$,
corresponding to the partition $\lambda=(8,6,6,1,0)$, with $r=5$. A state of the model will assign spins to the interior edges.}
\label{fig:uncolored_model}
\end{figure}

\begin{figure}[tb]
\[\begin{array}{|c|c|c|c|c|}
\hline 
\text{name} & A(n) & B(n) & C(n) & D(n) \\
\hline
& \begin{tikzpicture}[scale=.8,every node/.style={scale=0.85}]
\draw (-1,0) to (1,0);
\draw (0,-1) to (0,1);
\path[fill=white] (0,0) circle (.25);
\draw[fill=white] (1,0) circle (.35);
\draw[fill=white] (-1,0) circle (.35);
\draw[fill=white] (0,1) circle (.35);
\draw[fill=white] (0,-1) circle (.35);
\node at (0,0) {$z_i$};
\node at (-1,0) {$+$};
\node at (1,0) {$+$};
\node at (0,-1) {$n$};
\node at (0,1) {$n$};
\end{tikzpicture} &
\begin{tikzpicture}[scale=.8,every node/.style={scale=0.85}]
\draw (-1,0) to (1,0);
\draw (0,-1) to (0,1);
\path[fill=white] (0,0) circle (.25);
\draw[fill=white] (1,0) circle (.35);
\draw[fill=white] (-1,0) circle (.35);
\draw[fill=white] (0,1) circle (.35);
\draw[fill=white] (0,-1) circle (.35);
\node at (0,0) {$z_i$};
\node at (-1,0) {$-$};
\node at (1,0) {$-$};
\node at (0,-1) {$n$};
\node at (0,1) {$n$};
\end{tikzpicture} &
\begin{tikzpicture}[scale=.8,every node/.style={scale=0.85}]
\draw (-1,0) to (1,0);
\draw (0,-1) to (0,1);
\path[fill=white] (0,0) circle (.35);
\draw[fill=white] (1,0) circle (.35);
\draw[fill=white] (-1,0) circle (.35);
\draw[fill=white] (0,1) circle (.35);
\draw[fill=white] (0,-1) circle (.35);
\node at (0,0) {$z_i$};
\node at (-1,0) {$-$};
\node at (1,0) {$+$};
\node at (0,1) {$n$};
\node at (0,-1) [scale=.75]{$n\!\!+\!\!1$};
\end{tikzpicture} &
\begin{tikzpicture}[scale=.8,every node/.style={scale=0.85}]
\draw (-1,0) to (1,0);
\draw (0,-1) to (0,1);
\path[fill=white] (0,0) circle (.35);
\draw[fill=white] (1,0) circle (.35);
\draw[fill=white] (-1,0) circle (.35);
\draw[fill=white] (0,1) circle (.35);
\draw[fill=white] (0,-1) circle (.35);
\node at (0,0) {$z_i$};
\node at (-1,0) {$+$};
\node at (1,0) {$-$};
\node at (0,1) [scale=.7]{$n\!\!+\!\!1$};
\node at (0,-1) {$n$};
\end{tikzpicture} 
\\
\hline
\text{$P$-weights}&
1 & z_i & z_i(1-t^{n+1}) & 1\\
\hline
\text{$R$-weights}&
1 & z_i & z_i(1-t) & \frac{1-t^{n+1}}{1-t}\\
\hline\end{array}\]
\caption{Uncolored Boltzmann weights of two types: the $P$-weights (which coincide with those in \cite{KorffVerlinde}, and the $R$-weights.}
\label{fig:uncolored_weights}
\end{figure}

\newcommand{\ybelhs}[4]{ \begin{tikzpicture}[baseline=(current bounding box.center)]
  \draw (0,1) to [out = 0, in = 180] (2,3) to (4,3);
  \draw (0,3) to [out = 0, in = 180] (2,1) to (4,1);
  \draw (3,0) to (3,4);
  \draw[fill=white] (0,1) circle (.3);
  \draw[fill=white] (0,3) circle (.3);
  \draw[fill=white] (3,4) circle (.3);
  \draw[fill=white] (4,3) circle (.3);
  \draw[fill=white] (4,1) circle (.3);
  \draw[fill=white] (3,0) circle (.3);
  \node at (0,1) {$a$};
  \node at (0,3) {$b$};
  \node at (3,4) {$c$};
  \node at (4,3) {$d$};
  \node at (4,1) {$e$};
  \node at (3,0) {$f$};
\path[fill=white] (1,2) circle (.3);
\node at (1,2) {$#1$};
\path[fill=white] (3,3) circle (#4);
\node at (3,3) {$#2$};
\path[fill=white] (3,1) circle (#4);
\node at (3,1) {$#3$};
\end{tikzpicture}}

\newcommand{\yberhs}[4]{
\begin{tikzpicture}[baseline=(current bounding box.center)]
  \draw (0,1) to (2,1) to [out = 0, in = 180] (4,3);
  \draw (0,3) to (2,3) to [out = 0, in = 180] (4,1);
  \draw (1,0) to (1,4);
  \draw[fill=white] (0,1) circle (.3);
  \draw[fill=white] (0,3) circle (.3);
  \draw[fill=white] (1,4) circle (.3);
  \draw[fill=white] (4,3) circle (.3);
  \draw[fill=white] (4,1) circle (.3);
  \draw[fill=white] (1,0) circle (.3);
  \node at (0,1) {$a$};
  \node at (0,3) {$b$};
  \node at (1,4) {$c$};
  \node at (4,3) {$d$};
  \node at (4,1) {$e$};
  \node at (1,0) {$f$};
\path[fill=white] (3,2) circle (.4);
\node at (3,2) {$#1$};
\path[fill=white] (1,1) circle (#4);
\node at (1,1) {$#2$};
\path[fill=white] (1,3) circle (#4);
\node at (1,3) {$#3$};
\end{tikzpicture}}

\begin{figure}[h]
\begin{equation}
  \label{ybe}
  \begin{array}{ccc}\ybelhs{z_i,z_j}{z_i}{z_j}{0.35}&\qquad\qquad&
  \yberhs{z_i,z_j}{z_i}{z_j}{0.35}\end{array}
\end{equation}
\caption{The Yang-Baxter equation. 
The partition functions of the two small 3-vertex systems are the same. 
Here $a,b,c,d,e,f$ are the fixed boundary spins, and in each case
we sum over the possible spins of the three interior edges.
We may use either the $P$- or the $R$-weights.
\textbf{Uncolored case:} Use the weights
from Figure~\ref{fig:uncolored_weights} and the R-matrix from Figure~\ref{fig:uncolored_rmatrix}. 
\textbf{Colored case:} Use the colored weights obtained by fusion (Figure~\ref{fig:fusion}) from the monochrome weights (Figure~\ref{fig:monochrome}).}
\label{fig:ybe}
\end{figure}

\begin{figure}[h]
  \[\begin{array}{|c|c|c|}
  \hline
  \begin{tikzpicture}
    \draw[semithick] (-.75,-.75)--(.75,.75);
    \draw[semithick] (-.75,.75)--(.75,-.75);
    \draw[semithick,fill=white] (-.75,-.75) circle (.28);
    \draw[semithick,fill=white] (-.75,.75) circle (.28);
    \draw[semithick,fill=white] (.75,.75) circle (.28);
    \draw[semithick,fill=white] (.75,.-.75) circle (.28);
    \node at (-.75,-.75) {$+$};
    \node at (-.75,.75) {$+$};
    \node at (.75,.75) {$+$};
    \node at (.75,.-.75) {$+$};
    \path[semithick,fill=white] (0,0) circle (.4);
    \node at (0,0) {$z_i,z_j$};
  \end{tikzpicture} &
  \begin{tikzpicture}
    \draw[semithick] (-.75,-.75)--(.75,.75);
    \draw[semithick] (-.75,.75)--(.75,-.75);
    \draw[semithick,fill=white] (-.75,-.75) circle (.28);
    \draw[semithick,fill=white] (-.75,.75) circle (.28);
    \draw[semithick,fill=white] (.75,.75) circle (.28);
    \draw[semithick,fill=white] (.75,.-.75) circle (.28);
    \node at (-.75,-.75) {$-$};
    \node at (-.75,.75) {$-$};
    \node at (.75,.75) {$-$};
    \node at (.75,.-.75) {$-$};
    \path[fill=white] (0,0) circle (.4);
    \node at (0,0) {$z_i,z_j$};
  \end{tikzpicture} &
  \begin{tikzpicture}
    \draw[semithick] (-.75,-.75)--(.75,.75);
    \draw[semithick] (-.75,.75)--(.75,-.75);
    \draw[semithick,fill=white] (-.75,-.75) circle (.28);
    \draw[semithick,fill=white] (-.75,.75) circle (.28);
    \draw[semithick,fill=white] (.75,.75) circle (.28);
    \draw[semithick,fill=white] (.75,.-.75) circle (.28);
    \node at (-.75,-.75) {$-$};
    \node at (-.75,.75) {$+$};
    \node at (.75,.75) {$+$};
    \node at (.75,.-.75) {$-$};
    \path[fill=white] (0,0) circle (.4);
    \node at (0,0) {$z_i,z_j$};
  \end{tikzpicture} 
  \\
  \hline
  z_i-tz_j & z_i-tz_j & (1-t)z_i
  \\
  \hline
  \hline
  \begin{tikzpicture}
    \draw[semithick] (-.75,-.75)--(.75,.75);
    \draw[semithick] (-.75,.75)--(.75,-.75);
    \draw[semithick,fill=white] (-.75,-.75) circle (.28);
    \draw[semithick,fill=white] (-.75,.75) circle (.28);
    \draw[semithick,fill=white] (.75,.75) circle (.28);
    \draw[semithick,fill=white] (.75,.-.75) circle (.28);
    \node at (-.75,-.75) {$+$};
    \node at (-.75,.75) {$-$};
    \node at (.75,.75) {$-$};
    \node at (.75,.-.75) {$+$};
    \path[fill=white] (0,0) circle (.4);
    \node at (0,0) {$z_i,z_j$};
  \end{tikzpicture} &
  \begin{tikzpicture}
    \draw[semithick] (-.75,-.75)--(.75,.75);
    \draw[semithick] (-.75,.75)--(.75,-.75);
    \draw[semithick,fill=white] (-.75,-.75) circle (.28);
    \draw[semithick,fill=white] (-.75,.75) circle (.28);
    \draw[semithick,fill=white] (.75,.75) circle (.28);
    \draw[semithick,fill=white] (.75,.-.75) circle (.28);
    \node at (.75,-.75) {$-$};
    \node at (.75,.75) {$+$};
    \node at (-.75,.75) {$-$};
    \node at (-.75,-.75) {$+$};
    \path[fill=white] (0,0) circle (.4);
    \node at (0,0) {$z_i,z_j$};
  \end{tikzpicture} &
  \begin{tikzpicture}
    \draw[semithick] (-.75,-.75)--(.75,.75);
    \draw[semithick] (-.75,.75)--(.75,-.75);
    \draw[semithick,fill=white] (-.75,-.75) circle (.28);
    \draw[semithick,fill=white] (-.75,.75) circle (.28);
    \draw[semithick,fill=white] (.75,.75) circle (.28);
    \draw[semithick,fill=white] (.75,.-.75) circle (.28);
    \node at (-.75,-.75) {$-$};
    \node at (-.75,.75) {$+$};
    \node at (.75,.75) {$-$};
    \node at (.75,-.75) {$+$};
    \path[fill=white] (0,0) circle (.4);
    \node at (0,0) {$z_i,z_j$};
  \end{tikzpicture} 
  \\
  \hline
  (1-t) z_j & t(z_i-z_j) & z_i-z_j\\
  \hline
  \end{array}\]
  \caption{The uncolored R-matrix. This works for either the uncolored $R$- or $P$-models.} 
  \label{fig:uncolored_rmatrix}
\end{figure}

In this section we introduce the uncolored models,
based on a single boson type $\minus$. The spinset of the horizontal edges is
$\Sigma_{\h}^{\unc} = \{ \plus, \minus \}$, where $\plus$ denotes the absence
of a particle, and $\minus$ denotes the presence. The spinset of the vertical edge is 
\[\Sigma_{\v}^{\unc}=\{\boson{n}|n\in\mathbb{N}\},\qquad
\mathbb{N}= \{ 0, 1, 2, \cdots \},\] 
where $\boson{n}$ represents a state with $n$ identical particles present. 
We use two types of Boltzmann weights, namely $P$- and $R$- weights. 
Both types depend on a parameter $z = z_i \in \mathbb{C}^\times$ and the weights of admissible vertices are described in Figure~\ref{fig:uncolored_weights}. The Boltzmann weights of all other vertices are zero. 

We note that the
$P$-weights are the same is those in Korff~{\cite{KorffVerlinde}} and $R$-weights are the special case of weights in Borodin and Wheeler~\cite{BorodinWheelerColored}, when $n=1$ and $s = 0$. Despite being the special case, $R$-weights have special aspects that warrant giving it a separate treatment. 

We will (for the remainder of the paper) take $\Lambda$ to be
the $\GL_r$ root system, which can be identified with $\mathbb{Z}^r$.
The Weyl group $W$ is then the symmetric group $S_r$. A 
weight $\lambda\in\Lambda$ is dominant if $\lambda_1\geqslant \cdots \geqslant\lambda_r$.
If $\lambda_r\geqslant  0$, then $\lambda$ is just a partition of
length $\leqslant r$. We will at first describe the systems when
$\lambda$ is a partition, then in Remark~\ref{rem:dominantpf} 
discuss the minor modifications required when $\lambda_r$ is allowed 
to be negative.

Let $\lambda = (\lambda_1, \cdots, \lambda_r)$ be a partition. Then we will
describe two uncolored systems $\mathfrak{S}^P_{\lambda} (\mathbf{z};t)$ and
$\mathfrak{S}^R_{\lambda} (\mathbf{z};t)$ where $\mathbf{z}= (z_1, \cdots,
z_r) \in \hat{T} (\mathbb{C}) =\mathbb{C}^r$, which we call the \emph{spectral parameters} for the corresponding model. Each uncolored system has $r$ rows, and $N$ columns, where $N \geqslant \lambda_1$. The rows are labeled
from $1$ to $r$ from top to bottom, and the columns are labeled from $0$ to
$N$ from right to left. The boundary
edges are assigned spins as follows. On the left edge, the horizontal edge of
every row is assigned spin $\plus$, and on the right edge every horizontal edge is
assigned spin $\minus$. On the bottom, the vertical edge of every column is
assigned spin $\boson{0}$, and on the top, the edge in the column labeled $j$ is
assigned spin $\boson{m_j}$ where $m_j$ is the number of parts $\lambda_i$ of
$\lambda$ equal to $j$. See Figure~\ref{fig:uncolored_model} for an
example of these boundary conditions.

A lattice model is called \emph{integrable} or \emph{exactly solvable} if the Boltzmann weights of its vertices satisfy the Yang-Baxter equation, which is a local equation for the weights that gives the relation between the Boltzmann weights of the vertices of different types. To describe the Yang-Baxter equation, we use the Boltzmann weights from
Figure~\ref{fig:uncolored_rmatrix}, which will work both for the $P$- and the
$R$-weights. 

\begin{proposition}[Kulish~\cite{KulishNonlinear}]
The following Yang-Baxter equation is satisfied: given
spins $a,b,d,e\in\Sigma_{\text{h}}$ and $c,f\in\Sigma_{\text{v}}$
the partition functions of the small systems in Figure~\ref{fig:ybe}
are equal.
\end{proposition}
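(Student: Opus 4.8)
The plan is to verify the Yang--Baxter equation by the standard method: both sides of Figure~\ref{fig:ybe} are, for fixed outer spins $a,b,d,e \in \Sigma_{\h}^{\unc}$ and $c,f \in \Sigma_{\v}^{\unc}$, partition functions of a three-vertex system with three interior edges (two horizontal, one vertical) to be summed over. Since the horizontal spinset has only two elements and the R-matrix vertex in Figure~\ref{fig:uncolored_rmatrix} is nonzero only when the two incoming horizontal spins match the two outgoing ones as a multiset, the sum over interior states is finite in each configuration, even though $\Sigma_{\v}^{\unc}$ is infinite. First I would record the selection rules: in each of the $R$- or $P$-vertices of Figure~\ref{fig:uncolored_weights} the number of $\minus$'s on the horizontal pair plus the label on the bottom vertical edge equals the same sum for the top vertical edge (conservation of particle number), and likewise the R-matrix conserves the number of $\minus$'s on the two horizontal strands. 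This immediately forces the vertical interior edge on each side to carry a determined label once $c,f$ and the horizontal interior spins are chosen, and it reduces the whole identity to finitely many cases indexed by $(a,b,d,e)$ together with the difference $c-f$ (or rather the labels $c,f$ themselves, since the weights depend on $n$, not just on $n$ differences).

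Next I would organize the case analysis by the horizontal data. By the conservation law the multiset $\{a,b\}$ equals $\{d,e\}$, so up to the symmetry of the picture there are a few families: $\{a,b\}=\{\plus,\plus\}$, $\{a,b\}=\{\minus,\minus\}$, and $\{a,b\}=\{\plus,\minus\}$ (with $\{d,e\}$ either $\{\plus,\minus\}$ in the ``transmitted'' or ``reflected'' arrangement). In the first two families only one interior horizontal configuration survives on each side, the R-matrix weight is the diagonal entry $z_i - t z_j$, and the identity collapses to a single monomial identity in the vertex weights; this is immediate. The substantive family is $\{a,b\}=\{d,e\}=\{\plus,\minus\}$, where on each side there are two surviving interior configurations (the $\minus$ travels along one of two routes), so each side is a sum of two products of three weights, and one must check the $2\times 2$ identity. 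I would write out both sides using the explicit $R$-weights $1,\ z_i,\ z_i(1-t^{n+1})$ (or the analogous $R$-model entries) together with the R-matrix entries $(1-t)z_i$, $(1-t)z_j$, $t(z_i-z_j)$, $z_i-z_j$, and verify the polynomial identity in $z_i,z_j,t$ and the integer label.

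The main obstacle is bookkeeping rather than depth: one must be careful that the vertical-edge label appearing in a $D(n)$- or $C(n)$-weight is the correct $n$ for that particular vertex in that particular interior configuration (the label can differ between the two vertices stacked on a strand, and between the two sides of the equation), so the factors $(1-t^{n+1})$ must be matched correctly before the algebra simplifies. I would handle this by fixing notation $f=\boson{m}$, $c=\boson{m'}$ for the outer vertical spins and expressing every interior vertical label in terms of $m,m'$ and the chosen horizontal route, then checking the identity separately for the $P$-weights and for the $R$-weights (the latter following from the former by the change-of-basis relation, or by an independent but identical computation). Since these weights are the $s=0$ specialization of the models of Borodin--Wheeler~\cite{BorodinWheelerColored}, and the YBE in that generality is due to Kulish~\cite{KulishNonlinear}, an alternative is simply to cite that result and note the specialization; for completeness I would nonetheless include the short direct verification sketched above.
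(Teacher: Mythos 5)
Your overall route is the same as the paper's: the proposition is established by a direct, finite case-by-case verification of the three-vertex identity (with the attribution to Kulish), and your preliminary observations are sound — the sum over interior spins is finite, and the interior vertical label is forced by conservation once the interior horizontal spins are chosen. The genuine problem is the selection rule you use to organize the cases. The R-matrix does preserve the number of $\minus$'s on the two horizontal strands, but the bosonic vertices do not: the $C(n)$ and $D(n)$ vertices transfer a particle between a horizontal strand and the vertical strand (your stated vertex conservation law is also garbled — the correct one is ``left plus top equals right plus bottom''). The true global constraint is that the number of $\minus$'s among $a,b$ plus the boson number of $c$ equals the number of $\minus$'s among $d,e$ plus that of $f$; it is \emph{not} true that $\{a,b\}=\{d,e\}$, and $c$ need not equal $f$. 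Your enumeration therefore omits every case with $c\neq f$, and these are neither empty nor degenerate. For example, $a=b=\plus$, $d=\minus$, $e=\plus$, $c=\boson{n}$, $f=\boson{n-1}$ is a legitimate boundary datum for which one side of the equation has a single configuration, of $R$-weight $(z_i-tz_j)\frac{1-t^{n}}{1-t}$, while the other side is the two-term sum $(1-t^{n})z_j+\frac{1-t^{n}}{1-t}(z_i-z_j)$; this is precisely where the $n$-dependent factors $z_i(1-t^{n+1})$, $\frac{1-t^{n+1}}{1-t}$ meet the off-diagonal R-matrix entries, and none of it is reached by the case families you list. So, as organized, your verification would not establish the identity for all admissible boundary spins.

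The repair is small: index the cases by the horizontal particle numbers on the left and on the right (each $0$, $1$ or $2$), with the difference of the vertical labels $c,f$ then dictated by conservation, and check each of the resulting finitely many polynomial identities in $z_i,z_j,t$ and the label $n$ — for the $P$- and $R$-weights separately, or for one of them together with the change-of-basis remark, which is also how the paper relates the two. With that corrected bookkeeping your argument coincides with the paper's one-line proof by examination of cases; simply citing Kulish (or the $s=0$ specialization of Borodin--Wheeler), as you propose, is likewise consistent with the paper's attribution.
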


\begin{proof}
This may be checked by examination of the individual cases.
\end{proof}

We will use the notations $\beta_R$ and $\beta_P$ to distinguish 
the two cases. We will use the notations $Z_P$ and $Z_R$ to denote the partition
functions of the various systems with respect to the $P$- and $R$-weights. Later we will also write $\beta_R^{\unc}$ or $\beta_R^{\col}$ to 
distinguish the weights for the uncolored and colored models. 

\begin{proposition}The partition functions 
$Z_R(\mathfrak{S}_\lambda(\mathbf{z};t))$ and
$Z_P(\mathfrak{S}_\lambda(\mathbf{z};t))$ are symmetric polynomials in 
the~$z_i$.
\end{proposition}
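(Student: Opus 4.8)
The plan is to prove the statement by two separate arguments: first that both partition functions are polynomials, and then that each is invariant under the Weyl group $W=S_r$, the latter via the standard ``train'' argument built on the Yang--Baxter equation just recorded. For the polynomiality, note that every local Boltzmann weight in Figure~\ref{fig:uncolored_weights}, in both the $P$- and the $R$-normalization, is a polynomial in $z_i$ of degree at most $1$ and a polynomial in $t$; for the $R$-weight one uses $\frac{1-t^{n+1}}{1-t}=1+t+\cdots+t^{n}$. Moreover the admissible vertices obey a conservation law: reading a horizontal $\minus$ as one particle and a vertical $\boson{n}$ as $n$ particles, and letting particles flow leftward and upward, the particle number is conserved at each vertex. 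Under the boundary conditions of Figure~\ref{fig:uncolored_model} no particle enters from the bottom, exactly one enters from the right in each of the $r$ rows, and none can exit at the left; hence the system carries exactly $r$ particles, every vertical edge has occupation at most $r$, and only finitely many states can have nonzero weight. Therefore $Z_P(\mathfrak{S}_\lambda(\mathbf{z};t))$ and $Z_R(\mathfrak{S}_\lambda(\mathbf{z};t))$ are finite sums of products of the weights, hence polynomials in the $z_i$ (and $t$).

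For the symmetry, introduce the one-row monodromy operator $\mathcal{T}(z)$ on $\mathbb{C}[\Sigma^{\unc}_{\h}]\otimes\bigotimes_{c=0}^{N}\mathbb{C}[\Sigma^{\unc}_{\v}]$, formed from a single row of the vertices of Figure~\ref{fig:uncolored_weights} with spectral parameter $z$, with the horizontal edge as auxiliary space and the vertical edges as quantum space. Contracting the auxiliary space against the fixed left spin $\plus$ and right spin $\minus$ gives an operator $T(z)$ on $\bigotimes_c \mathbb{C}[\Sigma^{\unc}_{\v}]$, and
\[
  Z_{\bullet}\!\left(\mathfrak{S}_\lambda(\mathbf{z};t)\right)=\langle\mathrm{top}|\,T(z_1)T(z_2)\cdots T(z_r)\,|\mathrm{bottom}\rangle,
\]
with $|\mathrm{bottom}\rangle=\bigotimes_c|\boson{0}\rangle$ and $\langle\mathrm{top}|$ dual to $\bigotimes_c|\boson{m_c}\rangle$. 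Propagating the three-vertex Yang--Baxter equation of Figure~\ref{fig:ybe} column by column across the grid yields the $RTT$-relation
\[
  \check R_{12}(z,z')\,\mathcal{T}_1(z)\,\mathcal{T}_2(z')=\mathcal{T}_2(z')\,\mathcal{T}_1(z)\,\check R_{12}(z,z'),
\]
where subscripts $1,2$ denote the two auxiliary copies on which the R-matrix of Figure~\ref{fig:uncolored_rmatrix} acts.

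To finish, contract this identity against $\langle\minus|\otimes\langle\minus|$ on the left and $|\plus\rangle\otimes|\plus\rangle$ on the right of the two auxiliary spaces. Since $\check R_{12}$ preserves the number of $\minus$'s, it acts on each of the one-dimensional subspaces $\mathbb{C}(\plus\otimes\plus)$ and $\mathbb{C}(\minus\otimes\minus)$ by a scalar, and by Figure~\ref{fig:uncolored_rmatrix} both the $(\plus\plus)\!\to\!(\plus\plus)$ and the $(\minus\minus)\!\to\!(\minus\minus)$ weights equal $z-tz'$. Writing $c=z-tz'$ for this common eigenvalue, the contracted relation becomes $c\,T(z)T(z')=c\,T(z')T(z)$, and cancelling the nonzero polynomial $c$ gives $T(z)T(z')=T(z')T(z)$. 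Thus $T(z_1),\dots,T(z_r)$ commute pairwise, so $Z_{\bullet}(\mathfrak{S}_\lambda(\mathbf{z};t))$ is invariant under every permutation of $z_1,\dots,z_r$; together with the first part this shows $Z_P$ and $Z_R$ are symmetric polynomials, and the same reasoning applies verbatim to both weight systems, which share this R-matrix and Yang--Baxter equation. The step I expect to be the only real obstacle is the bookkeeping in this train argument --- verifying that $\check R$ acts diagonally on the two boundary configurations and that the two eigenvalues coincide. That coincidence (both equal to $z-tz'$) is exactly what makes the uniform left and right horizontal boundaries ``transparent'' to the railroaded R-matrix; for generic weights the scalars would differ and the cancellation would fail. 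The column-by-column passage from Yang--Baxter to the $RTT$-relation, and the polynomiality count, are routine.
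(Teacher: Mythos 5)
Your argument is correct and is essentially the paper's own proof: the paper runs the same Baxter ``train'' argument graphically, attaching the R-vertex on the left of rows $i,i+1$ (unique admissible all-$\plus$ configuration, weight $z_i-tz_{i+1}$), pushing it across the grid with the Yang--Baxter equation, and detaching it on the right (unique all-$\minus$ configuration, again weight $z_i-tz_{i+1}$), then cancelling this common nonzero factor to get $Z(\mathbf{z})=Z(s_i\mathbf{z})$ --- exactly the coincidence of boundary eigenvalues you isolate in your RTT contraction, of which your commuting-transfer-matrix formulation is just the algebraic repackaging. Your additional polynomiality/finiteness argument via particle conservation is sound and supplies a point the paper leaves implicit.
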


\begin{proof}
We will denote by $Z(\mathfrak{S}_\lambda(\mathbf{z};t))$ either
$Z_P(\mathfrak{S}_\lambda(\mathbf{z},t))$ or $Z_R(\mathfrak{S}_\lambda(\mathbf{z};t))$,
since both satisfy the Yang-Baxter equation for the same R-matrix, and
the proof is identical for the two cases.
We use the standard ``train argument,'' due to Baxter, and very
similar to the argument in~\cite{hkice}. Let $s_i$ be the simple reflection
that acts on $\mathbf{z}=(z_1,\cdots,z_r)$ by interchanging $z_i$ and $z_{i+1}$.
We attach the R-matrix to the grid between the $i$ and $i+1$ rows, with
boundary conditions as follows:
\[\begin{tikzpicture}[scale=.75,every node/.style={scale=0.85}]
\foreach \j in {1,3,7}
{
\draw[semithick,dashed] (\j,0)--(\j,.25);
\draw[semithick] (\j,.25)--(\j,3.75);
\draw[semithick,dashed] (\j,3.75)--(\j,4);
}
\draw[semithick] (-2,1) to [out=0,in=180] (0,3) to (3.25,3);
\draw[semithick] (-2,3) to [out=0,in=180] (0,1) to (3.25,1);
\foreach \i in {1,3}
{
\draw[semithick,dashed] (3.25,\i)--(5.75,\i);
\draw[semithick] (5.75,\i)--(8,\i);
}
\foreach \i/\j/\k in {1/1/i+1,3/1/i+1,7/1/i+1,1/3/i,3/3/i,7/3/i}
{
\path[fill=white] (\i,\j) circle (.5);
\node at (\i,\j) {$z_{\k}$};
}
\foreach \i/\j/\k in {-2/1/+,-2/3/+,8/1/-,8/3/-}
{
\draw[semithick,fill=white] (\i,\j) circle (.4);
\node at (\i,\j) {$\k$};
}
\path[fill=white] (-1,2) circle (.4);
\node at (-1,2) {$\scriptstyle z_i,z_{i+1}$};
\end{tikzpicture}\]
Note that there is only one configuration for the R-matrix with nonzero
Boltzmann weight, in which all adjacent edges have spin $\plus$. The
Boltzmann weight of this vertex is $z_i-tz_{i+1}$ Therefore
the Boltzmann weight of this configuration is 
\begin{equation}
\label{eq:leftattach}
(z_i-tz_{i+1})Z(\mathfrak{S}_\lambda(\mathbf{z};t))
\end{equation}
Now after using the Yang-Baxter equation $N$ times, we arrive at the
configuration:
\[\begin{tikzpicture}[scale=.75,every node/.style={scale=0.85}]
\foreach \j in {1,3,7}
{
\draw[semithick,dashed] (\j,0)--(\j,.25);
\draw[semithick] (\j,.25)--(\j,3.75);
\draw[semithick,dashed] (\j,3.75)--(\j,4);
}
\foreach \i in {1,3}
{
	\draw[semithick] (0,\i)--(3.25,\i);
	\draw[semithick,dashed] (3.25,\i)--(5.75,\i);
	\draw[semithick] (5.75,\i) to (8,\i);
}
\draw[semithick] (8,3) to [out=0,in=180] (10,1);
\draw[semithick] (8,1) to [out=0,in=180] (10,3);
\foreach \i in {1,3}
{
\draw[semithick,dashed] (3.25,\i)--(5.75,\i);
\draw[semithick] (5.75,\i)--(8,\i);
}
\foreach \i/\j/\k in {1/1/i,3/1/i,7/1/i,1/3/i+1,3/3/i+1,7/3/i+1}
{
\path[fill=white] (\i,\j) circle (.5);
\node at (\i,\j) {$z_{\k}$};
}
\foreach \i/\j/\k in {0/1/+,0/3/+,10/1/-,10/3/-}
{
\draw[semithick,fill=white] (\i,\j) circle (.4);
\node at (\i,\j) {$\k$};
}
\path[fill=white] (9,2) circle (.4);
\node at (9,2) {$\scriptstyle z_i,z_{i+1}$};
\end{tikzpicture}\]
Again there is only one possible configuration for the R-matrix,
in which all adjacent edges have spin $\minus$, and the Boltzmann weight
of this vertex is again $z_i-tz_{i+1}$. Therefore (\ref{eq:leftattach})
equals
\[(z_i-tz_{i+1})Z(\mathfrak{S}_\lambda(s_i\mathbf{z};t))\]
since the $z_i$ and $z_{i+1}$ rows have been switched. Comparing, we
see that $Z(\mathfrak{S}_\lambda(\mathbf{z};t))$ is invariant under
$s_i$, hence under all permutations of the~$\mathbf{z}$.
\end{proof}

In the following result, we show that the partition functions with $P$- and $R$- weights are related to each other by a simple transformation. In Theorem~\ref{thm:uncoloredpf}, we will show that the partition functions with $P$- and $R$-weights are equal to the Hall-Littlewood polynomials $P_\lambda$ and $R_\lambda$, respectively. 
If $m$ is a nonnegative integer, we will use the notation
\[v_m(t)=\prod_{i=1}^m\frac{1-t^i}{1-t}\]
and if $\lambda$ is a partition, we will denote
\begin{equation}
\label{eq:vlamdef}
v_\lambda(t)=\prod_{i\geqslant 0}v_{m_i}(t),
\end{equation}
where $m_i$ is the number of parts of $\lambda$ equal to $i$. These notations
are introduced in~\cite{MacdonaldBook}, Section~3.1 in connection with
the definition of the Hall-Littlewood polynomials.

\begin{proposition}
  \label{prop:uncoloredpf}
  The partition functions with $P$- and $R$- weights differ only by a factor which is independent on $\mathbf{z}$:
  \[\frac{Z_R(\mathfrak{S}_\lambda(\mathbf{z};t))}{Z_P(\mathfrak{S}_\lambda(\mathbf{z};t))}=v_\lambda(t).\]
\end{proposition}

\begin{proof}
This will be proved by comparing the \textit{column transfer matrices},
which we define as follows. Let $0\leqslant j \leqslant N$, and let
$m=m_j$ be the number of entries in $\lambda$ that are equal
to $j$. Let
\[\delta_1,\cdots,\delta_r,
\varepsilon_1,\cdots,\varepsilon_r\in\Sigma_{\h}=\{\plus,\minus\}.\]
Now let us consider the contributions $C^P_m(\delta,\varepsilon)$
$C^R_m(\delta,\varepsilon)$ of the vertices in the $j$-th column
of a state $\mathfrak{s}$ of the model having spins
$\delta_i$ to the left of the vertices and $\varepsilon_i$ to
the right. (With $m$ fixed we may regard $C^P_m$ or $C^R_m$ as
a matrix with indices $\delta,\varepsilon$, and these are the column transfer
matrices.)
These values are zero unless the number of $\varepsilon_i$
that are equal to $\minus$ exceeds the number of $\delta_i$ that
are equal to $\minus$ by exactly $m$. From Figure~\ref{fig:uncolored_weights}
it is easy to see that if this is true, then
\[\frac{C^R_m(\delta,\varepsilon)}{C^P_m(\delta,\varepsilon)}=
\prod_{i=1}^m\frac{1-t^i}{1-t}.\]
Multiplying this identity over all $j$ gives
\[\frac{Z_R(\mathfrak{S}_\lambda(\mathbf{z};t))}{Z_P(\mathfrak{S}_\lambda(\mathbf{z};t))}=\prod_j v_{m_j}(t).\qedhere\]
\end{proof}

The uncolored models satisfy the Yang-Baxter equations, which imply that the partition functions are symmetric in 
the $\mathbf{z}$. These partition functions are polynomials in the variables $\mathbf{z}$ and $t$, homogeneous
of degree $k$ in $z_1,\cdots,z_r$ when $\lambda$ is a partition of $k$. However, this information alone is insufficient 
to evaluate the partition functions. Additional information is needed to compute them, such as the degree of the 
partition function as a polynomial in $t$ and divisibility facts, which were used in a previous study~\cite{hkice}. 
Inductive reasoning, as shown in~\cite{KorffVerlinde} and Izergin's proof~\cite{IzerginDeterminant} of the 
Izergin-Korepin determinant formula, could be used to evaluate the partition functions. However, an extra 
ingredient is needed regardless of the method used. In this study, we present a different approach that works 
for both fermionic and bosonic colored models. We use a local lifting property of the Boltzmann weights to relate 
the uncolored model to the colored model and obtain the evaluation of the partition function. Thus, we provide a 
proof of Theorem~\ref{thm:uncoloredpf} using the Yang-Baxter equation.

\begin{remark}
\label{rem:dominantpf}
We have assumed so far that $\lambda$ is a partition, and that the
grid has columns labeled from $0$ to $N$ where $N$ is any integer
$\geqslant\lambda_1$. However we will eventually want to consider
systems indexed by dominant weights 
$\lambda_1\geqslant\cdots\geqslant\lambda_r$ with $\lambda_r$
possibly negative. Let $N$ and $M$ be any integers such that
$N\geqslant\lambda_1\geqslant\lambda_r\geqslant M$. Let us
consider what happens if we increase $N$ or decrease $M$. 
Let $\mathfrak{S}_{\lambda,M,N}(\mathbf{z};t)$ be the
resulting ($P$- or $R$-) system with $N+M+1$ columns numbered from $M$ to $N$. There is a bijection
between the states of $\mathfrak{S}_{\lambda,M,N}(\mathbf{z};t)$
and $\mathfrak{S}_{\lambda,M,N+1}(\mathbf{z};t)$;
adding a column to the left only adds $A(0)$ in
the left added column and does not change the partition
function. On the other hand, adding a column to the
right adds a $B(0)$ pattern in each row, and multiplies
the partition function by $z_1\cdots z_r$. Therefore
we may define 
\begin{equation}
\label{eq:dominantpf}
Z(\mathfrak{S}_{\lambda,M,N}(\mathbf{z};t))=
(z_1\cdots z_r)^MZ(\mathfrak{S}_{\lambda,M,N}(\mathbf{z};t))
\end{equation}
and this definition is independent of $M$.
\end{remark}

\section{The colored bosonic models}

\begin{figure}[tb]
\[\begin{array}{|c|c|c|c|c|}
\hline
\text{name} & A(n) & B(n) & C(n) & D(n) \\
\hline
& \begin{tikzpicture}[scale=1.2,every node/.style={scale=0.85}]
\draw (-1,0) to (1,0);
\draw (0,-1) to (0,1);
\path[fill=white] (0,0) circle (.32);
\draw[fill=white] (1,0) circle (.30);
\draw[fill=white] (-1,0) circle (.30);
\draw[fill=white] (0,1) circle (.30);
\draw[fill=white] (0,-1) circle (.30);
\node at (0,0) {$z_i,c$};
\node at (-1,0) {$+$};
\node at (1,0) {$+$};
\node at (0,-1) {$n$};
\node at (0,1) {$n$};
\node at (0,1.35) {$\quad$}; 
\end{tikzpicture} &
\begin{tikzpicture}[scale=1.2,every node/.style={scale=0.85}]
\draw (-1,0) to (1,0);
\draw (0,-1) to (0,1);
\path[fill=white] (0,0) circle (.32);
\draw[fill=white] (1,0) circle (.30);
\draw[fill=white] (-1,0) circle (.30);
\draw[fill=white] (0,1) circle (.30);
\draw[fill=white] (0,-1) circle (.30);
\node at (0,0) {$z_i,c$};
\node at (-1,0) {$d$};
\node at (1,0) {$d$};
\node at (0,-1) {$n$};
\node at (0,1) {$n$};
\node at (0,1.35) {$\quad$}; 
\end{tikzpicture} &
\begin{tikzpicture}[scale=1.2,every node/.style={scale=0.85}]
\draw (-1,0) to (1,0);
\draw (0,-1) to (0,1);
\path[fill=white] (0,0) circle (.32);
\draw[fill=white] (1,0) circle (.30);
\draw[fill=white] (-1,0) circle (.30);
\draw[fill=white] (0,1) circle (.30);
\draw[fill=white] (0,-1) circle (.30);
\node at (0,0) {$z_i,c$};
\node at (-1,0) {$c$};
\node at (1,0) {$+$};
\node at (0,1) {$n$};
\node at (0,-1) [scale=.75]{$n\!\!+\!\!1$};
\node at (0,1.35) {$\quad$}; 
\end{tikzpicture} &
\begin{tikzpicture}[scale=1.2,every node/.style={scale=0.85}]
\draw (-1,0) to (1,0);
\draw (0,-1) to (0,1);
\path[fill=white] (0,0) circle (.32);
\draw[fill=white] (1,0) circle (.30);
\draw[fill=white] (-1,0) circle (.30);
\draw[fill=white] (0,1) circle (.30);
\draw[fill=white] (0,-1) circle (.30);
\node at (0,0) {$z_i,c$};
\node at (-1,0) {$+$};
\node at (1,0) {$c$};
\node at (0,1) [scale=.7]{$n\!\!+\!\!1$};
\node at (0,-1) {$n$};
\node at (0,1.35) {$\quad$}; 
\end{tikzpicture} 
\\
\hline
\text{$P$-weights}&
1 & \begin{array}{cc}1 & c<d\\z_i&c=d\\ t^n&c>d\end{array} & z_i(1-t^{n+1}) & 1\\
\hline
\text{$R$-weights}&
1 & \begin{array}{cc}1 & c<d\\z_i&c=d\\ t^n&c>d\end{array} & z_i(1-t) & \frac{1-t^{n+1}}{1-t}\\
\hline\end{array}\]
\caption{Monochrome vertex type $z_i,c$. The vertical edge can carry only the
color $c$. The possible states of the horizontal edges are all colors
and $+$. Possible spins of the vertical edges are labeled by integers $n$,
representing $n$ copies of the boson of color~$c$.}
\label{fig:monochrome}
\end{figure}

\begin{figure}[ht]
\[\begin{array}{|c|c|}\hline
\begin{tikzpicture}
\draw (0,1) to (2,1);
\draw (1,0) to (1,2);
\path[fill=white] (1,1) circle (.35);
\draw[fill=white] (0,1) circle (.3);
\draw[fill=white] (2,1) circle (.3);
\draw[fill=white] (1,0) circle (.3);
\draw[fill=white] (1,2) circle (.3);
\draw[fill=white] (0,1) circle (.3);
\draw[fill=white] (2,1) circle (.3);
\draw[fill=white] (1,0) circle (.3);
\draw[fill=white] (1,2) circle (.3);
\node at (1,1) {$z_i$};
\node at (0,1) {$a$};
\node at (2,1) {$c$};
\node at (1,2) {$b$};
\node at (1,0) {$d$};
\end{tikzpicture}&
\begin{tikzpicture}[scale=1.1,every node/.style={scale=0.85}]
\draw (0,1)--(2,1);
\draw (4,1)--(8,1);
\draw (1,0)--(1,2);
\draw (5,0)--(5,2);
\draw (7,0)--(7,2);
\node at (3,0) {$\cdots$};
\node at (3,1) {$\cdots$};
\node at (3,2) {$\cdots$};
\draw[fill=white] (0,1) circle (.3);
\draw[fill=white] (1,0) circle (.3);
\draw[fill=white] (1,2) circle (.3);
\draw[fill=white] (5,0) circle (.3);
\draw[fill=white] (5,2) circle (.3);
\draw[fill=white] (7,0) circle (.3);
\draw[fill=white] (7,2) circle (.3);
\draw[fill=white] (8,1) circle (.3);
\path[fill=white] (1,1) circle (.4);
\path[fill=white] (5,1) circle (.4);
\path[fill=white] (7,1) circle (.4);
\node at (0,1) {$a$};
\node at (8,1) {$c$};
\node at (1,2) {$b_r$};
\node at (1,0) {$d_r$};
\node at (5,2) {$b_2$};
\node at (5,0) {$d_2$};
\node at (7,2) {$b_1$};
\node at (7,0) {$d_1$};
\node at (1,1) {$z_i,\gamma_r$};
\node at (5,1) {$z_i,\gamma_2$};
\node at (7,1) {$z_i,\gamma_1$};
\end{tikzpicture}
\\\hline\end{array}\]
\caption{Fusion. 
The colors $\gamma_i$ are ordered so that 
$\gamma_r < \gamma_{r-1} < \cdots < \gamma_1$, so they are arranged 
in increasing order from left to right.
This procedure replaces a sequence of vertices by a single vertex. 
Here $z_i,\gamma_i$ is the ``monochrome'' vertex from 
Figure~\ref{fig:monochrome}. 
The single fused vertex $z_i$ replaces the $r$ unfused vertices 
$z_i,\gamma_i$. If $b,d\in\Sigma^{\col}_{\v}$ then write
$b=\gamma_1^{b_1}\cdots\gamma_r^{b_r}\in\Sigma^{\col}_{\v}$ for
$(b_1,\cdots,b_r)\in\prod_{i=1}^r\Sigma^{\mon}_{\v,\gamma_i}$,
and similarly write
$d=\gamma_1^{d_1}\cdots\gamma_r^{d_r}\in\Sigma_{\v}^{\col}$.
The Boltzmann weight of the fused vertex (call it $v$)
is just the partition function of the configuration on the right-hand side
of the figure. That is, there will be a unique assigment of
spins to the internal edges on the right such that the
Boltzmann weights are nonzero, and $\beta(v)$ is the
product of the Boltzmann weights of the monochrome vertices,
from Figure~\ref{fig:monochrome}.}
\label{fig:fusion}
\end{figure}

\begin{figure}[b]
  \[\begin{array}{|c|c|c|c|}
  \hline
  \begin{tikzpicture}
    \draw[semithick] (-.75,-.75)--(.75,.75);
    \draw[semithick] (-.75,.75)--(.75,-.75);
    \draw[semithick,fill=white] (-.75,-.75) circle (.28);
    \draw[semithick,fill=white] (-.75,.75) circle (.28);
    \draw[semithick,fill=white] (.75,.75) circle (.28);
    \draw[fill=white] (.75,.-.75) circle (.28);
    \node at (-.75,-.75) {$+$};
    \node at (-.75,.75) {$+$};
    \node at (.75,.75) {$+$};
    \node at (.75,.-.75) {$+$};
    \path[semithick,fill=white] (0,0) circle (.4);
    \node at (0,0) {$z_i,z_j$};
  \end{tikzpicture} &
  \begin{tikzpicture}
    \draw[line width=0.4mm,red] (-.75,-.75)--(.75,.75);
    \draw[line width=0.4mm,red] (-.75,.75)--(.75,-.75);
    \draw[line width=0.4mm,red,fill=white] (-.75,-.75) circle (.28);
    \draw[line width=0.4mm,red,fill=white] (-.75,.75) circle (.28);
    \draw[line width=0.4mm,red,fill=white] (.75,.75) circle (.28);
    \draw[line width=0.4mm,red,fill=white] (.75,.-.75) circle (.28);
    \node at (-.75,-.75) {$c$};
    \node at (-.75,.75) {$c$};
    \node at (.75,.75) {$c$};
    \node at (.75,.-.75) {$c$};
    \path[fill=white] (0,0) circle (.4);
    \node at (0,0) {$z_i,z_j$};
  \end{tikzpicture} &
  \begin{tikzpicture}
    \draw[line width=0.4mm,blue] (-.75,-.75)--(0,0);
    \draw[line width=0.4mm,red] (-.75,.75)--(0,0);
    \draw[line width=0.4mm,red] (.75,.75)--(0,0);
    \draw[line width=0.4mm,blue] (.75,-.75)--(0,0);
    \draw[line width=0.4mm,blue,fill=white] (-.75,-.75) circle (.28);
    \draw[line width=0.4mm,red,fill=white] (-.75,.75) circle (.28);
    \draw[line width=0.4mm,red,fill=white] (.75,.75) circle (.28);
    \draw[line width=0.4mm,blue,fill=white] (.75,.-.75) circle (.28);
    \node at (-.75,-.75) {$c$};
    \node at (-.75,.75) {$d$};
    \node at (.75,.75) {$d$};
    \node at (.75,.-.75) {$c$};
    \path[fill=white] (0,0) circle (.4);
    \node at (0,0) {$z_i,z_j$};
  \end{tikzpicture} &
  \begin{tikzpicture}
    \draw[line width=0.4mm,blue] (-.75,-.75)--(0,0);
    \draw[line width=0.4mm,red] (-.75,.75)--(0,0);
    \draw[line width=0.4mm,blue] (.75,.75)--(0,0);
    \draw[line width=0.4mm,red] (.75,-.75)--(0,0);
    \draw[line width=0.4mm,blue,fill=white] (-.75,-.75) circle (.28);
    \draw[line width=0.4mm,red,fill=white] (-.75,.75) circle (.28);
    \draw[line width=0.4mm,blue,fill=white] (.75,.75) circle (.28);
    \draw[line width=0.4mm,red,fill=white] (.75,.-.75) circle (.28);
    \node at (-.75,-.75) {$c$};
    \node at (-.75,.75) {$d$};
    \node at (.75,.75) {$c$};
    \node at (.75,.-.75) {$d$};
    \path[fill=white] (0,0) circle (.4);
    \node at (0,0) {$z_i,z_j$};
  \end{tikzpicture} 
  \\
  \hline
  z_i-tz_j & z_i-tz_j &
  \begin{array}{ll} (1-t)z_i &\text{if $c<d$}\\(1-t)z_j &\text{if
      $c>d$}\end{array} & \begin{array}{ll} z_i-z_j &\text{if $c>d$}\\t(z_i-z_j) &\text{if $c<d$}\end{array} \\
  \hline
  \hline
  \begin{tikzpicture}
    \draw[line width=0.4mm,red] (-.75,-.75)--(0,0);
    \draw[semithick] (-.75,.75)--(0,0);
    \draw[semithick] (.75,.75)--(0,0);
    \draw[line width=0.4mm,red] (.75,-.75)--(0,0);
    \draw[line width=0.4mm,red,fill=white] (-.75,-.75) circle (.28);
    \draw[semithick,fill=white] (-.75,.75) circle (.28);
    \draw[semithick,fill=white] (.75,.75) circle (.28);
    \draw[line width=0.4mm,red,fill=white] (.75,.-.75) circle (.28);
    \node at (-.75,-.75) {$c$};
    \node at (-.75,.75) {$+$};
    \node at (.75,.75) {$+$};
    \node at (.75,.-.75) {$c$};
    \path[fill=white] (0,0) circle (.4);
    \node at (0,0) {$z_i,z_j$};
  \end{tikzpicture} &
  \begin{tikzpicture}
    \draw[semithick] (-.75,-.75)--(0,0);
    \draw[line width=0.4mm,red] (-.75,.75)--(0,0);
    \draw[line width=0.4mm,red] (.75,.75)--(0,0);
    \draw[semithick] (.75,.-.75)--(0,0);
    \draw[semithick,fill=white] (-.75,-.75) circle (.28);
    \draw[line width=0.4mm,red,fill=white] (-.75,.75) circle (.28);
    \draw[line width=0.4mm,red,fill=white] (.75,.75) circle (.28);
    \draw[semithick,fill=white] (.75,-.75) circle (.28);
    \node at (-.75,-.75) {$+$};
    \node at (-.75,.75) {$c$};
    \node at (.75,.75) {$c$};
    \node at (.75,.-.75) {$+$};
    \path[fill=white] (0,0) circle (.4);
    \node at (0,0) {$z_i,z_j$};
  \end{tikzpicture} &
  \begin{tikzpicture}
    \draw[semithick] (-.75,-.75)--(0,0);
    \draw[line width=0.4mm,red] (-.75,.75)--(0,0);
    \draw[semithick] (.75,.75)--(0,0);
    \draw[line width=0.4mm,red] (.75,-.75)--(0,0);
    \draw[semithick,fill=white] (-.75,-.75) circle (.28);
    \draw[line width=0.4mm,red,fill=white] (-.75,.75) circle (.28);
    \draw[semithick,fill=white] (.75,.75) circle (.28);
    \draw[line width=0.4mm,red,fill=white] (.75,-.75) circle (.28);
    \node at (-.75,-.75) {$+$};
    \node at (-.75,.75) {$c$};
    \node at (.75,.75) {$+$};
    \node at (.75,.-.75) {$c$};
    \path[fill=white] (0,0) circle (.4);
    \node at (0,0) {$z_i,z_j$};
  \end{tikzpicture} &
  \begin{tikzpicture}
    \draw[line width=0.4mm,red] (-.75,-.75)--(0,0);
    \draw[semithick] (-.75,.75)--(0,0);
    \draw[line width=0.4mm,red] (.75,.75)--(0,0);
    \draw[semithick] (.75,-.75)--(0,0);
    \draw[line width=0.4mm,red,fill=white] (-.75,-.75) circle (.28);
    \draw[semithick,fill=white] (-.75,.75) circle (.28);
    \draw[line width=0.4mm,red,fill=white] (.75,.75) circle (.28);
    \draw[semithick,fill=white] (.75,-.75) circle (.28);
    \node at (-.75,-.75) {$c$};
    \node at (-.75,.75) {$+$};
    \node at (.75,.75) {$c$};
    \node at (.75,-.75) {$+$};
    \path[fill=white] (0,0) circle (.4);
    \node at (0,0) {$z_i,z_j$};
  \end{tikzpicture} 
  \\
  \hline
  (1-t) z_i & (1-t) z_j &
  t(z_i-z_j) & z_i-z_j\\
  \hline
  \end{array}\]
  \caption{The R-matrix. This R-matrix is nearly identical to Figure~6 in \cite{BBBGIwahori}, 
  with one difference: for the first entry ($\small{+}{+}{+}{+}$), the R-matrix value is $z_i-tz_j$ here, 
  but $z_j-tz_i$ in~\cite{BBBGIwahori}. This small difference makes a difference in the
  quantum group: in~\cite{BBBGIwahori} the quantum group is a Drinfeld twist of 
  $U_{\sqrt{t}}(\widehat{\mathfrak{gl}}(r|1))$, but this R-matrix is that of
  $U_{\sqrt{t}}(\widehat{\mathfrak{sl}}(r+1))$. (Compare~\cite{JimboToda}, equation~(3.5).)}
  \label{fig:rmatrix}
\end{figure}

\begin{figure}[ht]
\begin{equation}
  \label{ybepre}
  \begin{array}{ccc}
  \ybelhs{z_i,z_j,\gamma_k}{z_i,\gamma_k}{z_j,\gamma_k}{0.5}&\qquad\qquad&
  \yberhs{z_i,z_j,\gamma_{k-1}}{z_i,\gamma_k}{z_j,\gamma_k}{0.5}\end{array}
\end{equation}
\caption{Auxiliary Yang-Baxter equations. The auxiliary R-matrix labeled $z_i,z_j,\gamma_k$ is from Figure~\ref{fig:genrmatrix}. Our convention is that $\gamma_0=\gamma_r$, which agrees with the R-matrix in Figure~\ref{fig:rmatrix} (that is, if there is only one column, then we have $\gamma_0 = \gamma_1$, and $\gamma$ can be omitted.). Note that
we are using the monochrome vertices so $c,f\in\Sigma^{\mon}_{\v,\gamma_k}$.
This Yang-Baxter equation may be proved by direct examination of the possible cases.}
\label{fig:ybeaux}
\end{figure}

\newcommand{\gamgamunco}[5]{
    \draw (-.75,-.75)--(.75,.75);
    \draw (-.75,.75)--(.75,-.75);
    \draw[fill=white] (-.75,-.75) circle (.28);
    \draw[fill=white] (-.75,.75) circle (.28);
    \draw[fill=white] (.75,.75) circle (.28);
    \draw[fill=white] (.75,.-.75) circle (.28);
    \node at (-.75,-.75) {$#1$};
    \node at (-.75,.75) {$#2$};
    \node at (.75,.75) {$#3$};
    \node at (.75,.-.75) {$#4$};
    \path[fill=white] (0,0) circle (.4);
    \node at (0,0) {$#5$};
    \node at (0,1) { };}

\newcommand{\spokea}[2]{
  \draw[line width=0.5mm, #1] (-.75,-.75)--(-.25,-.25);
  \draw[line width=0.5mm, #1,fill=white] (-.75,-.75) circle (.28);
  \node at (-.75,-.75) {$#2$};
}

\newcommand{\spokeb}[2]{
  \draw[line width=0.5mm, #1] (-.75,.75)--(-.25,.25);
  \draw[line width=0.5mm, #1,fill=white] (-.75,.75) circle (.28);
  \node at (-.75,.75) {$#2$};
}

\newcommand{\spokec}[2]{
  \draw[line width=0.5mm, #1] (.75,.75)--(.25,.25);
  \draw[line width=0.5mm, #1,fill=white] (.75,.75) circle (.28);
  \node at (.75,.75) {$#2$};
}

\newcommand{\spoked}[2]{
  \draw[line width=0.5mm, #1] (.75,-.75)--(.25,-.25);
  \draw[line width=0.5mm, #1,fill=white] (.75,.-.75) circle (.28);
  \node at (.75,-.75) {$#2$};
}

\newcommand{\spokepa}{
  \node at (-.75,-.75) {$+$};
}

\newcommand{\spokepb}{
  \node at (-.75,.75) {$+$};
}

\newcommand{\spokepc}{
  \node at (.75,.75) {$+$};
}

\newcommand{\spokepd}{
  \node at (.75,-.75) {$+$};
}

\newcommand{\cv}[2]{
  \path[fill=white] (0,0) circle (.25);
  \node at (0,0) {$z_i,z_j,c$};
}

\begin{figure}[htb]
  \[
  \begin{array}{|c|c|c|c|}
    \hline
    \begin{tikzpicture}\gamgamunco{+}{+}{+}{+}{c}\spokepa\spokepb\spokepc\spokepd\cv{red}{c}\end{tikzpicture} &
    \begin{tikzpicture}\gamgamunco{d}{d}{d}{d}{c}\spokea{blue}{d}\spokeb{blue}{d}\spokec{blue}{d}\spoked{blue}{d}\cv{red}{c}\end{tikzpicture} &
    \begin{tikzpicture}\gamgamunco{d}{e}{d}{e}{c}\cv{red}{c}
      \spokea{blue}{d}\spokeb{darkgreen}{e}\spokec{blue}{d}\spoked{darkgreen}{e}\cv{red}{c}\end{tikzpicture} &
    \begin{tikzpicture}\gamgamunco{d}{e}{e}{d}{c}\cv{red}{c}
      \spokea{blue}{d}\spokeb{darkgreen}{e}\spokec{darkgreen}{e}\spoked{blue}{d}\cv{red}{c}\end{tikzpicture} \\
    \hline
    z_i-tz_j & \begin{array}{c}z_i-tz_j\\\text{$c=d$ allowed.}\end{array} &
      \begin{array}{c}\begin{array}{ll} t(z_i-z_j)&\text{if $e>d$}\\z_i-z_j&\text{if $d>e$}\\\end{array}\\\text{$c=d$ or $e$ allowed}\end{array}&
      \begin{array}{ll}
        (1-t)z_j&\text{if $e>c>d$}\\
                &\text{or $c>d>e$}\\
                &\text{or $d>e>c$}\\
        (1-t)z_i&\text{if $d>c>e$}\\
                &\text{or $c>e>d$}\\
                &\text{or $e>d>c$}\end{array}\\
    \hline
    \begin{tikzpicture}\gamgamunco{d}{c}{c}{d}{c}\cv{red}{c}\spokea{blue}{d}\spokeb{red}{c}\spokec{red}{c}\spoked{blue}{d}\cv{red}{c}\end{tikzpicture} &
    \begin{tikzpicture}\gamgamunco{c}{d}{d}{c}{c}\cv{red}{c}\spokea{red}{c}\spokeb{blue}{d}\spokec{blue}{d}\spoked{red}{c}\cv{red}{c}\end{tikzpicture} &
    \begin{tikzpicture}\gamgamunco{+}{d}{+}{d}{c}\spokepa\spokeb{blue}{d}\spokepc\spoked{blue}{d}\cv{red}{c}\end{tikzpicture}&
    \begin{tikzpicture}\gamgamunco{d}{+}{d}{+}{c}\spokea{blue}{d}\spokepb\spokec{blue}{d}\spokepd\cv{red}{c}\end{tikzpicture}\\
    \hline
    (1-t)z_j&(1-t)z_i&\begin{array}{c}t(z_i-z_j)\\\text{$c=d$ allowed}\end{array}&\begin{array}{c}z_i-z_j\\\text{$c=d$ allowed}\end{array}\\
    \hline
    &\begin{tikzpicture}\gamgamunco{d}{+}{+}{d}{c}\spokea{blue}{d}\spokepb\spokepc\spoked{blue}{d}\cv{red}{c}\end{tikzpicture}&
    \begin{tikzpicture}\gamgamunco{+}{d}{d}{+}{c}\spokepa\spokeb{blue}{d}\spokec{blue}{d}\spokepd\cv{red}{c}\end{tikzpicture}&\\
    \hline
    &\begin{array}{c}(1-t)z_i\\\text{$c=d$ allowed}\end{array}&
    \begin{array}{c}(1-t)z_j\\\text{$c=d$ allowed}\end{array}&\\
    \hline
  \end{array}\]
\caption{R-vertices for auxiliary Yang-Baxter equations. These are labeled by the spectral parameters $z_i$, $z_j$ and a color $c$.  Except for the first entry, these weights are identical to the weights in~\cite{BBBGIwahori}, Figure~11, where $z_i-tz_j$ in this paper is replaced by $z_j-tz_i$. This seemingly minor difference changes the quantum group from $U_q(\widehat{\mathfrak{gl}}(1|r))$ in~\cite{BBBGIwahori} to $U_q(\widehat{\mathfrak{gl}}(r+1))$ in this paper.}
  \label{fig:genrmatrix}
\end{figure}

Like the uncolored models, there are two variants of the colored models,
called the $R$-models and the $P$-models. Both kinds of models have two
different but equivalent realizations called \textit{fused} and \textit{unfused}.
The two versions have the same partition function, and both are
based on grids with $r$ rows, but the unfused model has $r$ times
as many columns, and hence $r$ times as many vertices and edges
(both horizontal and vertical) as the fused model. Unless otherwise
made plain, we will be describing the fused model in our explanations.
We will use the unfused model in a few crucial places, to define the 
Boltzmann weights, and to prove the Yang-Baxter equations and the
local lifting property.

Let us describe the spinsets of the colored models. There are separate
spinsets for the horizontal and vertical edges. The horizontal spinset is
\[ \Sigma_{\h}^{\col} = \{ +, \gamma_1, \cdots, \gamma_r \}, \]
where $+$ is a special \textit{vacuum color} and the 
elements $\gamma_i$ are called colors.
We give them an ordering so that $\gamma_1 > \gamma_2 > \cdots > \gamma_r$. The vertical
spinset $\Sigma^{\col}_{\v}$ consists of all multisets on $\{ \gamma_1,
\cdots, \gamma_r \}$. Thus an element $b$ of $\Sigma^{\col}_{\v}$
is a map $m : \{ \gamma_1, \cdots, \gamma_r \} \longrightarrow \mathbb{N}= \{ 0, 1, 2,
\cdots \}$, where $m (\gamma_i)$ is interpreted as the multiplicity of $\gamma_i$ in
$b$. If $m_i = m (\gamma_i)$ we may write $b = \gamma_1^{m_1} \cdots
\gamma_r^{m_r}$. We may denote the ``vacuum'' state where all $m_i=0$
as $\boson{0}$.

Let $c\in\{\gamma_1,\cdots,\gamma_r\}$ be a color.
In a state $\mathfrak{s}$ of the model, we recall that every edge $E$
is assigned a spin $\mathfrak{s}_E$ from its spinset $\Sigma_E$. 
If $E$ is a horizontal edge, so $\Sigma_E=\Sigma_{\h}$, we will say that the 
edge \textit{carries} the color $c$ if $\mathfrak{s}_E=c$. On
the other hand if $E$ is a vertical edge, we will say that $E$
\textit{carries} the color $c$ if 
$\mathfrak{s}_E=\gamma_1^{m_1}\cdots\gamma_r^{m_r}$ where $c=\gamma_i$ 
and $m_i>0$.

We next turn to the boundary conditions of the system.
By a \textit{flag} we mean a sequence $\mathbf{c}=(c_1,\cdots,c_r)$ of
colors, so that $c_i\in\{\gamma_1,\cdots,\gamma_r\}$. The colors may or
may not repeat. We will call $\mathbf{c}_0=(\gamma_1,\cdots,\gamma_r)$ the 
\textit{standard flag}. 

The (unfused) colored models, like the uncolored ones, are based on
an $r\times N$ grid as in Figure~\ref{fig:uncolored_model}, where 
$N$ is any sufficiently large integer. As in the uncolored
model, the columns are labeled in order increasing from
right to left. To describe the
boundary conditions, we need three pieces of data, a partition
$\lambda$ and two flags $\mathbf{c}$ and $\mathbf{d}$. It is
necessary that each color $\gamma_i$ occurs equal number of
times in $\mathbf{c}$ and $\mathbf{d}$, since otherwise there
will be no states with nonzero Boltzmann weight and the partition
function will vanish. We then use the following boundary
conditions. 

\begin{remark}
As in Section~\ref{sec:uncolored} we are taking $\lambda$ to be a
partition. However following Remark~\ref{rem:dominantpf} there is
no difficulty in extending to the case where $\lambda$ is a dominant
weight, which we will need in the last section, by adding columns
with negative labels. The renormalization by $(z_1\cdots z_r)^M$
explained in that remark works correctly and the
monostatic system $\mathfrak{S}_{\lambda, \mathbf{c},\mathbf{c}}(\mathbf{z};t)$ 
in Proposition~\ref{prop:monostatic} (Figure~\ref{fig:ground}) 
has weight $\mathbf{z}^\lambda t^{\ell(w)}$
even when $\lambda$ is allowed to have negative parts. In particular
Theorem~\ref{thm:demeval} below will remain valid when we apply it later.
\end{remark}

For the horizontal boundary spins on the left edge, we take
the boundary value $\plus$. For the horizontal spins on the right edge
in the $i$-th row, we take the boundary value $d_i$. The vertical
spins on the bottom row have boundary values the vacuum 
$\boson{0}:=\gamma_1^{0}\cdots\gamma_r^{0}$.
Now for the top boundary spins, in the $j$-th column we take
the boson $\gamma_1^{m_1}\cdots\gamma_r^{m_r}$, where $m_k$
is the number of pairs $(\lambda_i,c_i)$ with $\lambda_i=j$,
and $c_i=\gamma_i$.


To specify the Boltzmann weights, we will make use of a
phenomenon previously noted in~\cite{BBBGIwahori},
namely the \textit{monochrome factorization}, which we
now explain. There will be two equivalent versions of
the model, the colored or \textit{fused} model, based
on a grid with $rN$ vertices, and another \textit{unfused}
model, in which every vertex of the fused model is
replaced by $r$ vertices, and every vertical edge is
replaced by $r$ vertical edges. Thus there are a total
of $r^2N$ vertices in the unfused model. 

Therefore we begin by describing the vertices in the
unfused model. Such a vertex is described by two pieces of data: 
a spectral parameter $z_i$ and a color $c$. The vertical edges 
attached to the vertex of type $z_i,c$ can only carry the color $c$. 
For this reason the edge is described as \textit{monochrome}
(of color $c$). Its spinset $\Sigma^{\mon}_{\v,c}$ therefore
resembles the uncolored spinset $\Sigma^{\unc}_{\v}$ and is
in bijection with $\mathbb{N}$. The Boltzmann weights of
the admissible monochrome vertices are described in Figure~\ref{fig:monochrome}. 

Now we may describe the Boltzmann weights of the general
colored vertices by fusion, following the scheme in Figure~\ref{fig:fusion}.
Every vertex in the colored (fused) model is described by a single
spectral parameter $z_i$. Expanded, it is the fusion of $r$
distinct monochrome vertices of type $z_i,\gamma_r,\cdots,z_i,\gamma_1$
arranged in order from left to right. (Our convention is that $\gamma_r$
the smallest color, so this does not differ from the procedure in
\cite{BBBGIwahori} -- the apparent difference is just notational.)

Now let us describe the \textit{fusion} procedure used to define the Boltzmann weights
for the colored $P$- and $R$-models. Let 
$(a,b,c,d)\in\Sigma_{\h}^{\col}\times\Sigma_{\v}^{\col}\times\Sigma_{\h}^{\col}\times\Sigma_{\v}^{\col}$. 
We write $b=\gamma_1^{b_1}\cdots\gamma_r^{b_r}$ and
$d=\gamma_1^{d_1}\cdots\gamma_r^{d_r}$. Now consider the
configuration on the right side of Figure~\ref{fig:fusion}.
If there is a way of assigning boundary spins to the
interior edges such that the spins of the $r$ monochrome
vertices are all nonzero, that way is unique, and if
so we define the Boltzmann weight at the fused vertex
to be the product of the Boltzmann weights at the
unfused vertices. If there is no such way of assigning
interior spins, the Boltzmann weight is zero.

\begin{proposition}
The Yang-Baxter equation (Figure~\ref{fig:ybe})
is satisfied by the colored weights. For this, use the
R-matrix in Figure~\ref{fig:rmatrix} and the colored
weights defined above. The same R-matrix works for
both the $P$- and the $R$-weights.
\end{proposition}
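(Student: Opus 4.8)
The plan is to reduce the colored Yang--Baxter equation to the \emph{auxiliary} Yang--Baxter equations of Figure~\ref{fig:ybeaux}, which are stated at the level of monochrome vertices and can be checked case-by-case, and then to propagate the auxiliary R-matrix through the entire fused column by a telescoping (``zipper'') argument. Recall that each fused vertex of spectral parameter $z_i$ is, by definition, the fusion of the $r$ monochrome vertices $z_i,\gamma_r$, $z_i,\gamma_{r-1}$, \ldots, $z_i,\gamma_1$ read left to right. Thus the left-hand configuration of Figure~\ref{fig:ybe} with two fused vertices of parameters $z_i,z_j$ and the crossing R-matrix $R_{z_i,z_j}$ expands into a configuration with $2r$ monochrome vertices together with the single crossing $R_{z_i,z_j}$ of Figure~\ref{fig:rmatrix}; similarly for the right-hand side. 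So it suffices to show these two expanded monochrome configurations have equal partition functions.

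First I would establish the auxiliary identities of Figure~\ref{fig:ybeaux}: for each color $\gamma_k$ there is an R-vertex $R_{z_i,z_j,\gamma_k}$ (Figure~\ref{fig:genrmatrix}) such that the RTT relation holds with a pair of monochrome vertices of color $\gamma_k$, but with the subtlety that the color label on the R-vertex \emph{shifts} from $\gamma_k$ to $\gamma_{k-1}$ as it passes the column (the convention $\gamma_0=\gamma_r$ makes this cyclic and reconciles it with Figure~\ref{fig:rmatrix} for a single column). This is a finite check over the spin configurations listed in Figure~\ref{fig:genrmatrix}, and the paper explicitly invites direct verification. Second, I would run the zipper: start with $R_{z_i,z_j}=R_{z_i,z_j,\gamma_r}$ on the far left, and apply the auxiliary YBE successively to push it rightward past the monochrome vertices of colors $\gamma_r,\gamma_{r-1},\ldots,\gamma_1$ in each of the two rows, the color label on the R-vertex cycling through $\gamma_{r-1},\gamma_{r-2},\ldots$ and returning to $\gamma_r=\gamma_0$ after all $r$ columns. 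Each application is an equality of partition functions of a $3$-vertex patch by the auxiliary YBE; composing them moves the crossing from the left of the fused column to its right, which is exactly the identity of Figure~\ref{fig:ybe} for the fused weights. One has to be slightly careful that intermediate internal vertical spins are summed consistently, but since fusion already forces a unique nonzero internal assignment per summand, the bookkeeping is clean.

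Finally, the argument is uniform in the choice of $P$- versus $R$-weights: the monochrome weights of Figure~\ref{fig:monochrome} for the $P$- and $R$-models differ only in the $C(n)$ column ($z_i(1-t^{n+1})$ versus $z_i(1-t)$), and this difference is a ``change of basis'' that does not touch the R-matrix; since the auxiliary YBE of Figure~\ref{fig:ybeaux} only ever involves $B$- and $D$-type monochrome vertices together with the R-vertices (the crossing cannot change the boson number by more than the colors it carries, so no $C(n)$ vertex enters), the same auxiliary identities and hence the same zipper proof apply verbatim. I expect the main obstacle to be purely organizational rather than conceptual: carefully tracking the color-shift convention $\gamma_k\mapsto\gamma_{k-1}$ through the full cycle of $r$ columns and verifying that after all $r$ steps the R-vertex has returned to the color it must have on the right-hand side of Figure~\ref{fig:ybe} (namely $\gamma_r$ again), so that the two ends of the zipper genuinely match the claimed fused identity; the underlying case analysis for the auxiliary YBE, while tedious, is routine.
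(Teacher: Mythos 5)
Your core argument coincides with the paper's proof: replace each fused vertex by its monochrome factorization, verify the auxiliary Yang--Baxter equation of Figure~\ref{fig:ybeaux} by a finite case check, and then train the auxiliary R-vertex through the $2r$ monochrome vertices, its color label cycling $\gamma_r\to\gamma_{r-1}\to\cdots\to\gamma_1\to\gamma_0=\gamma_r$ so that at both ends it agrees with the fused R-matrix of Figure~\ref{fig:rmatrix}. That part is correct and is exactly how the paper proceeds.

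The gap is in your last paragraph, where you claim the $P$- and $R$-cases follow ``verbatim.'' Two of the assertions there are false. First, the monochrome $P$- and $R$-weights differ in the $D(n)$ column as well as the $C(n)$ column ($D(n)$ equals $1$ for $P$ but $\frac{1-t^{n+1}}{1-t}$ for $R$). Second, $C(n)$ and $D(n)$ vertices do occur in the auxiliary Yang--Baxter configurations: a horizontal line may carry exactly the color $\gamma_k$ of the monochrome column, and its absorption into (or emission from) the vertical line is precisely a $C$-type (resp.\ $D$-type) vertex. For instance, taking boundary data $a=\gamma_k$, $b=d=e=+$, $c=\boson{n}$, $f=\boson{n\!+\!1}$ in the configurations of Figure~\ref{fig:ybeaux}, every nonzero state on either side contains a $C$-type vertex, so these cases cannot be excluded from the check and the check genuinely depends on which weight system is used. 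The conclusion is nevertheless true, and the argument can be repaired in either of two ways: (i) carry out the case analysis of Figure~\ref{fig:ybeaux} separately for the $P$- and the $R$-weights, which is what the paper's ``examination of the possible cases'' amounts to; or (ii) make your ``change of basis'' remark precise: with $g(n)=\prod_{i=1}^{n}\frac{1-t^i}{1-t}$ one has $\beta_R=\beta_P\cdot g(n_{\mathrm{top}})/g(n_{\mathrm{bot}})$ at every monochrome vertex, and on each side of the auxiliary identity these gauge factors telescope along the vertical line to $g(c)/g(f)$, which depends only on the fixed boundary spins; hence the auxiliary Yang--Baxter equation, with the same R-vertices, holds for the $R$-weights if and only if it holds for the $P$-weights.
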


\begin{proof}
We may replace the vertices by the corresponding
unfused vertices, and now make use of the auxiliary
Yang-Baxter equation in Figure~\ref{fig:ybeaux}. This may
be checked by consideration of the possible cases. The
auxiliary R-matrix $z_i,z_j,\gamma_k$ is defined in
Figure~\ref{fig:genrmatrix}. Note that if $k=r$ this
reduces to the R-matrix in Figure~\ref{fig:rmatrix}.
The R-matrix changes after moving past the monochrome
vertex, but when it has moved past all $r$ vertices,
it reverts to its original form.
\end{proof}

\tikzstyle{invisiblepath}=[draw=none, line width=1.5pt, spins]
\tikzstyle{path}=[line width=1.5pt, spins]
\tikzstyle{spins}=[every node/.style={spin}]
\tikzstyle{spin}=[circle, draw, fill=white, minimum size=14pt, inner sep=0pt, text=black]
\tikzstyle{halo}=[circle, fill=white, inner sep=0pt]
\tikzstyle{rectanglehalo}=[rectangle, fill=white, inner sep=0pt]

\usetikzlibrary{decorations.pathreplacing}

\newcommand{\icegrid}[2]{

    \foreach \i in {0,...,#1}{
        \pgfmathtruncatemacro\col{int(#1-\i)}
        \draw (2*\i+1,0) -- (2*\i+1,2*#2) node[spin, label=above:$\col$] {$+$};
    }

    \foreach \i in {1,...,#2}{
        \pgfmathtruncatemacro\row{int(#2-\i+1)}
        \draw (0,2*\i-1) node[spin, label=left:$\row$] {$+$} -- (2*#1+2, 2*\i-1);
    }

    \foreach \i in {0,...,#1}{
        \foreach \j in {1,...,#2}{
            \node[spin] at (2*\i+1, 2*\j-2) {$+$};
            \node[spin] at (2*\i+2, 2*\j-1) {$+$};
        }
    }
}

\newcommand{\icegridnorowlabels}[2]{

    \foreach \i in {0,...,#1}{
        \pgfmathtruncatemacro\col{int(#1-\i)}
        \draw (2*\i+1,0) -- (2*\i+1,2*#2) node[spin, label=above:$\col$] {$+$};
    }

    \foreach \i in {1,...,#2}{
        \pgfmathtruncatemacro\row{int(#2-\i+1)}
        \draw (0,2*\i-1) node[spin] {$+$} -- (2*#1+2, 2*\i-1);
    }

    \foreach \i in {0,...,#1}{
        \foreach \j in {1,...,#2}{
            \node[spin] at (2*\i+1, 2*\j-2) {$+$};
            \node[spin] at (2*\i+2, 2*\j-1) {$+$};
        }
    }
}

\newcommand{\icegridnocolumnlabels}[2]{

    \foreach \i in {0,...,#1}{
        \draw (2*\i+1,0) -- (2*\i+1,2*#2) node[spin] {$+$};
    }

    \foreach \i in {1,...,#2}{
        \pgfmathtruncatemacro\row{int(#2-\i+1)}
        \draw (0,2*\i-1) node[spin, label=left:$\row$] {$+$} -- (2*#1+2, 2*\i-1);
    }

    \foreach \i in {0,...,#1}{
        \foreach \j in {1,...,#2}{
            \node[spin] at (2*\i+1, 2*\j-2) {$+$};
            \node[spin] at (2*\i+2, 2*\j-1) {$+$};
        }
    }
}

\newcommand{\rowlabels}[2]{
   \foreach \i in {0,...,#1}{
      \foreach \j in {1,...,#2}{
         \pgfmathtruncatemacro\row{int(#2-\j+1)}
         \node[halo] at (2*\i+1, 2*\j-1) {$\scriptstyle z_\row$};
      }
    }
}

\begin{figure}[htb]
  \begin{centering}
  \begin{tikzpicture}[xscale=1.2, yscale=0.75, font=\small]
    \icegrid{4}{3}
    \draw[white,line width=0.5mm] (5,3) to (5,6);
    \draw [red, path] 
    (1,6) node {$3$} to [out=-90,in=180,looseness=1.5]
    (2,5) node {$3$} -- 
    (4,5) node {$3$} -- 
    (6,5) node {$3$} -- 
    (8,5) node {$3$} -- 
    (10,5) node {$3$};

    \draw [blue, path] (5+0.075,6) -- (5+0.075,2);
    \draw [darkgreen, path] (5-0.075,6) -- (5-0.075,4) to [out=-90,in=180,looseness=2.0] (6,3);
    
    \draw [blue, invisiblepath] 
    (5,6) node[draw=blue] {$\scriptstyle 1,2$} -- 
    (5,4) node[draw=blue] {$\scriptstyle 1,2$};

    \draw [blue, path]
    (5,2) node{$2$} to [out=-90,in=180,looseness=1.5] 
    (6,1) node{$2$} -- 
    (8,1) node{$2$} -- 
    (10,1) node{$2$};

    \draw[darkgreen, path] 
    (6,3) node {$1$} --
    (8,3) node {$1$} --
    (10,3) node {$1$};

  \end{tikzpicture} \\[2em]

\begin{tikzpicture}[xscale=0.4, yscale=0.75, font=\small]

      \icegridnocolumnlabels{14}{3}

      \draw[red, path]
      (5,6) node {$3$} to [out=-90,in=180,looseness=2]
      (6,5) node {$3$} --
      (8,5) node {$3$} --
      (10,5) node {$3$} -- 
      (12,5) node {$3$} -- 
      (14,5) node {$3$} -- 
      (16,5) node {$3$} -- 
      (18,5) node {$3$} -- 
      (20,5) node {$3$} -- 
      (22,5) node {$3$} -- 
      (24,5) node {$3$} -- 
      (26,5) node {$3$} -- 
      (28,5) node {$3$} -- 
      (30,5) node {$3$};

      \draw[blue, path]
      (15,6) node {$2$} --
      (15,4) node {$2$} --
      (15,2) node {$2$} to [out=-90,in=180,looseness=2]
      (16,1) node {$2$} --
      (18,1) node {$2$} --
      (20,1) node {$2$} --
      (22,1) node {$2$} --
      (24,1) node {$2$} --
      (26,1) node {$2$} --
      (28,1) node {$2$} --
      (30,1) node {$2$};

      \draw[darkgreen, path]
      (13,6) node {$1$} --
      (13,4) node {$1$} to [out=-90,in=180,looseness=2]
      (14,3) node {$1$} --
      (16,3) node {$1$} --
      (18,3) node {$1$} --
      (20,3) node {$1$} --
      (22,3) node {$1$} --
      (24,3) node {$1$} --
      (26,3) node {$1$} --
      (28,3) node {$1$} --
      (30,3) node {$1$};

      \foreach \i in {0,...,4}{
        \pgfmathtruncatemacro\col{int(4-\i)}
        \draw[line width=0.5pt, decoration={brace},decorate] (6*\i+0.2,7) -- node[above=6pt] {$\col$} (6*\i+6-0.2,7);
        \node[darkgreen] at (6*\i+1,6.65) {$1$};
        \node[blue] at (6*\i+3,6.65) {$2$};
        \node[red] at (6*\i+5,6.65) {$3$};

      }
  \end{tikzpicture}
  
  \end{centering}
  
\caption{The unique state of $\mathfrak{S}_{\lambda, \mathbf{c},\mathbf{c}}(\mathbf{z};t)$ 
for $G=\GL_3$ where $\lambda=(4,2,2)$, $\mathbf{c}=s_2\mathbf{c_0} = (R,G,B)$ and 
$\mathbf{c}_0=(\gamma_1,\gamma_2,\gamma_3)=(R,B,G)$. 
Top: the fused model. Bottom: the corresponding state in the unfused model.}
  \label{fig:ground}
  \end{figure}

\begin{figure}[htb]
\begin{tikzpicture}[scale=0.7]
\begin{scope}[shift={(-1,0)}]
  \draw (0,1) to [out = 0, in = 180] (2,3) to (4,3);
  \draw (0,3) to [out = 0, in = 180] (2,1) to (4,1);
  \draw (3,0.25) to (3,3.75);
  \draw (7,0.25) to (7,3.75);
  \draw (6,1) to (8,1);
  \draw (6,3) to (8,3);
  \draw[fill=white] (-.2,1) circle (.5);
  \draw[fill=white] (-.2,3) circle (.5);
  \draw[line width=0.5mm,red,fill=white] (8.2,3) circle (.5);
  \draw[line width=0.5mm,blue,fill=white] (8.2,1) circle (.5);
  \node at (-.2,1) {$+$};
  \node at (-.2,3) {$+$};
  \node at (5,3) {$\cdots$};
  \node at (5,1) {$\cdots$};
  \draw[densely dashed] (3,3.75) to (3,4.25);
  \draw[densely dashed] (3,0.25) to (3,-0.25);
  \draw[densely dashed] (7,3.75) to (7,4.25);
  \draw[densely dashed] (7,0.25) to (7,-0.25);
  \node at (8.2,3) {$c_i$};
  \node at (8.2,1) {\scriptsize$c_{i+1}$};
\path[fill=white] (3,3) circle (.5);
\node at (3,3) {\scriptsize$z_{i+1}$};
\path[fill=white] (3,1) circle (.4);
\node at (3,1) {\scriptsize$z_i$};
\path[fill=white] (7,3) circle (.5);
\node at (7,3) {\scriptsize$z_{i+1}$};
\path[fill=white] (7,1) circle (.4);
\node at (7,1) {\scriptsize$z_i$};
\path[fill=white] (1,2) circle (.3);
\node at (1,2) {\scriptsize$z_{i+1},z_i$};
\end{scope}
\begin{scope}[shift={(1,-5.5)}]
  \draw (4,1) to (6,1) to [out = 0, in = 180] (8,3);
  \draw (4,3) to (6,3) to [out = 0, in = 180] (8,1);
  \draw[line width=0.5mm,red,fill=white] (8.2,3) circle (.5);
  \draw[line width=0.5mm,blue,fill=white] (8.2,1) circle (.5);
  \draw (0,1) to (2,1);
  \draw (0,3) to (2,3);
  \draw (5,0.25) to (5,3.75);
  \draw (1,0.25) to (1,3.75);
  \draw[fill=white] (-.2,1) circle (.5);
  \draw[fill=white] (-.2,3) circle (.5);

  \node at (3,1) {$\cdots$};
  \node at (3,3) {$\cdots$};

  \draw[densely dashed] (1,3.75) to (1,4.25);
  \draw[densely dashed] (1,0.25) to (1,-0.25);
  \draw[densely dashed] (5,3.75) to (5,4.25);
  \draw[densely dashed] (5,0.25) to (5,-0.25);
  \path[fill=white] (1,3) circle (.4);
  \node at (1,3) {\scriptsize$z_{i}$};
  \path[fill=white] (1,1) circle (.5);
  \node at (1,1) {\scriptsize$z_{i+1}$};

  \path[fill=white] (5,3) circle (.4);
  \node (a) at (5,3) {\scriptsize$z_{i}$};
  \path[fill=white] (5,1) circle (.5);
  \node at (5,1) {\scriptsize$z_{i+1}$};

  \path[fill=white] (7,2) circle (.3);
  \node at (7,2) {\scriptsize$z_{i+1},z_i$};
  \node at (8.2,1) {\scriptsize$c_{i+1}$};
  \node at (8.2,3) {$c_i$};
  \node at (-.2,1) {$+$};
  \node at (-.2,3) {$+$};
\end{scope}
\end{tikzpicture}
\caption{Proof of Proposition~\ref{prop:demtrain}. Top: the system $\mathfrak{S}_{\lambda,\mathbf{c},\mathbf{d}}(s_i\mathbf{z};t)$
with the R-matrix attached. Bottom: after using the Yang-Baxter equation.}
\label{fig:train}
\end{figure}

\begin{figure}[htb]
  \[\begin{array}{|c|c|c|}
  \hline
  \begin{tikzpicture}
    \draw[semithick] (-.75,-.75)--(.75,.75);
    \draw[semithick] (-.75,.75)--(.75,-.75);
    \draw[semithick,fill=white] (-.75,-.75) circle (.4);
    \draw[semithick,fill=white] (-.75,.75) circle (.4);
    \draw[semithick,fill=white] (.75,.75) circle (.4);
    \draw[fill=white] (.75,.-.75) circle (.4);
    \node at (-.75,-.75) {$+$};
    \node at (-.75,.75) {$+$};
    \node at (.75,.75) {$+$};
    \node at (.75,.-.75) {$+$};
    \path[semithick,fill=white] (0,0) circle (.4);
    \node at (0,0) {$z_{i+1},z_i$};
  \end{tikzpicture} &
  \begin{tikzpicture}
    \draw[line width=0.4mm,blue] (-.75,-.75)--(0,0);
    \draw[line width=0.4mm,red] (-.75,.75)--(0,0);
    \draw[line width=0.4mm,red] (.75,.75)--(0,0);
    \draw[line width=0.4mm,blue] (.75,-.75)--(0,0);
    \draw[line width=0.4mm,blue,fill=white] (-.75,-.75) circle (.4);
    \draw[line width=0.4mm,red,fill=white] (-.75,.75) circle (.4);
    \draw[line width=0.4mm,red,fill=white] (.75,.75) circle (.4);
    \draw[line width=0.4mm,blue,fill=white] (.75,.-.75) circle (.4);
    \node at (-.75,-.75) {$c_{i+1}$};
    \node at (-.75,.75) {$c_i$};
    \node at (.75,.75) {$c_i$};
    \node at (.75,.-.75) {$c_{i+1}$};
    \path[fill=white] (0,0) circle (.4);
    \node at (0,0) {$z_{i+1},z_i$};
  \end{tikzpicture} &
  \begin{tikzpicture}
    \draw[line width=0.4mm,blue] (-.75,-.75)--(0,0);
    \draw[line width=0.4mm,red] (-.75,.75)--(0,0);
    \draw[line width=0.4mm,blue] (.75,.75)--(0,0);
    \draw[line width=0.4mm,red] (.75,-.75)--(0,0);
    \draw[line width=0.4mm,blue,fill=white] (-.75,-.75) circle (.4);
    \draw[line width=0.4mm,red,fill=white] (-.75,.75) circle (.4);
    \draw[line width=0.4mm,blue,fill=white] (.75,.75) circle (.4);
    \draw[line width=0.4mm,red,fill=white] (.75,.-.75) circle (.4);
    \node at (-.75,-.75) {$c_{i+1}$};
    \node at (-.75,.75) {$c_i$};
    \node at (.75,.75) {$c_{i+1}$};
    \node at (.75,.-.75) {$c_i$};
    \path[fill=white] (0,0) circle (.4);
    \node at (0,0) {$z_{i+1},z_i$};
  \end{tikzpicture} \\
  \hline
  z_{i+1}-tz_i & (1-t)z_{i+1} & z_{i+1}-z_i\\
  \hline
  \end{array}\]
  \caption{R-matrix values needed in the proof of Proposition~\ref{prop:demtrain}. These may be read off from
  Figure~\ref{fig:rmatrix} bearing in mind that $c_i>c_{i+1}$.}
  \label{fig:rvalues}
\end{figure}

Let us say that a flag $\mathbf{c}=(c_1,\cdots,c_r)$ 
is \textit{proper} if its colors are all distinct.
If this is so, since we are working with a palette of
only $r$ colors, we must have $\mathbf{c}=w\mathbf{c}_0$
for some $w$ in the symmetric group $W=S_r$, and
similarly for $\mathbf{d}$.
For the remainder of the section, we consider models
$\mathfrak{S}_{\lambda,\mathbf{c},\mathbf{d}}(\mathbf{z};t)$
in which the flags $\mathbf{c}$ and $\mathbf{d}$ are
proper.

\begin{remark}
If $\mathbf{c}$ and $\mathbf{d}$ are proper, we will use the notation 
$\mathfrak{S}_{\lambda,\mathbf{c},\mathbf{d}}(\mathbf{z};t)$
to denote either model with $P$- or $R$- weights. This is justified 
because
\[Z_P(\mathfrak{S}_{\lambda,\mathbf{c},\mathbf{d}}(\mathbf{z};t))=
Z_R(\mathfrak{S}_{\lambda,\mathbf{c},\mathbf{d}}(\mathbf{z};t)).\]
\end{remark}

\begin{proof}
Indeed, with this assumption the Boltzmann weights of types
$C(n)$ and $D(n)$ in Figure~\ref{fig:monochrome} can only
appear with $n=0$, where they are the same in both models. So the Boltzmann
weights are the same at every vertex for the $P$- and $R$-models.
\end{proof}

If $\lambda=(\lambda_1,\cdots,\lambda_r)$ is a partition, we will denote
by $W_\lambda$ the stabilizer of $\lambda$ in~$W$. A system is
\textit{monostatic} if it has a single state, as in the next Proposition.

\begin{proposition}
\label{prop:monostatic}
Let $\mathbf{c}$ be a proper flag. Write $\mathbf{c}=w\mathbf{c}_0$.
Then
\[Z(\mathfrak{S}_{\lambda,\mathbf{c},\mathbf{c}}(\mathbf{z};t))=
t^{\ell(w)}\mathbf{z}^\lambda.\]
\end{proposition}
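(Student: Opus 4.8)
The plan is to prove the proposition in two steps: first that the system has a single state, which I describe explicitly, and then that its Boltzmann weight is $t^{\ell(w)}\mathbf{z}^{\lambda}$.

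\emph{Step 1 (the unique state).} In $\mathfrak{S}_{\lambda,\mathbf{c},\mathbf{c}}(\mathbf{z};t)$ the left boundary carries $\plus$, the bottom boundary carries $\boson{0}$, the top of column $j$ carries $\{c_i:\lambda_i=j\}$ (distinct colours, since $\mathbf{c}$ is proper), and the right boundary of row $i$ carries $c_i$. From Figures~\ref{fig:monochrome} and~\ref{fig:fusion} one reads off colour conservation at every vertex (for each colour, left plus top equals right plus bottom), so since no colour occurs on the left or bottom boundary, in any admissible state each colour enters through the top and exits through the right along a path that only moves right and down, and each horizontal edge carries at most one colour. I would then prove uniqueness by induction on $i=1,\dots,r$, with the hypothesis that the vertical edge between rows $i-1$ and $i$ in column $j$ carries exactly $\{c_m:m\geqslant i,\ \lambda_m=j\}$ (true for $i=1$ by the boundary conditions). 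Given this, $c_i$ enters row $i$ from above only in column $\lambda_i$, and since it must exit at the right of row $i$ and cannot rise, it runs straight right along row $i$ from column $\lambda_i$ to column $0$; because $\lambda_1\geqslant\cdots\geqslant\lambda_r$, each $c_m$ with $m>i$ enters row $i$ from above in a column $\lambda_m\leqslant\lambda_i$, where the right-going horizontal edge is already taken by $c_i$, so $c_m$ must descend into row $i+1$ — which is the hypothesis at level $i+1$. Thus the state is unique: each $c_i$ descends column $\lambda_i$ from the top edge to row $i$ and then runs to the right boundary (this is the state of Figure~\ref{fig:ground}).

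\emph{Step 2 (the weight).} The only vertices incident to a colour are those in row $i$, column $j\leqslant\lambda_i$; I would evaluate each by expanding into its $r$ monochrome constituents. Setting $c_i=\gamma_k$: the colours $\{c_m:m>i,\ \lambda_m=j\}$ run straight down, $c_i$ either runs through horizontally (when $j<\lambda_i$) or is released rightward by a $D(0)$ vertex of weight $1$ (when $j=\lambda_i$; here properness guarantees $c_i$ is not also among the descending colours), and the monochrome vertex of colour $\gamma_\ell$ contributes $1$ for $\ell>k$, contributes $z_i$ for $\ell=k$ with $j<\lambda_i$, and contributes $t^{\#\{m>i:\lambda_m=j,\,c_m=\gamma_\ell\}}$ for $\ell<k$ (read off from the $B(n)$ weights in Figure~\ref{fig:monochrome}, using $\gamma_r<\cdots<\gamma_1$). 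Hence vertex $(i,j)$ weighs $t^{\#\{m>i:\lambda_m=j,\,c_m>c_i\}}$, times $z_i$ when $j<\lambda_i$; multiplying over $j\leqslant\lambda_i$ and using $\lambda_m\leqslant\lambda_i$ for $m>i$ gives $z_i^{\lambda_i}\,t^{\#\{m>i:\,c_m>c_i\}}$ for row $i$, and over all $i$,
\[ \beta=\mathbf{z}^{\lambda}\,t^{\#\{(i,m)\,:\,i<m,\ c_i<c_m\}}. \]
Writing $\mathbf{c}=w\mathbf{c}_0$, so $c_i=\gamma_{w^{-1}(i)}$, the pair $(i,m)$ is counted precisely when $i<m$ and $w^{-1}(i)>w^{-1}(m)$, so the exponent of $t$ is the inversion number of $w^{-1}$, namely $\ell(w)$; thus $Z=\beta=t^{\ell(w)}\mathbf{z}^{\lambda}$.

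The one genuinely delicate point is the uniqueness in Step~1: it needs colour conservation, the one-colour-per-horizontal-edge constraint, and the partition inequality together to forbid an incoming colour from drifting right before it turns down. Step~2 is then pure bookkeeping with the monochrome weights.
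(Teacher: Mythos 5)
Your proof is correct and takes essentially the same route as the paper, whose proof simply asserts that when $\mathbf{d}=\mathbf{c}$ the system is monostatic with the explicitly computable state of Figure~\ref{fig:ground}. Your row-by-row induction for uniqueness and the monochrome bookkeeping giving $z_i^{\lambda_i}t^{\#\{m>i\,:\,c_m>c_i\}}$ per row (hence $t^{\ell(w)}\mathbf{z}^{\lambda}$ overall) just supply the details the paper leaves to the reader.
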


\begin{proof}
The assumption that $\mathbf{d}=\mathbf{c}$ guarantees that the system
$\mathfrak{S}_{\lambda,\mathbf{c},\mathbf{d}}(\mathbf{z};t)$ has a unique
state whose Boltzmann weight may be computed explicitly. See Figure~\ref{fig:ground}
for an example.
\end{proof}

In what follows, we use operators $\mathcal{L}_{i,t}$ from Section~\ref{sec:demazure}. Note that we use $t$ instead of $q$ to match the convention for the Hall-Littlewood polynomials. 

\begin{proposition}
\label{prop:demtrain}
Let $\mathbf{c},\mathbf{d}$ be a proper flags, and write $\mathbf{d}=w\mathbf{c}_0$
with $w\in W$.  Let $s=s_i$ be the simple reflection $(i,i+1)\in W$.
Then
\[Z(\mathfrak{S}_{\lambda,\mathbf{c},s_i\mathbf{d}}(\mathbf{z};t))=
\left\{
\begin{array}{cl}
\mathcal{L}_{i,t}Z(\mathfrak{S}_{\lambda,\mathbf{c},\mathbf{d}}(\mathbf{z};t)) &\text{if $s_iw>w$,}\\
\mathcal{L}_{i,t}^{-1}Z(\mathfrak{S}_{\lambda,\mathbf{c},\mathbf{d}}(\mathbf{z};t)) &\text{if $s_iw<w$.}
\end{array}\right.\]
\end{proposition}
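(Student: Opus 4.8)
The plan is to run Baxter's \emph{train argument}, just as in the proof that the uncolored partition function is symmetric, but now between rows $i$ and $i+1$ of the grid and with the right-hand flag varying. First I would start from the system $\mathfrak{S}_{\lambda,\mathbf{c},\mathbf{d}}(s_i\mathbf{z};t)$, so that rows $i$ and $i+1$ carry the spectral parameters $z_{i+1}$ and $z_i$, and attach the R-matrix with label $z_{i+1},z_i$ along those two rows on the \emph{left} of the grid, as in the top of Figure~\ref{fig:train}. Since both left boundary edges in rows $i$ and $i+1$ carry the vacuum color $+$, conservation of color forces the R-vertex into the unique state $(+,+,+,+)$, whose Boltzmann weight is $z_{i+1}-tz_i$ by the first entry of Figure~\ref{fig:rmatrix}. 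Hence the partition function of the augmented system equals $(z_{i+1}-tz_i)\,Z(\mathfrak{S}_{\lambda,\mathbf{c},\mathbf{d}}(s_i\mathbf{z};t))$.

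Next I would slide the R-matrix across all $N$ columns using the colored Yang-Baxter equation (Figure~\ref{fig:ybe}) once per column; this transposes the spectral parameters on rows $i$ and $i+1$, so the grid that remains is that of $\mathfrak{S}(\mathbf{z})$ with its standard ordering, and the R-matrix now sits on the \emph{right}, as in the bottom of Figure~\ref{fig:train}. Its outer (right) pair of edges is pinned to the boundary values $(d_i,d_{i+1})$, while its inner (left) pair is glued to the right boundary of the grid and must be summed over. Because the R-matrix meets only rows $i$ and $i+1$, the grid's right boundary agrees with $\mathbf{d}$ in all other rows; since colors are conserved and leave the grid only through the right boundary, the inner pair must carry the multiset $\{d_i,d_{i+1}\}$, hence it is either $(d_i,d_{i+1})$, in which case the grid is $\mathfrak{S}_{\lambda,\mathbf{c},\mathbf{d}}(\mathbf{z};t)$, or $(d_{i+1},d_i)$, in which case it is $\mathfrak{S}_{\lambda,\mathbf{c},s_i\mathbf{d}}(\mathbf{z};t)$. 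Writing $\mathbf{d}=w\mathbf{c}_0$, it is elementary that $s_iw>w$ if and only if $d_i>d_{i+1}$; in that case the two relevant R-weights, read off from Figure~\ref{fig:rvalues}, are $(1-t)z_{i+1}$ and $z_{i+1}-z_i$, so equating the two evaluations of the augmented partition function gives
\[
(z_{i+1}-tz_i)\,Z\bigl(\mathfrak{S}_{\lambda,\mathbf{c},\mathbf{d}}(s_i\mathbf{z};t)\bigr)
= (1-t)z_{i+1}\,Z\bigl(\mathfrak{S}_{\lambda,\mathbf{c},\mathbf{d}}(\mathbf{z};t)\bigr)
+ (z_{i+1}-z_i)\,Z\bigl(\mathfrak{S}_{\lambda,\mathbf{c},s_i\mathbf{d}}(\mathbf{z};t)\bigr).
\]

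Finally, writing $f(\mathbf{z})=Z(\mathfrak{S}_{\lambda,\mathbf{c},\mathbf{d}}(\mathbf{z};t))$, which is a genuine polynomial in $\mathbf{z}$ so that $Z(\mathfrak{S}_{\lambda,\mathbf{c},\mathbf{d}}(s_i\mathbf{z};t))=f(s_i\mathbf{z})$ literally, I would solve the displayed relation for $Z(\mathfrak{S}_{\lambda,\mathbf{c},s_i\mathbf{d}}(\mathbf{z};t))$ and compare with $\mathcal{L}_{i,t}$ applied to $f$. Since $\mathbf{z}^{\alpha_i}=z_iz_{i+1}^{-1}$ in type $A$, formula (\ref{eq:dldef}) reads $\mathcal{L}_{i,t}f(\mathbf{z})=\tfrac{(1-t)z_{i+1}}{z_i-z_{i+1}}f(\mathbf{z})+\tfrac{tz_i-z_{i+1}}{z_i-z_{i+1}}f(s_i\mathbf{z})$, and dividing the display by $z_{i+1}-z_i$ (using $-(z_{i+1}-tz_i)=tz_i-z_{i+1}$) reproduces exactly this, settling the case $s_iw>w$. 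For the case $s_iw<w$ I would apply the case already proved with $\mathbf{d}$ replaced by $\mathbf{d}'=s_i\mathbf{d}=(s_iw)\mathbf{c}_0$: then $s_i(s_iw)=w>s_iw$, so $Z(\mathfrak{S}_{\lambda,\mathbf{c},\mathbf{d}}(\mathbf{z};t))=\mathcal{L}_{i,t}Z(\mathfrak{S}_{\lambda,\mathbf{c},s_i\mathbf{d}}(\mathbf{z};t))$, and applying $\mathcal{L}_{i,t}^{-1}$ from (\ref{eq:dlinverse}) gives the asserted recursion. All of the conceptual work sits in the Yang-Baxter equation, which is already established, so I expect the main obstacle to be purely bookkeeping: orienting the R-vertex correctly in both panels of Figure~\ref{fig:train}, keeping straight which of its edges are fixed and which are summed after the slide, and --- the one genuine case split --- selecting from Figure~\ref{fig:rmatrix} the correct two weights for the right-hand R-vertex, which is governed by the sign of $d_i-d_{i+1}$, i.e.\ by whether $s_iw\gtrless w$.
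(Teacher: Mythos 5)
Your proposal is correct and is essentially the paper's own proof: the same train argument on the system $\mathfrak{S}_{\lambda,\mathbf{c},\mathbf{d}}(s_i\mathbf{z};t)$ with the $z_{i+1},z_i$ R-vertex attached on the left (unique all-$+$ state, weight $z_{i+1}-tz_i$), slid across by the colored Yang--Baxter equation to the right boundary where the two admissible states give the grids with flags $\mathbf{d}$ and $s_i\mathbf{d}$ and weights $(1-t)z_{i+1}$ and $z_{i+1}-z_i$, yielding the same three-term identity that rearranges to $\mathcal{L}_{i,t}$. Your explicit treatment of the case $s_iw<w$ by applying the proved case to $s_i\mathbf{d}$ and inverting is a small addition the paper leaves implicit, but the argument is the same.
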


\begin{proof}
To simplify the notation
we will write $\phi_w(\mathbf{z})=Z(\mathfrak{S}_{\lambda,\mathbf{c},w\mathbf{c}_0}(\mathbf{z};t)$,
so what we are trying to prove is that if $s_iw>w$ then $\phi_{s_iw}=\mathcal{L}_{i,t}\phi_w$.
We consider the system $\mathfrak{S}_{\lambda,\mathbf{c},\mathbf{d}}(s_i\mathbf{z};t)$
where the effect of the $s_i$ is to switch $z_i$ and $z_{i+1}$. We attach
the R-matrix to the left as in Figure~\ref{fig:train}. The meaning of the
assumption $s_iw>w$ is that the color $c_i=\gamma_{w^{-1}(i)}$ is
$>c_{i+1}$. These colors are red and blue, respectively in
Figure~\ref{fig:train}. Referring to the figure the R-matrix on
the top has only one possible state, while the R-matrix
at the bottom has two. The values
we need are in Figure~\ref{fig:rvalues}. We obtain
\[(z_{i+1}-tz_i)\phi_w(s_i\mathbf{z})=
(1-t)z_{i+1}\phi_w(\mathbf{z}) + (z_{i+1}-z_i)\phi_{w}(s_i\mathbf{z}).\]
Rearranging,
\[\phi_{s_iw}(\mathbf{z})=\frac{\phi_w(\mathbf{z})-\phi_{w}(s_i\mathbf{z})
-t(\phi_w(\mathbf{z})-\mathbf{z}^{\alpha_i}\phi_w(s_i\mathbf{z}))}
{\mathbf{z}^{\alpha_i}-1},\qquad \mathbf{z}^{\alpha_i}=z_i/z_{i+1},\]
which is $\mathcal{L}_{i,t}\phi_w(\mathbf{z})$.
\end{proof}

\begin{theorem}
  \label{thm:demeval}Let $w \in W$. Then
  \[ Z (\mathfrak{S}_{\lambda, \mathbf{c}_0, w\mathbf{c}_0}
     (\mathbf{z};t)) =\mathcal{L}_{w, t} \mathbf{z}^{\lambda} . \]
  More generally, if $w,y\in W$, then
  \[Z(\mathfrak{S}_{\lambda,y\mathbf{c}_0,w\mathbf{c}_0}(\mathbf{z};t))=
  \tau_{w,y}^\lambda(\mathbf{z};t)\]
  where $\tau_{w,y}^\lambda$ is as in Theorem~\ref{thm:taurecurse}.
\end{theorem}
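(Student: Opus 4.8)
The plan is to recognize the family of partition functions $w \mapsto Z(\mathfrak{S}_{\lambda, y\mathbf{c}_0, w\mathbf{c}_0}(\mathbf{z};t))$ as the unique family $\tau^\lambda_{w,y}$ characterized in Theorem~\ref{thm:taurecurse}. Fix $y \in W$. Since $\mathbf{c}_0$ is a proper flag, both $y\mathbf{c}_0$ and $w\mathbf{c}_0$ are proper for every $w$, so Propositions~\ref{prop:monostatic} and~\ref{prop:demtrain} apply to all of these systems. Moreover each such partition function is a finite sum of products of Boltzmann weights from Figure~\ref{fig:monochrome}, hence a polynomial in $t$ and in the $z_i$ (a Laurent polynomial in the $z_i$ once we allow dominant $\lambda$ with negative parts, by Remark~\ref{rem:dominantpf}); in particular it lies in $\mathcal{O}_t(\hat{T})$, so it is an admissible candidate for $\tau^\lambda_{w,y}$.

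First I would establish the base case $w = y$: by Proposition~\ref{prop:monostatic} the system $\mathfrak{S}_{\lambda, y\mathbf{c}_0, y\mathbf{c}_0}(\mathbf{z};t)$ is monostatic with partition function $t^{\ell(y)}\mathbf{z}^\lambda$, which is exactly the prescribed value $\tau^\lambda_{y,y}(\mathbf{z};t)$. Next I would check the recursion. Applying Proposition~\ref{prop:demtrain} with $\mathbf{c} = y\mathbf{c}_0$ and $\mathbf{d} = w\mathbf{c}_0$, and noting that $s_i\mathbf{d} = (s_iw)\mathbf{c}_0$, gives
\[
Z(\mathfrak{S}_{\lambda, y\mathbf{c}_0, (s_iw)\mathbf{c}_0}(\mathbf{z};t)) =
\begin{cases}
\mathcal{L}_{i,t}\, Z(\mathfrak{S}_{\lambda, y\mathbf{c}_0, w\mathbf{c}_0}(\mathbf{z};t)) & \text{if } s_iw > w,\\
\mathcal{L}_{i,t}^{-1}\, Z(\mathfrak{S}_{\lambda, y\mathbf{c}_0, w\mathbf{c}_0}(\mathbf{z};t)) & \text{if } s_iw < w,
\end{cases}
\]
which is precisely the recursion~(\ref{eq:sigmarecurse}). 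By the uniqueness assertion of Theorem~\ref{thm:taurecurse}, it follows that $Z(\mathfrak{S}_{\lambda, y\mathbf{c}_0, w\mathbf{c}_0}(\mathbf{z};t)) = \tau^\lambda_{w,y}(\mathbf{z};t)$ for all $w, y \in W$. Finally, specializing $y = 1$ and using the closed form $\tau^\lambda_{w,1} = t^{\ell(1)}\mathcal{L}_{w,t}\mathcal{L}_{1,t}^{-1}\mathbf{z}^\lambda = \mathcal{L}_{w,t}\mathbf{z}^\lambda$ from the proof of Theorem~\ref{thm:taurecurse} recovers the first displayed identity.

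I do not expect a serious obstacle here: the substantive work is already packaged inside Propositions~\ref{prop:monostatic} and~\ref{prop:demtrain}, and what remains is to invoke uniqueness. The only points requiring care are bookkeeping ones: confirming that, with the conventions in force, the flag $w\mathbf{c}_0$ is indexed by $w$ in a way compatible with both propositions; that the sign conditions $s_iw > w$ versus $s_iw < w$ appearing in Proposition~\ref{prop:demtrain} match those in~(\ref{eq:sigmarecurse}); and that the base point of the recursion is taken at $w = y$ (rather than $w = 1$), so that Proposition~\ref{prop:monostatic}, not a monomial computation at the identity, supplies the initial value.
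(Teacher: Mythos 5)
Your proof is correct and follows essentially the same route as the paper, which deduces Theorem~\ref{thm:demeval} directly from Proposition~\ref{prop:monostatic} (base case $w=y$), Proposition~\ref{prop:demtrain} (the $\mathcal{L}_{i,t}$ recursion in the right flag), and the uniqueness statement of Theorem~\ref{thm:taurecurse}. Your explicit bookkeeping checks (that $s_i(w\mathbf{c}_0)=(s_iw)\mathbf{c}_0$ and that the base point is $w=y$) are exactly the points the paper leaves implicit in its one-line proof.
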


\begin{proof}
  This follows immediately from the last two propositions and Theorem~\ref{thm:taurecurse}.
\end{proof}

We note that
$\mathcal{L}_{w,t}\mathbf{z}^\lambda$, up to a scalar multiple, matches the 
non-symmetric Hall-Littlewood polynomial $E_{w(\lambda)}(\infty, t)$
in the notation of \cite{IonStandard}, Corollary~3.8.

\section{The local lifting property\label{sec:local_lifting}}


\begin{figure}[htb]
\[\begin{array}{|c|c||c|}
\hline
\multicolumn{2}{|c||}
{\text{before applying $p''$}} & \text{after $p''$}\\
\hline
\begin{tikzpicture}[scale=.6,every node/.style={scale=.7}]
\draw[blue, line width=.5mm] (0,1)--(4,1);
\draw[blue, line width=.5mm] (1,0)--(1,2);
\draw[red, line width=.5mm] (3,0)--(3,2);
\node at (.7,0) {$n$};
\node at (.7,1.8) {$n$};
\node at (2.6,0) {$m$};
\node at (2.6,1.8) {$m$};
\node at (1,2.5) {$c_i$};
\node at (3,2.5) {$c_{i+1}$};
\end{tikzpicture} 
&
\begin{tikzpicture}[scale=.6,every node/.style={scale=.6}]
\draw[blue, line width=.5mm] (0,1)--(1,1);
\draw[semithick] (1,1)--(3,1);
\draw[red, line width=.5mm] (3,1)--(4,1);
\draw[blue, line width=.5mm] (1,0)--(1,2);
\draw[red, line width=.5mm] (3,0)--(3,2);
\node at (.6,1.8) {$n$};
\node at (.4,0) {$n\!+\!1$};
\node at (2.6,1.8) {$m$};
\node at (2.4,0) {$m\!-\!1$};
\node at (1,2.5) {$c_i$};
\node at (3,2.5) {$c_{i+1}$};
\end{tikzpicture}
&
\begin{tikzpicture}[scale=.6,every node/.style={scale=.6}]
\draw (1,0)--(1,2);
\draw[red, line width=.5mm] (3,0)--(3,2);
\draw[red, line width=.5mm] (0,1)--(4,1);
\node at (2.4,0) {$n\!+\!m$};
\node at (2.4,1.8) {$n\!+\!m$};
\node at (.6,1.8) {$0$};
\node at (1,2.5) {$c_i$};
\node at (3,2.5) {$c_{i+1}$};
\end{tikzpicture}\\
\hline
zt^m & z(1-t^m) & z\\
\hline
\multicolumn{2}{|c||}
{\begin{tikzpicture}[scale=.6,every node/.style={scale=.7}]
\draw[red, line width=.5mm] (0,1)--(4,1);
\draw[blue, line width=.5mm] (1,0)--(1,2);
\draw[red, line width=.5mm] (3,0)--(3,2);
\node at (.7,0) {$n$};
\node at (.7,2) {$n$};
\node at (2.6,0) {$m$};
\node at (2.6,2) {$m$};
\end{tikzpicture}} 
&
\begin{tikzpicture}[scale=.6,every node/.style={scale=.6}]
\draw (1,0)--(1,2);
\draw[red, line width=.5mm] (3,0)--(3,2);
\draw[red, line width=.5mm] (0,1)--(4,1);
\node at (.6,1.8) {$0$};
\node at (2.4,0) {$n\!+\!m$};
\node at (2.4,2) {$n\!+\!m$};
\end{tikzpicture}
\\
\hline
\multicolumn{2}{|c||}{z} & z\\
\hline
\multicolumn{2}{|c||}
{\begin{tikzpicture}[scale=.6,every node/.style={scale=.7}]
\draw[darkgreen, line width=.5mm] (0,1)--(4,1);
\draw[blue, line width=.5mm] (1,0)--(1,2);
\draw[red, line width=.5mm] (3,0)--(3,2);
\node at (.7,0) {$n$};
\node at (.7,2) {$n$};
\node at (2.6,0) {$m$};
\node at (2.6,2) {$m$};
\end{tikzpicture}} 
&
\begin{tikzpicture}[scale=.6,every node/.style={scale=.6}]
\draw (1,0)--(1,2);
\draw[red, line width=.5mm] (3,0)--(3,2);
\draw[darkgreen, line width=.5mm] (0,1)--(4,1);
\node at (0.4,2) {$0$};
\node at (2.4,0) {$n\!+\!m$};
\node at (2.4,2) {$n\!+\!m$};
\end{tikzpicture}
\\
\hline
\multicolumn{2}{|c||}
{\begin{array}{cl}
t^{n+m} & \text{if ${\green G} < \red{R},\blue{B}$,}\\
1 & \text{if ${\green G} > \red{R},\blue{B}$}\end{array}} &
{\begin{array}{cl}
t^{n+m} & \text{if ${\green G} < \red{R},\blue{B}$,}\\
1 & \text{if ${\green G} > \red{R},\blue{B}$}\end{array}}
\\
\hline
\multicolumn{2}{|c||}
{\begin{tikzpicture}[scale=.6,every node/.style={scale=.7}]
\draw[blue, line width=.5mm] (0,1)--(1,1);
\draw[semithick] (1,1)--(4,1);
\draw[blue, line width=.5mm] (1,0)--(1,2);
\draw[red, line width=.5mm] (3,0)--(3,2);
\node at (.4,0) {$n\!+\!1$};
\node at (.7,2) {$n$};
\node at (2.6,0) {$m$};
\node at (2.6,2) {$m$};
\end{tikzpicture}} 
&
\begin{tikzpicture}[scale=.6,every node/.style={scale=.6}]
\draw (1,0)--(1,2);
\draw[red, line width=.5mm] (3,0)--(3,2);
\draw[semithick] (1,1)--(4,1);
\draw[red, line width=.5mm] (0,1)--(3,1);
\node at (0.6,0) {$0$};
\node at (2.2,0) {$n\!+\!m\!+\!1$};
\node at (2.4,2) {$n\!+\!m$};
\end{tikzpicture}
\\\hline
\multicolumn{2}{|c||}
{z(1-t)} & {z(1-t)}\\
\hline
\multicolumn{2}{|c||}
{\begin{tikzpicture}[scale=.6,every node/.style={scale=.7}]
\draw[red, line width=.5mm] (0,1)--(3,1);
\draw[semithick] (3,1)--(4,1);
\draw[blue, line width=.5mm] (1,0)--(1,2);
\draw[red, line width=.5mm] (3,0)--(3,2);
\node at (.7,0) {$n$};
\node at (.7,2) {$n$};
\node at (2.4,0) {$m\!+\!1$};
\node at (2.6,2) {$m$};
\end{tikzpicture}} 
&
\begin{tikzpicture}[scale=.6,every node/.style={scale=.6}]
\draw (1,0)--(1,2);
\draw[red, line width=.5mm] (3,0)--(3,2);
\draw[semithick] (1,1)--(4,1);
\draw[red, line width=.5mm] (0,1)--(3,1);
\node at (0.6,0) {$0$};
\node at (2.4,2) {$n\!+\!m$};
\node at (2.2,0) {$n\!+\!m\!+1$};
\end{tikzpicture}
\\\hline
\multicolumn{2}{|c||}
{z(1-t)} & {z(1-t)}\\
\hline
\begin{tikzpicture}[scale=.6,every node/.style={scale=.7}]
\draw[semithick] (0,1)--(1,1);
\draw[blue, line width=.5mm] (1,1)--(4,1);
\draw[blue, line width=.5mm] (1,0)--(1,2);
\draw[red, line width=.5mm] (3,0)--(3,2);
\node at (.4,0) {$n\!-\!1$};
\node at (.6,2) {$n$};
\node at (2.6,0) {$m$};
\node at (2.6,2) {$m$};
\end{tikzpicture} 
&
\begin{tikzpicture}[scale=.6,every node/.style={scale=.7}]
\draw[semithick] (0,1)--(3,1);
\draw[red, line width=.5mm] (3,1)--(4,1);
\draw[blue, line width=.5mm] (1,0)--(1,2);
\draw[red, line width=.5mm] (3,0)--(3,2);
\node at (.7,0) {$n$};
\node at (.7,2) {$n$};
\node at (2.4,0) {$m\!-\!1$};
\node at (2.6,2) {$m$};
\end{tikzpicture} 
&
\begin{tikzpicture}[scale=.6,every node/.style={scale=.6}]
\draw (1,0)--(1,2);
\draw[red, line width=.5mm] (3,0)--(3,2);
\draw[red, line width=.5mm] (3,1)--(4,1);
\draw[semithick] (0,1)--(3,1);
\node at (0.6,0) {$0$};
\node at (2.2,0) {$n\!+\!m$};
\node at (2.4,2) {$n\!+\!m$};
\end{tikzpicture}\\
\hline
\frac{1-t^n}{1-t}t^m & \frac{1-t^m}{1-t} & \frac{1-t^{n+m}}{1-t}\\
\hline
\multicolumn{2}{|c||}
{\begin{tikzpicture}[scale=.6,every node/.style={scale=.7}]
\draw[semithick] (0,1)--(4,1);
\draw[red, line width=.5mm] (3,0)--(3,2);
\draw[blue, line width=.5mm] (1,0)--(1,2);
\node at (.6,0) {$n$};
\node at (.6,2) {$n$};
\node at (2.6,0) {$m$};
\node at (2.6,2) {$m$};
\end{tikzpicture}} 
&
\begin{tikzpicture}[scale=.6,every node/.style={scale=.6}]
\draw (1,0)--(1,2);
\draw[red, line width=.5mm] (3,0)--(3,2);
\draw[semithick] (0,1)--(4,1);
\node at (0.6,0) {$0$};
\node at (2.4,0) {$n\!+\!m$};
\node at (2.4,2) {$n\!+\!m$};
\end{tikzpicture}\\
\hline
\multicolumn{2}{|c||} 1 & 1\\
\hline
\end{array}\]
\caption{Verifying the local lifting property for the maps $p''_{i,\h}$ and $p''_{i,\v}$. The figure shows the \textit{unfused} vertices in the columns of the monochrome edges for $\gamma_i$, $\gamma_{i+1}$. The monochrome vertex for the color $\gamma_{i+1}$ (blue) is on the left, and the monochrome vertex for the color $\gamma_i$ (red) is on the right. The maps $p_{i,\h}''$ and $p_{i+i,\v}''$ replace the color $\gamma_{i+1}$ with $\gamma_i$, and the case-by-case analysis confirms that this procedure does not affect the Boltzmann weights. In the third row, green is any color that is not $\gamma_i$ or $\gamma_{i+1}$.} 
\label{fig:local_lifting}
\end{figure}

In this section we will prove the local lifting property for the $R$-weights,
which is a relationship between the colored and uncolored models.
In~\cite{BorodinWheelerColored}, Proposition~2.4.2, this fact is
called \textit{color-blindness}.
The local lifting property is the analog of Properties~A and~B in
{\cite{BBBGIwahori}}, Section~8. As in that paper, the local lifting property
has implications for the (global) partition functions. In particular, we will
here apply it to reprove Korff's evaluation of the uncolored lattice models.

The $P$-weights also have a lifting property, but it is less strictly local,
since it concerns column transfer matrices instead of individual vertices. The
local lifting property as formulated here is strictly for the $R$- weights.

To formulate the local lifting property for the $R$-weights, we make use of
both the (fused) colored and the uncolored spinsets for horizontal and
vertical edges. We define maps $p_{\h} : \Sigma^{\col}_{\h}
\longrightarrow \Sigma^{\unc}_{\h}$ and $p_{\v} :
\Sigma^{\col}_{\v} \longrightarrow
\Sigma^{\unc}_{\v}$. Specifically, $p_{\h} :
\Sigma^{\col}_{\h} \longrightarrow \Sigma^{\unc}_{\h}$
is the map
\[ p_{\h} (+) = +, \qquad p_{\h} (c_i) = -, \]
and $p_{\v} : \Sigma^{\col}_{\v} \longrightarrow
\Sigma^{\unc}_{\v}$ is the map that sends
\[ \text{$\gamma_1^{m_1} \cdots \gamma_r^{m_r} \in \Sigma^{\col}_{\v}
   \quad$to} \quad \sum_{i = 1}^r m_i \in \Sigma_{\h}^{\unc} . \]
Intuitively, these maps just change every colored boson to the uncolored boson
represented by $-$ in the uncolored model.

\begin{proposition}
  \label{prop:loclif}
  Let $(A, B, C, D) \in \Sigma_{\h}^{\unc} \times
  \Sigma_{\v}^{\unc} \times \Sigma_{\h}^{\unc} \times
  \Sigma_v^{\unc}$, and let $(a, b) \in \Sigma_{\h}^{\col}
  \times \Sigma_{\v}^{\col}$ such that $p_{\h} (a) = A$ and
  $p_{\v} (b) = B$. Then

\begin{equation}
\label{eq:loclif}
\beta_R^{\unc}\left(\vcenter{\hbox to 80pt{
\begin{tikzpicture}
\draw (1,0)--(1,2);
\draw (0,1)--(2,1);
\draw[fill=white] (1,0) circle (.3);
\draw[fill=white] (1,2) circle (.3);
\draw[fill=white] (0,1) circle (.3);
\draw[fill=white] (2,1) circle (.3);
\node at (0,1) {$A$};
\node at (1,2) {$B$};
\node at (2,1) {$C$}; 
\node at (1,0) {$D$}; 
\end{tikzpicture}}}\right) =
\sum_{\substack{(c,d)\in\Sigma_{\h}^{\col}\times\Sigma_{\v}^{\col}\\
p_{\h}(c)=C\\p_{\v}(d)=D}}
\beta_R^{\col}\left(\vcenter{\hbox to 80pt{
\begin{tikzpicture}
\draw (1,0)--(1,2);
\draw (0,1)--(2,1);
\draw[fill=white] (1,0) circle (.3);
\draw[fill=white] (1,2) circle (.3);
\draw[fill=white] (0,1) circle (.3);
\draw[fill=white] (2,1) circle (.3);
\node at (0,1) {$a$};
\node at (1,2) {$b$};
\node at (2,1) {$c$}; 
\node at (1,0) {$d$}; 
\end{tikzpicture}}}\right) .
\end{equation}
\end{proposition}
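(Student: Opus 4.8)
The strategy is to unfold both sides of (\ref{eq:loclif}) into products of monochrome weights and then to collapse the colors one at a time. By the fusion definition of the colored Boltzmann weights (Figure~\ref{fig:fusion}), $\beta_R^{\col}$ evaluated at a fused vertex with outer spins $(a,b,c,d)$ is the product of the $r$ monochrome weights of Figure~\ref{fig:monochrome} over the unique admissible state of the $r$-column monochrome strip for the colors $\gamma_r,\dots,\gamma_1$ (in this order from left to right). Hence, fixing the left data $(a,b)$ with $p_\h(a)=A$, $p_\v(b)=B$ and summing over all $(c,d)$ with $p_\h(c)=C$, $p_\v(d)=D$, the right-hand side of (\ref{eq:loclif}) is precisely the partition function of this monochrome strip whose left horizontal boundary is $a$ and whose top vertical boundary is determined by $b$, while its right horizontal and bottom vertical boundaries are constrained only to lie in $p_\h^{-1}(C)$ and $p_\v^{-1}(D)$ respectively. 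It therefore suffices to show that this partition function equals $\beta_R^{\unc}(A,B,C,D)$; in particular the statement includes the assertion that it is independent of the chosen lift $(a,b)$.

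Next I would introduce, for $1\le i\le r-1$, the elementary merge $p''_i$ that renames $\gamma_{i+1}$ to $\gamma_i$; on the pair of adjacent monochrome columns for $\gamma_{i+1}$ (left) and $\gamma_i$ (right) its effect is to push all $\gamma_{i+1}$-bosons onto the $\gamma_i$-column, leaving the $\gamma_{i+1}$-column in the trivial state with vertical spin $0$. The content of Figure~\ref{fig:local_lifting} is the local identity: for each fixed outer boundary of this two-column block, summing the product of the two monochrome weights over the one or two admissible values of the internal horizontal edge equals the weight of the block after $p''_i$ is applied. This is a finite case check whose only nontrivial input is the telescoping identity $\sum_{j=1}^{k}(1-t^{m_j})\,t^{m_1+\cdots+m_{j-1}}=1-t^{m_1+\cdots+m_k}$: it converts the two contributions in which a boson is released in the left ($C(\cdot)$-type) column and reabsorbed in the right ($D(\cdot)$-type) column, versus the through-color passing straight across, into a single weight of type $B(\cdot)$, $C(\cdot)$ or $D(\cdot)$. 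Here one uses that the $R$-weights of $C(n)$ and $D(n)$ are $z_i(1-t)$ and $\tfrac{1-t^{n+1}}{1-t}$, which is exactly why the property is stated for the $R$-weights only.

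Finally, I would apply $p''_{r-1},p''_{r-2},\dots,p''_1$ in this order. This collapses the monochrome strip so that only the rightmost column, for $\gamma_1$, carries bosons, while the remaining $r-1$ columns are all trivial; a foreign color passing through a trivial column contributes weight $1$ regardless of the color comparison, since the column's vertical spin is $0$. Under the identification $\gamma_1\leftrightarrow-$ (and $+\leftrightarrow+$) the surviving single-column weight is exactly $\beta_R^{\unc}(A,B,C,D)$, while the composite of the horizontal merges realizes $p_\h$ and the composite of the vertical merges realizes $p_\v$. Since each merge preserves the summed Boltzmann weight by the previous paragraph, the full summed colored weight equals the uncolored weight, which is (\ref{eq:loclif}). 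The substance of the argument is the bookkeeping rather than any single idea: tracking which column each boson occupies, checking that $p''_i$ touches only its two columns so that the merges can be performed independently, verifying that the iterated merges compose to $p_\h$ and $p_\v$, and carrying out the case analysis of Figure~\ref{fig:local_lifting} in full, including the degenerate cases where the through-color coincides with a column color and the case of a third color (drawn green in the figure).
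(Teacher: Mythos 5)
Your proposal is correct and follows essentially the same route as the paper: unfold into the monochrome (unfused) weights, factor the color-forgetting map into elementary merges $p''_i$ sending $\gamma_{i+1}\mapsto\gamma_i$, verify the two-column local identity case by case (the content of Figure~\ref{fig:local_lifting}, with the same key identities such as $zt^m+z(1-t^m)=z$ and $\tfrac{1-t^n}{1-t}t^m+\tfrac{1-t^m}{1-t}=\tfrac{1-t^{n+m}}{1-t}$), compose the merges, and identify the one-color model with the uncolored $R$-weights. The only difference is organizational: the paper first identifies the uncolored weights with the one-color colored weights and then lifts through $p'=p''_1\circ\cdots\circ p''_{r-1}$, whereas you merge down from the fully colored strip and make that identification at the end — the substance is identical.
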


\begin{proof}
  First, we note that the uncolored $R$-weights actually coincide with the
  colored $R$-weights using only one color $c$. So we may work instead with
  maps $p_{\h}' : \Sigma^{\col}_{\h} \longrightarrow
  \Sigma^{\col}_{\h}$ and $p_{\v}' :
  \Sigma_{\v}^{\col} \longrightarrow
  \Sigma_{\v}^{\col}$ that replace every color by $\gamma_1$. Thus
  \[ p_{\h}' (+) = +, \qquad p_{\h}' (\gamma_i) = \gamma_1, \]
  and $p'_{\v}$
  \[ \text{$\gamma_1^{m_1} \cdots \gamma_r^{m_r} \in \Sigma^{\col}_{\v}
     \quad$to} \quad \gamma_1^m \in \Sigma_{\h}^{\unc}, \qquad m =
     \sum_{i = 1}^r c. \]
  We may verify by comparing the weights in Figure~\ref{fig:uncolored_weights}
  with the colored weights using only one color that if $(a, b, c, d) \in
  \Sigma_{\h}^{\col} \times \Sigma_{\v}^{\col} \times
  \Sigma_{\h}^{\col} \times \Sigma_v^{\col}$ then the
  Boltzmann weights of $\left( p_{\h} (a), p_{\v} (b),
  p_{\h} (c), p_{\text{} v} (d) \right) \in
  \Sigma_{\h}^{\unc} \times \Sigma_{\v}^{\unc} \times
  \Sigma_{\h}^{\unc} \times \Sigma_v^{\unc}$ and $\left(
  p'_{\h} (a), p'_{\v} (b), p'_{\h} (c), p'_{\text{} v} (d)
  \right) \in \Sigma_{\h}^{\col} \times
  \Sigma_{\v}^{\col} \times \Sigma_{\h}^{\col} \times
  \Sigma_v^{\col}$ are the same. (To confirm this, observe that in the
  unfused weights corresponding to the fused weights $p'_{\v} (b)$ and
  $p'_{\v} (d)$ we may optionally discard the monochrome vertices for
  colors $\gamma_i$ with $i \neq 1$, since the Boltzmann weights at these vertices
  all equal~1.)
  
  Now $p'$ maps may themselves be factored into maps that change only one
  color at a time. Thus we consider the maps 
  $p_{i, \h}' : \Sigma^{\col}_{\h} \longrightarrow
  \Sigma^{\col}_{\h}$ and 
  $p_{i, \v}'' : \Sigma_{\v}^{\col} \longrightarrow \Sigma_{\v}^{\col}$ that 
  replace the color $c_{i + 1}$ by $\gamma_i$,
  and leave the other colors $c_j$ unaltered.
  

  The maps $p_{i, \h}''$ and $p''_{i, \v}$ have the following local
  lifting property (see Figure~\ref{fig:local_lifting} for illustration): 
  if $(A, B, C, D) \in \Sigma_{\h}^{\col} \times
  \Sigma_{\v}^{\col} \times \Sigma_{\h}^{\col} \times
  \Sigma_{\v}^{\col}$ and $(a, b) \in
  \Sigma_{\h}^{\col} \times \Sigma_{\v}^{\col}$ such
  that $p''_{i, \h} (a) = A$ and $p''_{i, \v} (b) = B$, then
\[\beta_R^{\col}\left(\vcenter{\hbox to 80pt{
\begin{tikzpicture}
\draw (1,0)--(1,2);
\draw (0,1)--(2,1);
\draw[fill=white] (1,0) circle (.3);
\draw[fill=white] (1,2) circle (.3);
\draw[fill=white] (0,1) circle (.3);
\draw[fill=white] (2,1) circle (.3);
\node at (0,1) {$A$};
\node at (1,2) {$B$};
\node at (2,1) {$C$}; 
\node at (1,0) {$D$}; 
\end{tikzpicture}}}\right) =
\sum_{\substack{(c,d)\in\Sigma_{\h}^{\col}\times\Sigma_{\v}^{\col}\\
p''_{i,\h}(c)=C\\p''_{i,\v}(d)=D}}
\beta_R^{\col}\left(\vcenter{\hbox to 80pt{
\begin{tikzpicture}
\draw (1,0)--(1,2);
\draw (0,1)--(2,1);
\draw[fill=white] (1,0) circle (.3);
\draw[fill=white] (1,2) circle (.3);
\draw[fill=white] (0,1) circle (.3);
\draw[fill=white] (2,1) circle (.3);
\node at (0,1) {$a$};
\node at (1,2) {$b$};
\node at (2,1) {$c$}; 
\node at (1,0) {$d$}; 
\end{tikzpicture}}}\right) . \]
  
  Now $p'_{\h} = p''_{1, \h} \circ \ldots \circ p''_{r - 1,
  \h}$ and $p'_{\v} = p''_{1, \v} \circ \ldots \circ p''_{r
  - 1, \v}$ has a local lifting property. Replacing the color $c_1$ by
  $+$ and (in the monochrome model) deleting the vertices labeled $c_2,
  \cdots, c_r$ we obtain the local lifting property for $p_{\h}$ and
  $p_{\v}$.
\end{proof}

The local lifting result implies the following \textit{global lifting}
property for partition functions. Using the maps $p_{\h}$ and $p_{\v}$
we may construct a map $\mathfrak{s}\mapsto p(\mathfrak{s})$ from
states of the colored model $\mathfrak{S}_{\lambda,\mathbf{c},\mathbf{d}}(\mathbf{z};t)$,
to the uncolored model $\mathfrak{S}_\lambda(\mathbf{z};t)$.
We simply apply $p_{\h}$ and $p_{\v}$ to the spins of every edge in $\mathfrak{s}$,
obtaining a state of $\mathfrak{S}_\lambda(\mathbf{z};t)$.

If $\mathbf{c}$ is a flag let
\[\mathfrak{S}_{\lambda,\mathbf{c}}^{\col}(\mathbf{z};t)=\bigsqcup_{\mathbf{d}}
\mathfrak{S}_{\lambda,\mathbf{c},\mathbf{d}}^{\col}(\mathbf{z};t).\]
Thus the set $\mathfrak{S}_{\lambda,\mathbf{c}}^{\col}(\mathbf{z};t)$
is the disjoint union of all states with prescribed flag on the upper
boundary, but arbitrary flag on the right boundary.

\begin{proposition}
\label{prop:glolif}
Let $\mathfrak{s}_0$ be a state of $\mathfrak{S}_\lambda(\mathbf{z};t)$.
Let $\mathbf{c}$ be a flag.
\[\beta_R^{\unc}(\mathfrak{s}_0)=
\sum_{\substack{s\in\mathfrak{S}^{\col}_{\lambda,\mathbf{c}}(\mathbf{z};t)\\
p(\mathbf{s})=\mathfrak{s}_0}}\beta_R^{\col}(\mathbf{s}).\]
\end{proposition}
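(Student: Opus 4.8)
The plan is to derive the global lifting property (Proposition~\ref{prop:glolif}) from the local lifting property (Proposition~\ref{prop:loclif}) by an inductive column-by-column or vertex-by-vertex argument, exactly parallel to the passage from Properties~A/B to the global statement in~\cite{BBBGIwahori}. First I would fix the state $\mathfrak{s}_0$ of the uncolored model $\mathfrak{S}_\lambda(\mathbf{z};t)$; it assigns a spin to every edge of the $r\times N$ grid. The claim is that summing $\beta_R^{\col}(\mathbf{s})$ over all colored states $\mathbf{s}\in\mathfrak{S}^{\col}_{\lambda,\mathbf{c}}(\mathbf{z};t)$ lying over $\mathfrak{s}_0$ (i.e.\ with $p(\mathbf{s})=\mathfrak{s}_0$) reproduces $\beta_R^{\unc}(\mathfrak{s}_0)$. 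The key structural observation is that the boundary conditions of the colored model are compatible with $p$: the left boundary spins $\plus$ map to $\plus$, the right boundary spins $d_i$ (a color) map to $\minus$ as required by $\mathfrak{s}_0$'s right boundary (all $\minus$), the bottom spins $\boson{0}$ map to $\boson{0}$, and the top spin in column $j$, namely $\gamma_1^{m_1}\cdots\gamma_r^{m_r}$ with $\sum m_k$ equal to the number of parts of $\lambda$ equal to $j$, maps under $p_{\v}$ to $\boson{m_j}$, matching the top boundary of $\mathfrak{S}_\lambda$. So every colored state over $\mathfrak{s}_0$ automatically has prescribed top flag $\mathbf{c}$ and prescribed bottom/left boundary, while its right flag $\mathbf{d}$ is unconstrained — which is exactly why the right-hand side ranges over $\mathfrak{S}^{\col}_{\lambda,\mathbf{c}}=\bigcup_{\mathbf d}\mathfrak{S}^{\col}_{\lambda,\mathbf c,\mathbf d}$.

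Next I would run the induction. Order the vertices of the grid so that we process them one at a time, e.g.\ proceeding along rows from the left boundary inward (so that at each step the edges to the left and above the current vertex already have their colored spins determined, while the edges to the right and below are still free). At each vertex, the ``incoming'' colored spins $(a,b)$ on the left and top edges are fixed and satisfy $p_{\h}(a)=A$, $p_{\v}(b)=B$ where $(A,B)$ are the corresponding uncolored spins dictated by $\mathfrak{s}_0$; Proposition~\ref{prop:loclif} then says precisely that $\beta_R^{\unc}$ at that uncolored vertex equals the sum of $\beta_R^{\col}$ over the compatible choices of outgoing colored spins $(c,d)$ with $p_{\h}(c)=C$, $p_{\v}(d)=D$. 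Multiplying these local identities over all $r N$ vertices and using the multiplicativity of Boltzmann weights, the product over vertices of $\beta_R^{\unc}$ (which is $\beta_R^{\unc}(\mathfrak{s}_0)$) equals the sum over all consistent assignments of colored spins to the interior edges of the product of local colored weights. A consistent assignment of colored spins to interior edges, together with the fixed boundary, is exactly a colored state $\mathbf{s}$ lying over $\mathfrak{s}_0$, and the product of its local weights is $\beta_R^{\col}(\mathbf{s})$; summing over them gives the right-hand side. One should check that the telescoping of the local sums is legitimate — i.e.\ that the compatibility condition "$p$ of the outgoing spin equals the uncolored outgoing spin" at one vertex is the same as "$p$ of the incoming spin equals the uncolored incoming spin" at the adjacent vertex — which is immediate since an interior edge is shared between exactly two vertices and $\mathfrak{s}_0$ assigns it a single uncolored spin.

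The main obstacle, and the only place real care is needed, is bookkeeping the order of summation so that the fixed-versus-free dichotomy of colored spins is maintained throughout the induction, and in particular handling the vertical edges, since the colored vertical spinset $\Sigma^{\col}_{\v}$ is infinite (multisets of colors) rather than a single boson count. One must verify that for a given $\mathfrak{s}_0$ the set of colored states over it is finite (so the sum is well-defined): this follows because along any vertical line the uncolored occupation numbers $B$ are bounded by $\mathfrak{s}_0$, and each colored refinement distributes at most $\sum m_i = B$ bosons among $r$ colors, hence only finitely many choices. The compatibility of boundary conditions noted above must be stated carefully for the right boundary: since each color occurs equally often in $\mathbf{c}$ and in any admissible $\mathbf{d}$, and $\mathfrak{s}_0$'s right boundary is all $\minus$, every colored state over $\mathfrak{s}_0$ does have a well-defined right flag $\mathbf d$, and conversely. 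Once this is set up, the argument is a routine induction on the number of vertices, and I would present it as such, referring to Figure~\ref{fig:local_lifting} and to the analogous argument in~\cite{BBBGIwahori} for the fermionic case.
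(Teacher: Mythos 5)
Your argument is correct and is essentially the paper's own proof: the paper likewise constructs the liftings of $\mathfrak{s}_0$ by visiting vertices row by row from left to right, applying Proposition~\ref{prop:loclif} at each vertex with the left/top spins already determined and summing over the compatible right/bottom spins, deferring the formal telescoping to Lemma~8.5 of~\cite{BBBGIwahori}. Your additional checks (boundary compatibility under $p$ and finiteness of the fiber over $\mathfrak{s}_0$) are details the paper leaves implicit, and they are stated correctly.
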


\begin{proof}
A formal proof follows the method of Lemma~8.5 of~\cite{BBBGIwahori}, and
we refer to that proof for details, giving here a brief intuitive explanation.
We may construct all the liftings $\mathfrak{s}$ of the state $\mathfrak{s}_0$
to $\mathfrak{S}_{\lambda,\mathbf{c}}^{\col}(\mathbf{z};t)$ algorithmically
by assigning colored spins to the edges right and below each vertex, visiting
the vertices in order, row by row, from left to right. In this procedure,
as each vertex is considered, the spins to the left and above the vertex are
known either because of the condition that 
$\mathfrak{s}\in\mathfrak{S}_{\lambda,\mathbf{c}}^{\col}(\mathbf{z};t)$,
which determines the left and top boundary spins, or because the spins
above and left of the vertex have already been assigned at an earlier
stage of the algorithm. Thus at the vertex under consideration if the
spins of the adjacent edges are labeled $a,b,c,d$ as in Proposition~\ref{prop:loclif},
the vertices $a$, $b$ are known, while $c$ and $d$ must satisfy
$p_{\h}(c)=C$, $p_{\v}(d)=D$, where $C$ and $D$ are the corresponding
spins of the state $\mathfrak{s}_0$, due to the requirement that
$p(\mathfrak{s})=\mathfrak{s}_0$. We see that the possible values for
the spins at the given vertex are exactly those on the right-hand side
of (\ref{eq:loclif}), and taking this into account, the sum of the
Boltzmann weights of the possible configurations at the vertex equals
the Boltzmann weight of the vertex in the corresponding uncolored
state $\mathfrak{s}_0$. The result follows from this observation.
See Section~8 of~\cite{BBBGIwahori} for a formal argument. 
\end{proof}

\begin{corollary}
\label{cor:global_lifting}
Let $\mathbf{c}$ be any flag. Then
\[Z_R(\mathfrak{S}^{\unc}_\lambda(\mathbf{z};t))=
\sum_{\mathbf{d}} Z_R(\mathfrak{S}^{\col}_{\lambda,\mathbf{c},\mathbf{d}}(\mathbf{z};t)).\]
\end{corollary}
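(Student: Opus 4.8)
The plan is to obtain the corollary simply by summing the state-level identity of Proposition~\ref{prop:glolif} over all states of the uncolored system. Before doing so I would record that the map $\mathfrak{s}\mapsto p(\mathfrak{s})$ actually carries a state of $\mathfrak{S}^{\col}_{\lambda,\mathbf{c}}(\mathbf{z};t)$ to a state of $\mathfrak{S}^{\unc}_\lambda(\mathbf{z};t)$, i.e.\ that the boundary data are compatible: on the left boundary $p_{\h}$ sends $+$ to $+$; on the right boundary it sends each color $d_i$ to $-$; on the bottom boundary $p_{\v}$ sends $\boson{0}$ to $\boson{0}$; and in the column labeled $j$ the top spin $\gamma_1^{m_1}\cdots\gamma_r^{m_r}$ is sent by $p_{\v}$ to $\sum_k m_k$, which by the definition of the colored boundary data is exactly the number of parts of $\lambda$ equal to $j$, namely the uncolored top datum $m_j$. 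Thus $p$ is a well-defined map on the relevant sets of states, and $\mathfrak{S}^{\col}_{\lambda,\mathbf{c}}(\mathbf{z};t)=\bigsqcup_{\mathbf{d}}\mathfrak{S}^{\col}_{\lambda,\mathbf{c},\mathbf{d}}(\mathbf{z};t)$ is the decomposition of the colored states according to the (state-determined) flag $\mathbf{d}$ on the right boundary.

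Granted this, I would then write
\[
Z_R(\mathfrak{S}^{\unc}_\lambda(\mathbf{z};t))
=\sum_{\mathfrak{s}_0\in\mathfrak{S}^{\unc}_\lambda(\mathbf{z};t)}\beta_R^{\unc}(\mathfrak{s}_0)
=\sum_{\mathfrak{s}_0\in\mathfrak{S}^{\unc}_\lambda(\mathbf{z};t)}\ \sum_{\substack{\mathfrak{s}\in\mathfrak{S}^{\col}_{\lambda,\mathbf{c}}(\mathbf{z};t)\\ p(\mathfrak{s})=\mathfrak{s}_0}}\beta_R^{\col}(\mathfrak{s}),
\]
where the second equality is Proposition~\ref{prop:glolif} applied to each $\mathfrak{s}_0$ separately. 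Since every colored state $\mathfrak{s}\in\mathfrak{S}^{\col}_{\lambda,\mathbf{c}}(\mathbf{z};t)$ has image $p(\mathfrak{s})$ lying in $\mathfrak{S}^{\unc}_\lambda(\mathbf{z};t)$, and that image is unique, the double sum is merely a regrouping of $\sum_{\mathfrak{s}\in\mathfrak{S}^{\col}_{\lambda,\mathbf{c}}(\mathbf{z};t)}\beta_R^{\col}(\mathfrak{s})$ according to the value of $p$, so the whole expression equals $\sum_{\mathfrak{s}\in\mathfrak{S}^{\col}_{\lambda,\mathbf{c}}(\mathbf{z};t)}\beta_R^{\col}(\mathfrak{s})$. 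Finally I would split this last sum over the right boundary flag $\mathbf{d}$, obtaining $\sum_{\mathbf{d}}Z_R(\mathfrak{S}^{\col}_{\lambda,\mathbf{c},\mathbf{d}}(\mathbf{z};t))$, which is the assertion. (Flags $\mathbf{d}$ that are not a permutation of $\mathbf{c}$ index empty families by color conservation and so contribute nothing.)

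There is essentially no obstacle here: all the genuine content is in Proposition~\ref{prop:glolif}, hence ultimately in the local lifting statement Proposition~\ref{prop:loclif}. The only points deserving a word of care are the interchange of the two summations --- harmless, since everything in sight is a finite sum over finitely many states --- and the boundary compatibility check above, which is what guarantees that the outer index set is precisely $\mathfrak{S}^{\unc}_\lambda(\mathbf{z};t)$, so that in the regrouping nothing is lost and nothing is counted twice.
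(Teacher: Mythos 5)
Your proof is correct and is essentially the paper's argument: the corollary is obtained by summing Proposition~\ref{prop:glolif} over all states $\mathfrak{s}_0$ of $\mathfrak{S}^{\unc}_\lambda(\mathbf{z};t)$ and regrouping by the right boundary flag $\mathbf{d}$. The extra boundary-compatibility check you include is a sensible (and harmless) elaboration of what the paper leaves implicit.
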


\noindent
We emphasize that this statement is for the $R$-models only,
and fails for the $P$-models, since the local lifting
property is only valid for the $R$-models.

\begin{proof}
This follows from Proposition~\ref{prop:glolif} by summing over all
states $\mathfrak{s}_0$ of $\mathfrak{S}^{\unc}_\lambda(\mathbf{z};t)$.
\end{proof}

\begin{theorem}\label{thm:uncoloredpf}
  The partition functions of these systems equal the
  Hall-Littlewood $P$- and $R$-polynomials (\cite{MacdonaldBook}, Chapter~III):
  \begin{equation}
  \label{eq:rpeval}
  Z_P(\mathfrak{S}_{\lambda}^{\unc} (\mathbf{z};t)) = 
  P_{\lambda} (z_1, \cdots, z_r; t), \qquad 
  Z_R (\mathfrak{S}_{\lambda}^{\unc} (\mathbf{z}; t)) = 
  R_{\lambda} (z_1, \cdots, z_r; t) .
  \end{equation}
\end{theorem}
\begin{proof}
For the $P$-system, this is due to Korff~{\cite{KorffVerlinde}}, and
for the $R$-system, we will give a new proof. 
But note that by Proposition~\ref{prop:uncoloredpf}, and the fact that
in Macdonald's definition (\cite{MacdonaldBook}, Section~III.1)
\[R_\lambda(\mathbf{z};t)=v_\lambda(t)\,P_\lambda(\mathbf{z};t),\]
the two formulas are equivalent, and so we have new proofs of both evaluations.

Let us apply Corollary~5.3 in the special case where the flag $\mathbf{c}$
equals $\mathbf{c}_0 = (\gamma_1, \cdots, \gamma_r)$. Then using
Theorem~\ref{thm:demeval} and Proposition~\ref{prop:sphericaldem},
\[ Z_R (\mathfrak{S}^{\unc}_{\lambda}(\mathbf{z};t)) = \sum_{w \in W} Z_R
   (\mathfrak{S}^{\col}_{\lambda, \mathbf{c}_0, w\mathbf{c}_0}(\mathbf{z};t)) =
   \sum_{w \in W} \mathcal{L}_{w, t} \mathbf{z}^{\lambda} = R_{\lambda}
   (\mathbf{z}; t), \]
which is the second equation in (\ref{eq:rpeval}). We have already shown that
the two evaluations are equivalent.
\end{proof}

\section{Spherical models of $p$-adic representations}
Let $F$ be a nonarchimedean local field with ring $\mathfrak{o}$ of integers,
and prime ideal $\mathfrak{p}$ with a chosen generator $\varpi$. Let $q$
be the cardinality of the residue field $\mathfrak{o}/\mathfrak{p}$. Let $G (F) =
\GL_r (F)$, with $K = \GL_r (\mathfrak{o})$ a maximal compact
subgroup. We will denote by $J$ the Iwahori subgroup of $K$ consisting of
elements that are upper triangular modulo $\mathfrak{p}$. It will be
convenient to identify the Weyl group $W \cong S_r$ of $G$ with the subgroup
of $G$ consisting of permutation matrices. With this choice, every element of
$W$ is actually in~$K$.

Let $T$ be the {\textit{standard maximal torus}} of the affine algebraic group
$G = \GL_r$ so $T (F)$ consists of diagonal matrices in $G (F)$. Let $B$
be the {\textit{standard Borel subgroup}} of upper triangular matrices. Let $N$
be the unipotent radical of $B$ consisting of unipotent upper triangular
matrices, so $B = T N$.

The Langlands dual group $\hat{G}$ is just $\GL_r (\mathbb{C})$. We will
denote $\Lambda =\mathbb{Z}^r$; it may be thought of as the weight lattice of
$\hat{G}$. \ Let $\hat{T}$ be the diagonal subgroup of the Langlands dual
group $\GL_r (\mathbb{C})$. We will identify $\hat{T} =
(\mathbb{C}^{\times})^r$ in the obvious way, so an element $\mathbf{z} \in
\hat{T}$ can be identified with a tuple $\mathbf{z}= (z_1, \cdots, z_r)$
with $z_i \in \mathbb{C}^{\times}$. If $\mathbf{z} \in \hat{T}$ and $\lambda
\in \Lambda$ we will denote by $\mathbf{z}^{\lambda} = \prod
z_i^{\lambda_i}$. In particular if $\Phi$ is the root system of $\hat{G}$, and
if $\alpha_1, \cdots, \alpha_{r - 1}$ are the simple roots, then
$\mathbf{z}^{\alpha_i} = z_i / z_{i + 1}$.

The lattice $\Lambda$ may also be thought of as the coweight lattice of
$\GL_r$, which is isomorphic to $T(F)/T(\mathfrak{o})$. 
Thus if $\lambda \in \Lambda$ let $\varpi^{\lambda} =
\operatorname{diag} (\varpi^{\lambda_1}, \cdots, \varpi^{\lambda_r}) \in T (F)$. A
quasicharacter of $T (F)$ is {\textit{unramified}} if it is trivial on $T
(\mathfrak{o})$. Given $\mathbf{z} \in \hat{T}$ there is a unique unramified
quasicharacter $\chi_{\mathbf{z}} : T (F) \longrightarrow
\mathbb{C}^{\times}$ such that $\chi_{\mathbf{z}} (\varpi^{\lambda})
=\mathbf{z}^{\lambda}$. We may extend it to a homomorphism $B (F)
\longrightarrow \mathbb{C}^{\times}$ by letting $N (F)$ be in the kernel. Let
$\delta$ be the modular quasicharacter on $B (F)$.
Recall that we are identifying the $\GL_r$ weight lattice
with $\mathbb{Z}^r$. With the usual inner product $\langle\;,\;\rangle$
on $\mathbb{Z}^r$, the simple roots have length $\sqrt{2}$ and the inner product is invariant
under the action of the Weyl group. As before, $\rho$ is the Weyl vector, half the sum of
the positive roots.  With these normalization of the inner product 
\begin{equation}
\label{eq:cablus}
\delta (\varpi^{\lambda}) = q^{- \langle 2 \rho, \lambda \rangle} .
\end{equation}

The {\textit{unramified principal series representations}} are the
representations of $G (F)$ induced from these characters. Thus $I
(\mathbf{z})$ denotes the space of functions $f$ on $G (F)$ that are smooth
(locally constant) and that satisfy $f (b g) = (\delta^{1 / 2}
\chi_{\mathbf{z}}) (b) f (g)$ for $b \in B (F)$. The representation
$\pi_{\mathbf{z}}$ of $G (F)$ on this space is by right translation, so
$(\pi_{\mathbf{z}}(g)f)(x)=f(xg)$.
The representation $\pi (\mathbf{z})$ is irreducible if $\mathbf{z}$ is 
in general position, and if it is irreducible, then
$I (w\mathbf{z}) \cong I (\mathbf{z})$ for $w \in W$, due to
the existence of intertwining integrals $\mathcal{A}^{\mathbf{z}}_w : I
(\mathbf{z}) \longrightarrow I (w\mathbf{z})$
({\cite{CasselmanAdmissible,CasselmanSpherical}}). These are defined by
\[ \mathcal{A}^{\mathbf{z}}_w f (g) = \int_{N \cap w N^- w^{- 1}} f (w^{- 1}
   n g) \, d n, \]
where $N^-$ is the group of unipotent lower triangular matrices. The integral
is only convergent for $\mathbf{z}$ in an open subset of $\hat{T}$, but can
be extended by analytic continuation to the regular elements of $\hat{T}$.

We will follow the approach of Brubaker, Bump and 
Friedberg~{\cite{BrubakerBumpFriedbergRogawski}},
where the action of the Hecke algebra by Demazure-Lusztig operators is related
to the Iwahori fixed vectors in the spherical model. The proof relies on
Casselman~{\cite{CasselmanSpherical}} Theorems~3.4 and~3.1. The results of
Ion~{\cite{IonNonsymmetric}} are also closely related.

There is an error in {\cite{BrubakerBumpFriedbergRogawski}}, where Proposition~1 is only
correct as stated if $\lambda$ is antidominant. Moreover for the application
the constant $c (\lambda)$ in the Proposition needs to be evaluated. See
Propositions~\ref{prop:conjid} and~\ref{prop:intsupport} and 
Remark~\ref{rem:ymincase}.

We will write $g = (g_{ij})$ as a matrix, and if $I \subset \{1, 2, \cdots,
r\}$ we will write $a_I = a_I (g)$ for the minor of $k$ formed with entries
taken from the columns in $I$, and the last $|I|$ rows of $k$. For example if
$n = 4$ and $I = \{1, 3\}$ then
\[ a_I (g) = \left| \begin{array}{cc}
     g_{31} & g_{33}\\
     g_{41} & g_{43}
   \end{array} \right| . \]
If $1\leqslant m\leqslant r$,
we will denote by $I_m$ the particular subset $\{r - m + 1, r - m + 2, \cdots,
r\}$.

\begin{lemma}
  \label{lem:plucker}
  Let $g \in \GL_r (F)$. A necessary and sufficient
  condition for $g\in B J$ is that for $1\leqslant m\leqslant r$,
  and for every $m$ element subset $I$ of $\{1,2,\cdots,r\}$ except $I_m$, we have
  $a_{I_m} (g)^{- 1} a_I (g) \in \mathfrak{p}$.
  Assuming this, $g \in NJ$ if and only if $a_{I_m} (g) \in
  \mathfrak{o}^{\times}$, so that $a_I (g) \in \mathfrak{p}$ when $|I| = m$
  but $I \neq I_m$.
\end{lemma}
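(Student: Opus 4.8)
The plan is to interpret the condition $g \in BJ$ in terms of the flag variety, where it says that the Iwahori orbit of the line/flag determined by $g$ is the ``big cell'' through the base point. Concretely, let me work with the action of $\GL_r(F)$ on the space of flags, or equivalently reduce everything to Plücker coordinates. For a fixed $m$, the minors $a_I(g)$ with $|I| = m$ are, up to a common scalar, the Plücker coordinates of the $m$-dimensional subspace spanned by the bottom $m$ rows of $g$. The key observation is that left multiplication by $B$ (upper triangular) scales this collection of Plücker coordinates by a unit, up to adding multiples of $a_J(g)$ for $J$ lexicographically ``later'' than $I$ in a suitable order, so the ratios $a_{I_m}(g)^{-1} a_I(g)$ are well-behaved; and right multiplication by $J$ (Iwahori, upper triangular mod $\mathfrak{p}$) acts by column operations that are triangular modulo $\mathfrak{p}$, which means $a_{I_m}(g)^{-1}a_I(g) \bmod \mathfrak{p}$ is unchanged for the ``minimal'' index $I_m = \{r-m+1,\dots,r\}$ and the other ratios stay in $\mathfrak{p}$ if they started there. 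This makes the stated condition manifestly a function of the double coset $BgJ$, so it suffices to check it on a set of double coset representatives.

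First I would establish the two invariance statements above precisely: (i) replacing $g$ by $bg$ with $b \in B(F)$ multiplies each $a_I(g)$ by a common factor $\det$ of the relevant lower-right block of $b$ (times a unit), so all ratios $a_{I_m}(g)^{-1}a_I(g)$ are literally unchanged; (ii) replacing $g$ by $gk$ with $k \in J$ replaces the row of minors $(a_I(g))_{|I|=m}$ by its image under $\wedge^m$ of $k$ acting on columns — and since $k$ is upper-triangular mod $\mathfrak{p}$, the matrix $\wedge^m(k)$ is, in the basis of $m$-subsets ordered so that $I_m$ is minimal, upper-triangular mod $\mathfrak{p}$ with unit diagonal, hence preserves the conditions ``$a_{I_m} \in \mathfrak{o}^\times$'' and ``$a_{I_m}^{-1}a_I \in \mathfrak{p}$ for $I \neq I_m$.'' This reduces both parts of the lemma to verifying them for $g = $ a representative. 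By the Iwahori/Bruhat decomposition $G = \bigsqcup_{w \in W} B \varpi^\mu w J$ (with $\mu$ ranging over $\Lambda$), it suffices to take $g = \varpi^\mu w$. Then compute the minors $a_I(\varpi^\mu w)$ explicitly: for such a monomial matrix they are (up to sign) $\varpi^{\sum_{j \in I}\mu_{w^{-1}(j)}}$ when $I = w(\{r-m+1,\dots,r\})$ and $0$ otherwise for each $m$. One checks that the condition $a_{I_m}^{-1}a_I \in \mathfrak{p}$ for all $I \neq I_m$ holds iff $w = 1$, and then $a_{I_m}(\varpi^\mu) = \varpi^{\mu_{r-m+1}+\dots+\mu_r}$, which lies in $\mathfrak{o}^\times$ iff that exponent is $0$ for all $m$, i.e. iff $\mu = 0$, i.e. iff $g \in NJ$. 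This proves both assertions.

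The main obstacle I anticipate is bookkeeping in step (ii): making sure that the ordering on $m$-element subsets is chosen consistently (so that $I_m$ is the unique minimal element and the action of an upper-unitriangular-mod-$\mathfrak{p}$ matrix on columns induces an upper-unitriangular-mod-$\mathfrak{p}$ matrix on Plücker coordinates), and checking that this single ordering works simultaneously for all $m$. A clean way to handle this is to order subsets by comparing them as decreasing sequences lexicographically, or equivalently to use the Bruhat order on the cosets $W/(S_m \times S_{r-m})$; then column operations that add a multiple of an earlier column to a later column (which is what membership in $J$ gives modulo $\mathfrak{p}$) only add ``later'' Plücker coordinates to a given one, which is exactly the triangularity needed. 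A secondary, minor point is the reduction to representatives $\varpi^\mu w$: one should note the lemma's hypothesis and conclusion only involve $g$ through $BgJ$, and invoke the (well-known) fact that $\{\varpi^\mu w : \mu \in \Lambda, w \in W\}$ meets every such double coset. Everything else is a direct computation with monomial matrices.
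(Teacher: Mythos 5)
The paper gives no argument for this lemma at all (the proof reads ``We leave this to the reader''), so there is nothing of the authors' to compare against; judged on its own, your proposal is correct and would serve as a complete proof once the bookkeeping you flag is pinned down. Two points to tighten. First, the orientation of the triangularity: in the componentwise (Bruhat) order on $m$-element subsets, $I_m=\{r-m+1,\dots,r\}$ is the \emph{maximal} element, and for $k\in J$ one has $\det(k_{J,I})\equiv 0 \pmod{\mathfrak{p}}$ unless $J\leqslant I$ componentwise, with $\det(k_{I_m,I_m})\in\mathfrak{o}^{\times}$. The facts you actually use are $\det(k_{I_m,I})\in\mathfrak{p}$ for $I\neq I_m$ (so no large contribution of $a_{I_m}$ leaks into the other coordinates) together with the unit diagonal; note that the coefficients $\det(k_{J,I_m})$ with $J\neq I_m$ lie only in $\mathfrak{o}$, so the invariance of the condition under right multiplication by $J$ is not a consequence of triangularity alone but also uses the hypothesis $a_I(g)\in a_{I_m}(g)\mathfrak{p}$ to absorb those terms into a unit multiple of $a_{I_m}(g)$; since $J$ is a group this still gives an ``if and only if'' on double cosets. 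Second, the reductions: in the decomposition $G=\bigsqcup_{w\in W}B\varpi^{\mu}wJ$ the factor $\varpi^{\mu}$ is redundant ($\varpi^{\mu}\in B$), so for the first statement representatives $w\in W$ suffice, and the permutation-matrix computation singles out $w=1$; for the second statement the minors $a_{I_m}$ are exactly invariant only under left multiplication by $N$ (a general $b\in B$ multiplies them by the determinant of its lower-right block), so one should reduce within $BJ$ to representatives $\varpi^{\mu}$ of $N\backslash BJ/J$, writing $g=n\varpi^{\mu}j$ via $B=NT$ and $T(\mathfrak{o})\subset J$; then $a_{I_m}(g)$ is a unit times $\varpi^{\mu_{r-m+1}+\cdots+\mu_r}$, which is a unit for all $m$ exactly when $\mu=0$, while the converse direction is the direct computation $a_{I_m}(nj)=a_{I_m}(j)\in\mathfrak{o}^{\times}$. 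With these adjustments your argument is sound.
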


\begin{proof}
  We leave this to the reader.
\end{proof}

Let $\lambda$ be a dominant weight. Let
\[ J_{\lambda} = \left\{ (k_{i j}) \in K | \text{$k_{i j} \in \mathfrak{p}^{1 + \lambda_j - \lambda_i}$ when $i > j$} \right\}.
\]
This is clearly a subgroup of~$J$.

\begin{lemma}
  \label{lem:jlamindex} Assume that $\lambda$ is dominant. Then
  the index of $J_{\lambda}$ in $J$ is $q^{\langle
  \lambda, 2 \rho \rangle}$.
\end{lemma}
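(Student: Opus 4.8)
The plan is to compute the index directly from the definition of $J_\lambda$ as a subgroup of $J$ cut out by congruence conditions on the strictly-lower-triangular entries. Recall $J$ consists of matrices in $K = \GL_r(\mathfrak{o})$ that are upper triangular mod $\mathfrak{p}$, so for $(k_{ij})\in J$ the entries with $i>j$ lie in $\mathfrak{p}$, while the entries with $i\leqslant j$ lie in $\mathfrak{o}$ (with the diagonal in $\mathfrak{o}^\times$). The subgroup $J_\lambda$ imposes the stronger condition $k_{ij}\in\mathfrak{p}^{1+\lambda_j-\lambda_i}$ for $i>j$; since $\lambda$ is dominant, $\lambda_i\leqslant\lambda_j$ when $i>j$, so $1+\lambda_j-\lambda_i\geqslant 1$ and this is indeed a smaller subset of $\mathfrak{p}$, confirming $J_\lambda\subseteq J$.

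First I would set up the standard filtration argument. Let $J_1 = \{k\in K : k\equiv I \bmod \mathfrak{p}\}$ be the principal congruence subgroup; then $J_1$ is normal in $J$ (and in $K$) with $J/J_1 \cong B(\mathbb{F}_q)$, the upper-triangular Borel in $\GL_r(\mathbb{F}_q)$, and similarly $J_\lambda \supseteq J_1$ if and only if all exponents $1+\lambda_j-\lambda_i$ equal $1$ — which is generally false, so instead I work with the larger common subgroup and use multiplicativity of the index. The cleanest route: observe that the condition defining $J$ inside $K$, and the condition defining $J_\lambda$ inside $K$, only constrain the entries $k_{ij}$ for $i>j$; all other entries range over $\mathfrak{o}$ (resp. $\mathfrak{o}^\times$ on the diagonal) freely. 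So I would prove $[J:J_\lambda] = \prod_{i>j}[\mathfrak{p} : \mathfrak{p}^{1+\lambda_j-\lambda_i}] = \prod_{i>j} q^{\lambda_j-\lambda_i}$, where $[\mathfrak{p}:\mathfrak{p}^{k}] = q^{k-1}$ since $\mathfrak{o}/\mathfrak{p}$ has $q$ elements. To make the ``entries range freely'' claim rigorous, I would use the Iwahori factorization: $J = (J\cap N^-)(J\cap T)(J\cap N)$ and likewise $J_\lambda = (J_\lambda\cap N^-)(J_\lambda\cap T)(J_\lambda\cap N)$, where $J\cap T = J_\lambda\cap T$ and $J\cap N = J_\lambda\cap N$ (the upper-triangular and diagonal parts are unaffected), so that $[J:J_\lambda] = [J\cap N^- : J_\lambda\cap N^-]$, and $J\cap N^-$, $J_\lambda\cap N^-$ are groups of lower-triangular unipotent matrices whose $(i,j)$ entry ($i>j$) ranges over $\mathfrak{p}$ resp. $\mathfrak{p}^{1+\lambda_j-\lambda_i}$, with the coordinates behaving additively modulo the filtration so the index is the product of the local indices.

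Then the computation finishes as follows:
\[
[J:J_\lambda] = \prod_{i>j} q^{\lambda_j-\lambda_i} = q^{\sum_{i>j}(\lambda_j-\lambda_i)} = q^{\sum_{\alpha\in\Phi^+}\langle\alpha,\lambda\rangle} = q^{\langle 2\rho,\lambda\rangle},
\]
using that the positive roots of $\GL_r$ are $\alpha = e_j - e_i$ for $j<i$ (so $\langle\alpha,\lambda\rangle = \lambda_j-\lambda_i$ for $i>j$) and that $2\rho = \sum_{\alpha\in\Phi^+}\alpha$, consistent with the normalization of $\langle\;,\;\rangle$ fixed earlier in this section (the same normalization appearing in (\ref{eq:cablus})).

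The main obstacle is making the ``coordinates range freely / index is the product of local indices'' step fully rigorous: one must check that $J\cap N^-$ is genuinely parametrized by the lower-triangular entries in a way compatible with the congruence filtration, i.e. that reducing a matrix in $J\cap N^-$ modulo $\mathfrak{p}^{N}$ for suitable $N$ realizes $(J\cap N^-)/(J_\lambda\cap N^-)$ as a direct product $\prod_{i>j}\mathfrak{p}/\mathfrak{p}^{1+\lambda_j-\lambda_i}$ of additive groups. This is true because multiplication of unipotent lower-triangular matrices is upper-unitriangular in the entries, so modulo a sufficiently deep congruence the group structure on the relevant quotient is abelian and splits as the claimed product; I would either cite the standard Iwahori-factorization/filtration facts (e.g. from Casselman or Bruhat--Tits) or spell out the short computation that the commutators and higher-order terms land in $J_\lambda\cap N^-$. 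Everything else is bookkeeping with the root system.
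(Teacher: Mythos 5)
Your argument is correct and follows essentially the same route as the paper: an Iwahori factorization of $J$ and $J_\lambda$ reduces the index to $[\,J\cap N^-:J_\lambda\cap N^-\,]$, which is then computed entry by entry as $\prod_{i>j} q^{\lambda_j-\lambda_i}=q^{\langle\lambda,2\rho\rangle}$. Your extra care about why the quotient splits as a product of the groups $\mathfrak{p}/\mathfrak{p}^{1+\lambda_j-\lambda_i}$ is a reasonable elaboration of a step the paper simply asserts via the Iwahori factorization.
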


\begin{proof}
  This group has an Iwahori factorization ({\cite{CasselmanAdmissible}}
	Section~1.4 generalizing \cite{IwahoriMatsumoto} Theorem~2.1), which means 
	in this particular case that we can write
  $J_{\lambda} = (N_- (\mathfrak{p}) \cap J_{\lambda}) B (\mathfrak{o})$. From
  this we can compute
  \begin{equation}
    \label{eq:jlamindex} [J : J_{\lambda}] = [N_- (\mathfrak{p}) : N_-
    (\mathfrak{p}) \cap J_{\lambda}] = \prod_{i > j} q^{\lambda_i - \lambda_j}
    = q^{\langle \lambda, 2 \rho \rangle} .
  \qedhere
  \end{equation}
\end{proof}

\begin{proposition}
  \label{prop:conjid}Suppose that $\lambda$ is antidominant. Then
  \[ \label{eq:pikchar} \{k \in K| \varpi^{- \lambda} k \varpi^{\lambda} \in
     BJ\} = J_{- \lambda} . \]
  This is thus a subgroup of $J$, of index $q^{\langle - \lambda, 2 \rho
  \rangle}$.
\end{proposition}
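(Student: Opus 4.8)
The plan is to prove the set equality $\{k\in K : \varpi^{-\lambda}k\varpi^{\lambda}\in BJ\}=J_{-\lambda}$ by two inclusions, and then read off the index from Lemma~\ref{lem:jlamindex}. Throughout, for $k=(k_{ij})\in K$ I write $g=\varpi^{-\lambda}k\varpi^{\lambda}$, so $g_{ij}=\varpi^{\lambda_j-\lambda_i}k_{ij}$ and $\det g=\det k\in\mathfrak o^{\times}$; I use repeatedly that $\lambda$ antidominant means $\lambda_1\leqslant\cdots\leqslant\lambda_r$. For the easy inclusion, if $k\in J_{-\lambda}$ then $k_{ij}\in\mathfrak p^{1+\lambda_i-\lambda_j}$ for $i>j$, whence $g_{ij}=\varpi^{\lambda_j-\lambda_i}k_{ij}\in\mathfrak p$, while for $i\leqslant j$ one has $\lambda_j-\lambda_i\geqslant 0$ and $g_{ij}\in\mathfrak o$; together with $\det g\in\mathfrak o^{\times}$ this says $g\in K$ and $g$ is upper triangular modulo $\mathfrak p$, i.e.\ $g\in J\subseteq BJ$. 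So $J_{-\lambda}$ is contained in the set.

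For the reverse inclusion I would apply the Plücker criterion of Lemma~\ref{lem:plucker} to $g$. Pulling $\varpi^{-\lambda_i}$ out of row $i$ and $\varpi^{\lambda_j}$ out of column $j$ of the minors occurring there shows that, with $I_m=\{r-m+1,\dots,r\}$ and $\lambda(I)=\sum_{j\in I}\lambda_j$,
\[a_I(g)=\varpi^{\lambda(I)-\lambda(I_m)}a_I(k)\quad(|I|=m),\qquad a_{I_m}(g)=a_{I_m}(k),\]
and $\lambda(I)\leqslant\lambda(I_m)$ since $I_m$ picks out the $m$ largest parts of $\lambda$. If $g\in BJ$, Lemma~\ref{lem:plucker} gives $\operatorname{val}a_I(g)\geqslant 1+\operatorname{val}a_{I_m}(g)$ for all $I\neq I_m$, i.e.
\[\operatorname{val}a_I(k)\ \geqslant\ 1+\operatorname{val}a_{I_m}(k)+\bigl(\lambda(I_m)-\lambda(I)\bigr)\ \geqslant\ 1+\operatorname{val}a_{I_m}(k).\]
Because the rows of $k\in\GL_r(\mathfrak o)$ are part of an $\mathfrak o$-basis, the $m\times m$ minors $a_I(k)$ (over the last $m$ rows) are not all divisible by $\varpi$, so $\min_I\operatorname{val}a_I(k)=0$; as $a_{I_m}(k)$ realizes the strict minimum, $a_{I_m}(k)\in\mathfrak o^{\times}$ for every $m$, and hence $\operatorname{val}a_I(k)\geqslant 1+\lambda(I_m)-\lambda(I)$ for all $m$ and all $I\neq I_m$ of size $m$.

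It remains to convert these minor bounds into the entrywise bound $\operatorname{val}k_{ij}\geqslant 1+\lambda_i-\lambda_j$ for $i>j$, which together with $k\in K$ is exactly the condition $k\in J_{-\lambda}$. Fix $i>j$, take $m=r-i+1$ so that $I_m=\{i,\dots,r\}$, and take $I=\{j,i+1,\dots,r\}$; then $\lambda(I_m)-\lambda(I)=\lambda_i-\lambda_j$, so $\operatorname{val}a_I(k)\geqslant 1+\lambda_i-\lambda_j$. Cofactor expansion of $a_I(k)$ along its column of index $j$ gives $a_I(k)=\pm k_{ij}M_i+\sum_{p>i}\pm k_{pj}M_p$, where $M_p$ is the minor of $k$ on rows $\{i,\dots,r\}\setminus\{p\}$ and columns $\{i+1,\dots,r\}$; here $M_i=a_{\{i+1,\dots,r\}}(k)\in\mathfrak o^{\times}$ by the previous paragraph and each $M_p\in\mathfrak o$. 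I would now induct downward on $i$: for $i=r$ one has $a_I(k)=\pm k_{rj}$ directly; for the inductive step the hypothesis gives $\operatorname{val}k_{pj}\geqslant 1+\lambda_p-\lambda_j\geqslant 1+\lambda_i-\lambda_j$ for $p>i$ (this is where antidominance enters), so every term other than $\pm k_{ij}M_i$ has valuation $\geqslant 1+\lambda_i-\lambda_j$, forcing the same for $k_{ij}$. This gives the reverse inclusion, hence equality; $J_{-\lambda}$ is visibly a subgroup of $J$, and since $-\lambda$ is dominant its index is $q^{\langle-\lambda,2\rho\rangle}$ by Lemma~\ref{lem:jlamindex}.

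The main obstacle is this last step — extracting the individual entry bound from the minor bounds. The delicate point is that the cofactor expansion couples $k_{ij}$ only to entries $k_{pj}$ with $p>i$ (so the downward induction closes) and that, since all competing terms already have valuation at least $1+\lambda_i-\lambda_j$, no cancellation can push $\operatorname{val}k_{ij}$ below the desired bound; everything else is bookkeeping with valuations.
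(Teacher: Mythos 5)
Your proof is correct and follows essentially the same route as the paper: apply Lemma~\ref{lem:plucker} to $\varpi^{-\lambda}k\varpi^{\lambda}$, use $a_I(\varpi^{-\lambda}k\varpi^{\lambda})=\varpi^{S(\lambda,I)-S(\lambda,I_m)}a_I(k)$ together with antidominance and the coprimality of the bottom-row minors to get $a_{I_m}(k)\in\mathfrak o^{\times}$ and $a_I(k)\in\mathfrak p^{1+S(\lambda,I_m)-S(\lambda,I)}$, and then invoke Lemma~\ref{lem:jlamindex} for the index. The only difference is that you spell out two points the paper leaves implicit — the easy inclusion $J_{-\lambda}\subseteq\{k:\varpi^{-\lambda}k\varpi^{\lambda}\in BJ\}$ and the cofactor-expansion induction converting the minor bounds into the entrywise bounds defining $J_{-\lambda}$ — and both are handled correctly.
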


\begin{proof}
  Since $BJ$ is not a group, it is not \textit{a priori} clear that $\{k \in
  K| \varpi^{- \lambda} k \varpi^{\lambda} \in BJ\}$ is a group. We will show
  that if $k \in K$ and $\varpi^{- \lambda} k \varpi^{\lambda} \in BJ$, then
  $\varpi^{- \lambda} k \varpi^{\lambda} \in NJ$. 
  Let $a_I(k)$ be the minors as above. We will also denote $S (\lambda, I) = \sum_{j
  \in I} \lambda_j$. We have
  \[ a_I  (\varpi^{- \lambda} k \varpi^{\lambda}) = \varpi^{S (\lambda, I) - S
     (\lambda, I_m)} a_I (k) . \]
  If $I \neq I_m$, by Lemma~\ref{lem:plucker} we see that $a_I  (\varpi^{-
  \lambda} k \varpi^{\lambda}) \in \mathfrak{p}$ because $\varpi^{- \lambda} k
  \varpi^{\lambda} \in BJ$. 
  Let $N_I = S (\lambda, I_m) - S (\lambda, I)$. Then 
  $N_I \geqslant 0$ since $\lambda$ is antidominant, so 
  $a_I (k) = \varpi^{N_I} a_I  (\varpi^{- \lambda} k
  \varpi^{\lambda})$. In particular $a_I (k) \in \mathfrak{p}$.
  
  With $m$ fixed, the minors $a_I (k)$ with $|I| = m$ are coprime, since 
  $k\in \GL_r (\mathfrak{o})$. We have shown that if $I \neq I_m$ then
  $a_I (k) \in \mathfrak{p}$ and it follows that $a_{I_m} (k)$ is a unit. But
  $a_{I_m}  (\varpi^{- \lambda} k \varpi^{\lambda}) = a_{I_m} (k)$ and by
  Lemma~\ref{lem:plucker}, we learn that 
  $\varpi^{- \lambda} k \varpi^{\lambda} \in NJ$.
  
  Since $a_I (k) \in \mathfrak{p}$ for all $I \neq I_m$, the set $\{k \in K|
  \varpi^{- \lambda} k \varpi^{\lambda} \in BJ\}$ is contained in $J$, and we
  have proved that if $g$ is in this set then $a_I (g)$ is in $\mathfrak{p}^{1
  + S (\lambda, I_m) - S (\lambda, I)}$. Furthermore, its diagonal entries are
  units. This implies that it is contained in $J_{- \lambda}$. The index is
  given by Lemma~\ref{lem:jlamindex}.
\end{proof}

Let $\phi^{\mathbf{z}}_w$ be the basis of Iwahori fixed vectors in $I
(\mathbf{z})$ defined by
\[ \phi^{\mathbf{z}}_w (b w' k) = \left\{\begin{array}{ll}
     \delta^{1 / 2} \chi_{\mathbf{z}} (b) & \text{if $w = w'$,}\\
     0 & \text{otherwise},
   \end{array}\right. \]
for $w, w' \in W$, $b \in B (F)$ and $k \in J$. Note that every element of $G
(F)$ is in $B (F) w' J$ for a unique $w' \in W$, so this definition makes
sense. Let $\phi_{\circ}^{\mathbf{z}}$ be the $K$-fixed vector, so
\[ \phi^{\mathbf{z}}_{\circ} (b k) = \delta^{1 / 2} \chi_{\mathbf{z}} (b),
   \qquad b \in B (F), \; k \in K. \]

We normalize the Haar integral so that $J$ has volume $1$, and $K$ has
volume
\[\sum_{w\in W}q^{\ell(w)}=\prod_{i=1}^{r}\frac{q^i-1}{q-1}.\]
Let $\mathcal{S}^{\mathbf{z}} : I (\mathbf{z}) \longrightarrow \mathbb{C}$
be the spherical functional:
\[ \mathcal{S}^{\mathbf{z}} (\phi) = \int_K \phi (k) \, d k \]
and for $w \in W$ let $\sigma_w^{\mathbf{z}} : G (F) \longrightarrow
\mathbb{C}$ be the function
\[ \sigma_w^{\mathbf{z}} (g) =\mathcal{S}^{\mathbf{z}} (\pi_{\mathbf{z}}
   (g) \phi^{\mathbf{z}}_w) = \int_K \phi^{\mathbf{z}}_w (k g) \, d k. \]
The {\textit{spherical function}} is defined by
\[ \sigma_{\circ}^{\mathbf{z}} (g) =\mathcal{S}^{\mathbf{z}}
   (\pi_{\mathbf{z}} (g) \phi^{\mathbf{z}}_{\circ}) = \int_K
   \phi^{\mathbf{z}}_{\circ} (k g) \, d k. \]
It is both left and right invariant by $K$.

\begin{theorem}
  \label{thm:demrecurse}Let $w \in W$ and let $s_i$ be a simple reflection
  such that $s_i w > w$. Then
  \begin{equation}
  \label{eq:sigmademrec}
  \sigma_{s_i w}^{\mathbf{z}} (g) =\mathcal{L}_{i,q} \sigma_w^{\mathbf{z}} (g) .
  \end{equation}
  Assume that $\lambda$ is dominant. Then
  \begin{equation}
    \label{eq:sigmaw} \sigma_w^{\mathbf{z}} (\varpi^{w_0 \lambda}) = q^{-
    \langle \lambda, \rho \rangle} \mathcal{L}_{w, q} \mathbf{z}^{w_0
    \lambda} .
  \end{equation}
\end{theorem}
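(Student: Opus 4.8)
The plan is to establish \eqref{eq:sigmademrec} first, as a recursion for the Iwahori-spherical functions, and then to derive \eqref{eq:sigmaw} by identifying a base case and iterating. For the recursion, I would follow \cite{BrubakerBumpFriedbergRogawski}. The intertwining integrals $\mathcal{A}^{\mathbf{z}}_{s_i}: I(\mathbf{z}) \to I(s_i\mathbf{z})$ act on the Iwahori basis $\{\phi^{\mathbf{z}}_w\}$ by the well-known formulas of Casselman \cite{CasselmanSpherical}: roughly, $\mathcal{A}^{\mathbf{z}}_{s_i}$ sends $\phi^{\mathbf{z}}_w$ to a linear combination of $\phi^{s_i\mathbf{z}}_w$ and $\phi^{s_i\mathbf{z}}_{s_iw}$ with coefficients that are rational in $\mathbf{z}^{\alpha_i}$. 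Since $\mathcal{S}^{s_i\mathbf{z}}\circ \mathcal{A}^{\mathbf{z}}_{s_i} = c\,\mathcal{S}^{\mathbf{z}}$ for a suitable scalar $c$ (the spherical functional is unique up to scalar, and the $K$-fixed vector is preserved by the intertwiner up to the Gindikin--Karpelevich factor), applying $\mathcal{S}^{s_i\mathbf{z}}(\pi_{s_i\mathbf{z}}(g)\,\cdot\,)$ to both sides of Casselman's formula and solving for $\sigma^{\mathbf{z}}_{s_iw}(g)$ yields exactly the Demazure--Lusztig operator $\mathcal{L}_{i,q}$ applied to $\sigma^{\mathbf{z}}_w(g)$ when $s_iw>w$. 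This is the argument of \cite[Theorem 14 and surrounding discussion]{BrubakerBumpFriedbergMatrix} transcribed to the spherical side; the one subtlety is bookkeeping the precise normalization constants so that the operator is $\mathcal{L}_{i,q}$ exactly as in \eqref{eq:dldef} and not some rescaling, but the quadratic relation $\mathcal{L}_{i,q}^2 = (q-1)\mathcal{L}_{i,q}+q$ and the known structure pin this down.

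For the base case of \eqref{eq:sigmaw}, I would take $w = 1$ and compute $\sigma^{\mathbf{z}}_1(\varpi^{w_0\lambda})$ directly. By definition $\sigma^{\mathbf{z}}_1(\varpi^{w_0\lambda}) = \int_K \phi^{\mathbf{z}}_1(k\varpi^{w_0\lambda})\,dk$, and the integrand is supported on $\{k \in K : \varpi^{w_0\lambda} \text{-conjugate of } k \in BJ\}$-type conditions. Since $\lambda$ is dominant, $w_0\lambda$ is antidominant, so Proposition~\ref{prop:conjid} applies: the relevant set of $k$ is $J_{-w_0\lambda}$, a subgroup of $J$ of index $q^{\langle -w_0\lambda, 2\rho\rangle} = q^{\langle \lambda, 2\rho\rangle}$ (using $w_0$-invariance of the pairing up to sign, $\langle w_0\lambda,2\rho\rangle = -\langle\lambda,2\rho\rangle$). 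On this set $\phi^{\mathbf{z}}_1(k\varpi^{w_0\lambda}) = \delta^{1/2}\chi_{\mathbf{z}}(\varpi^{w_0\lambda}) = q^{-\langle \rho, w_0\lambda\rangle}\mathbf{z}^{w_0\lambda} = q^{\langle\rho,\lambda\rangle}\mathbf{z}^{w_0\lambda}$ by \eqref{eq:cablus}, while its volume is $[J:J_{-w_0\lambda}]^{-1} = q^{-\langle\lambda,2\rho\rangle}$ times the volume of $J$, which is $1$. Multiplying, $\sigma^{\mathbf{z}}_1(\varpi^{w_0\lambda}) = q^{\langle\rho,\lambda\rangle - \langle\lambda,2\rho\rangle}\mathbf{z}^{w_0\lambda} = q^{-\langle\lambda,\rho\rangle}\mathbf{z}^{w_0\lambda}$, which is exactly \eqref{eq:sigmaw} for $w=1$ since $\mathcal{L}_{1,q} = \mathrm{id}$.

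Finally, I would bootstrap from $w=1$ to general $w$ using \eqref{eq:sigmademrec}: writing a reduced word $w = s_{i_1}\cdots s_{i_k}$, each left multiplication by a simple reflection that increases length corresponds to applying the corresponding $\mathcal{L}_{i,q}$, so $\sigma^{\mathbf{z}}_w(\varpi^{w_0\lambda}) = \mathcal{L}_{i_1,q}\cdots\mathcal{L}_{i_k,q}\,\sigma^{\mathbf{z}}_1(\varpi^{w_0\lambda}) = q^{-\langle\lambda,\rho\rangle}\mathcal{L}_{w,q}\mathbf{z}^{w_0\lambda}$, where well-definedness of $\mathcal{L}_{w,q}$ (independence of reduced word) comes from the braid relations cited after \eqref{eq:dlinverse}. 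I expect the main obstacle to be the first step: carefully tracking the normalization constants in the intertwining-operator computation so that the Demazure--Lusztig operator emerging is precisely $\mathcal{L}_{i,q}$ with no stray powers of $q$ or factors of $\mathbf{z}^{\alpha_i}$ — in particular getting the $q$ in \eqref{eq:dldef} (rather than, say, $q^{-1}$ or a square root) on the correct side requires matching the conventions of \cite{CasselmanSpherical} for the intertwining integrals with the normalization of the spherical functional and Haar measure used here. The rest is essentially bookkeeping with dominance and the Iwahori-factorization index computations already in hand via Lemmas~\ref{lem:plucker}, \ref{lem:jlamindex} and Proposition~\ref{prop:conjid}.
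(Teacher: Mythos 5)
Your proposal matches the paper's proof: the recursion \eqref{eq:sigmademrec} is simply imported from \cite{BrubakerBumpFriedbergRogawski} (Theorem~1, resting on Casselman's intertwining-operator formulas) rather than re-derived, and \eqref{eq:sigmaw} is obtained exactly as you do, by computing the $w=1$ base case via Proposition~\ref{prop:conjid}, the index formula of Lemma~\ref{lem:jlamindex}, and \eqref{eq:cablus}, then inducting on $\ell(w)$ along a reduced word. Your base-case bookkeeping ($q^{\langle\rho,\lambda\rangle}\cdot q^{-\langle\lambda,2\rho\rangle}=q^{-\langle\lambda,\rho\rangle}$) agrees with the paper's, so the argument is essentially identical.
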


\begin{proof}
  The recursion (\ref{eq:sigmademrec})
  is proved in {\cite{BrubakerBumpFriedbergRogawski}}, Theorem~1, based
  on results of Casselman~\cite{CasselmanSpherical}. It is equivalent to
  {\cite{IonNonsymmetric}}, Proposition~5.8.

  Now (\ref{eq:sigmaw}) follows by induction on $\ell (w)$ provided
  we first establish the base case where $w = 1$. 
  Let $\mu = w_0 \lambda$, so $\mu$ is antidominant.
  Then
  \[ \sigma_1^{\mathbf{z}} (\varpi^{w_0 \lambda}) = \int_K \phi_1^{\mathbf{z}} (k
     \varpi^{\mu}) \, d k = \delta^{1 / 2} \chi (\varpi^{\mu}) \int_K \phi_1^{\mathbf{z}}
     (\varpi^{- \mu} k \varpi^{\mu}) \, d k. \]
  We are normalizing the Haar measure so the volume of $J$ is $1$. By
  Proposition~\ref{prop:conjid}, the integrand is nonzero if and only if $\varpi^{-
  \mu} k \varpi^{\mu} \in B J$ and since $\mu$ is antidominant, the condition
  for this is that $\varpi^{- \mu} k \varpi^{\mu} \in J_{- w_0 \lambda}$,
  so the contribution is the volume $q^{2 \langle \rho, -w_0 \lambda \rangle} =
  q^{2 \langle \rho, \lambda \rangle}$, which is the reciprocal of
  $[J:J_{-w_0(\lambda)}]$. On the other hand by (\ref{eq:cablus}) we have
  $\delta^{1 / 2}
  (\varpi^{\mu}) = q^{- \langle \rho, \mu \rangle} = q^{\langle \rho, \lambda
  \rangle}$, while $\chi_{\mathbf{z}} (\varpi^{\mu}) =\mathbf{z}^{\mu}
  =\mathbf{z}^{w_0 \lambda}$. Thus we have the {\textit{base case}}
  \[ \sigma_1^{\mathbf{z}} (\varpi^{w_0 \lambda}) = q^{- \langle \lambda,
     \rho \rangle} \mathbf{z}^{w_0 \lambda} . \]
  This proves (\ref{eq:sigmaw}) when $w = 1$. 
\end{proof}

We may now give a proof of the Macdonald formula.

\begin{theorem}[Macdonald
{\cite{MacdonaldTata,MacdonaldBook,CasselmanSpherical}}]
  Let $\lambda$ be dominant. Then
  \begin{equation}
  \label{eq:macdonald}
  \sigma_{\circ}^{\mathbf{z}} (\varpi^{\lambda}) = q^{| \Phi^+ |} q^{-
     \langle \lambda, \rho \rangle} R_{\lambda} (\mathbf{z}; q^{- 1}) . 
  \end{equation}
\end{theorem}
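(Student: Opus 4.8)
The plan is to deduce the Macdonald formula from the relation between the spherical function $\sigma_\circ^{\mathbf{z}}$ and the Iwahori-fixed vectors $\sigma_w^{\mathbf{z}}$, combined with the computation of $\sigma_w^{\mathbf{z}}(\varpi^{w_0\lambda})$ in Theorem~\ref{thm:demrecurse}. The key observation is that the $K$-fixed vector $\phi_\circ^{\mathbf{z}}$ decomposes in terms of the Iwahori basis $\phi_w^{\mathbf{z}}$. Concretely, since $\phi_\circ^{\mathbf{z}}$ is supported on $BK = \bigsqcup_{w\in W} BwJ$ and takes the value $\delta^{1/2}\chi_{\mathbf{z}}(b)$ on $bw'k$, we have $\phi_\circ^{\mathbf{z}} = \sum_{w\in W}\phi_w^{\mathbf{z}}$. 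Applying the spherical functional and using linearity, $\sigma_\circ^{\mathbf{z}}(g) = \sum_{w\in W}\sigma_w^{\mathbf{z}}(g)$.

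Next I would specialize $g = \varpi^{w_0\lambda}$. By equation~(\ref{eq:sigmaw}) of Theorem~\ref{thm:demrecurse}, each term is $\sigma_w^{\mathbf{z}}(\varpi^{w_0\lambda}) = q^{-\langle\lambda,\rho\rangle}\mathcal{L}_{w,q}\mathbf{z}^{w_0\lambda}$. Summing over $w\in W$ gives
\[
\sigma_\circ^{\mathbf{z}}(\varpi^{\lambda}) = q^{-\langle\lambda,\rho\rangle}\sum_{w\in W}\mathcal{L}_{w,q}\,\mathbf{z}^{w_0\lambda}.
\]
Wait — one must be careful: (\ref{eq:sigmaw}) computes $\sigma_w^{\mathbf{z}}(\varpi^{w_0\lambda})$, so to get $\sigma_\circ^{\mathbf{z}}(\varpi^{\lambda})$ I should instead apply the decomposition at $g=\varpi^{\lambda}$, or equivalently replace $\lambda$ by $w_0\lambda$ throughout. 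Since $\sigma_\circ^{\mathbf{z}}$ is bi-$K$-invariant and $\varpi^{\lambda}$ and $\varpi^{w_0\lambda}$ lie in the same $K$-double coset, $\sigma_\circ^{\mathbf{z}}(\varpi^\lambda) = \sigma_\circ^{\mathbf{z}}(\varpi^{w_0\lambda}) = \sum_w \sigma_w^{\mathbf{z}}(\varpi^{w_0\lambda}) = q^{-\langle\lambda,\rho\rangle}\sum_{w\in W}\mathcal{L}_{w,q}\mathbf{z}^{w_0\lambda}$.

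Finally I would evaluate the sum $\sum_{w\in W}\mathcal{L}_{w,q}\mathbf{z}^{w_0\lambda}$ using Proposition~\ref{prop:sphericaldem}, which gives $\sum_{w\in W}\mathcal{L}_{w,q}f = \Omega\bigl(\prod_{\alpha\in\Phi^+}(1-q\mathbf{z}^{-\alpha})f\bigr)$; taking $f = \mathbf{z}^{w_0\lambda}$ and then invoking Proposition~\ref{prop:rlamns} (with the roles of $q$ and $q^{-1}$ and the factor $q^{|\Phi^+|}$), we get $\Omega\bigl(\prod_{\alpha\in\Phi^+}(1-q\mathbf{z}^{-\alpha})\mathbf{z}^{w_0\lambda}\bigr) = q^{|\Phi^+|}R_\lambda(\mathbf{z};q^{-1})$. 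Combining, $\sigma_\circ^{\mathbf{z}}(\varpi^\lambda) = q^{|\Phi^+|}q^{-\langle\lambda,\rho\rangle}R_\lambda(\mathbf{z};q^{-1})$, which is (\ref{eq:macdonald}). The main obstacle — really the only nonroutine point — is justifying the identity $\phi_\circ^{\mathbf{z}} = \sum_{w}\phi_w^{\mathbf{z}}$ and hence $\sigma_\circ^{\mathbf{z}} = \sum_w \sigma_w^{\mathbf{z}}$; this follows from the Bruhat-type decomposition $K = \bigsqcup_{w\in W}(B(F)\cap K)\,w\,J$ together with the definitions, but it deserves an explicit sentence. Everything else is an assembly of results already established in the excerpt.
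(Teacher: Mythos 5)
Your proposal is correct and follows essentially the same route as the paper: use $K$-bi-invariance to replace $\varpi^{\lambda}$ by $\varpi^{w_0\lambda}$, sum (\ref{eq:sigmaw}) over $w\in W$, and evaluate the sum via Proposition~\ref{prop:sphericaldem} and Proposition~\ref{prop:rlamns}. The only difference is that you make explicit the identity $\phi_{\circ}^{\mathbf{z}}=\sum_{w\in W}\phi_w^{\mathbf{z}}$ (hence $\sigma_{\circ}^{\mathbf{z}}=\sum_w\sigma_w^{\mathbf{z}}$), which the paper uses implicitly when it sums over $w$; your justification of it via the decomposition $G=\bigsqcup_w B(F)wJ$ is correct.
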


\begin{proof}
  Since $\sigma_{\circ}^{\mathbf{z}}$ is $K$-bi-invariant, 
  \[\sigma^\mathbf{z}_\circ(\varpi^\lambda)=
  \sigma^\mathbf{z}_\circ(w_0\varpi^\lambda w_0^{-1})=
  \sigma^\mathbf{z}_\circ(\varpi^{w_0\lambda})\]
  Summing (\ref{eq:sigmaw}) over $w \in W$ and applying
  Proposition~\ref{prop:sphericaldem} and Proposition~\ref{prop:rlamns} gives
  \[ \sigma_{\circ}^{\mathbf{z}} (\varpi^{w_0\lambda}) = q^{- \langle \lambda,
     \rho \rangle} \Omega \left( \prod_{\alpha \in \Phi^+} (1 -
     q\mathbf{z}^{- \alpha}) \mathbf{z}^{w_0 \lambda} \right) = q^{|
     \Phi^+ |} q^{- \langle \lambda, \rho \rangle} R_{\lambda} (\mathbf{z};
     q^{- 1}) . \qedhere\]
\end{proof}

\begin{proposition}
\label{prop:intsupport}
  Suppose that $\lambda \in \Lambda$ and let $y \in W$ be such that 
  $y(\lambda)$ is antidominant. Suppose that $k \in K$ such that $k
  \varpi^{\lambda} \in B y J$. Then $\varpi^{- y (\lambda)} k \varpi^{\lambda}
  \in N K$.
\end{proposition}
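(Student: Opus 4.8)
The plan is to translate everything into a statement about the Plücker-type minors $a_I$ from the discussion preceding Lemma~\ref{lem:plucker}, and then to argue as in Proposition~\ref{prop:conjid}. Set $\mu=y(\lambda)$, which is antidominant, and $g=\varpi^{-\mu}k\varpi^{\lambda}$. For $1\le m\le r$ and $I\subseteq\{1,\dots,r\}$ with $|I|=m$, left multiplication by $\varpi^{-\mu}$ rescales the $i$-th row by $\varpi^{-\mu_i}$ and right multiplication by $\varpi^{\lambda}$ rescales the $q$-th column by $\varpi^{\lambda_q}$, so
\[
a_I(g)=\varpi^{S(\lambda,I)-S(\mu,I_m)}\,a_I(k),\qquad S(\nu,I):=\sum_{q\in I}\nu_q .
\]
Because $\mu$ is antidominant, $S(\mu,I_m)=\mu_{r-m+1}+\dots+\mu_r$ is the sum of the $m$ largest parts of $\mu$, equivalently of $\lambda$; write $M_m$ for this number. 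I would first record the elementary criterion: $g\in NK$ if and only if for every $m$ the minors $a_I(g)$ with $|I|=m$ are all integral and not all in $\mathfrak p$. This follows from the Iwasawa decomposition $g=n\tau\kappa$ ($\tau\in T(F)$, $\kappa\in K$): the last $m$ rows of $n$ are supported on the last $m$ columns, so Cauchy--Binet gives $a_I(g)=(\tau_{r-m+1}\cdots\tau_r)\det(\kappa_{I_m,I})$, and $\min_{|I|=m}v(\det(\kappa_{I_m,I}))=0$ since $\kappa\in\GL_r(\mathfrak o)$; hence $\min_{|I|=m}v(a_I(g))=v(\tau_{r-m+1}\cdots\tau_r)$, and $g\in NK$ iff $\tau\in T(\mathfrak o)$ iff these partial sums all vanish.

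By the displayed formula and this criterion, it is enough to prove the single inequality
\[
v(a_I(k))\ \ge\ M_m-S(\lambda,I)\qquad\text{for all }m\text{ and all }|I|=m ,
\]
since the reverse bound $\min_{|I|=m}v(a_I(g))\le 0$ is automatic: $k\in\GL_r(\mathfrak o)$ forces $v(a_{I_0}(k))=0$ for some $I_0$ with $|I_0|=m$, and then $v(a_{I_0}(g))=S(\lambda,I_0)-M_m\le 0$. Together these give $\min_{|I|=m}v(a_I(g))=0$ for every $m$, which is exactly what the criterion needs.

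To prove the inequality I would use the hypothesis $k\varpi^{\lambda}\in ByJ$. Write $k\varpi^{\lambda}=byj$ with $b\in B(F)$ upper triangular and $j\in J$; note $yj\in K$ because $W\subseteq K$. Since the last $m$ rows of the upper triangular matrix $b$ are supported on its last $m$ columns, $a_I(byj)=c_m\,a_I(yj)$ for all $|I|=m$, where $c_m:=b_{r-m+1,r-m+1}\cdots b_{r,r}$ is independent of $I$. Let $\sigma\in S_r$ record the column of the nonzero entry in each row of $y$, so that $(y\lambda)_i=\lambda_{\sigma(i)}$ and the last $m$ rows of $yj$ are the rows of $j$ indexed by $L_m:=\sigma(I_m)$; then $a_I(yj)=\pm\det(j_{L_m,I})$, so $v(a_I(yj))\ge 0$, and for $I=L_m$ the matrix $j_{L_m,L_m}$ is a principal submatrix of the Iwahori element $j$, hence upper triangular modulo $\mathfrak p$ with unit diagonal, so $a_{L_m}(yj)\in\mathfrak o^{\times}$. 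Combining $\varpi^{S(\lambda,I)}a_I(k)=a_I(k\varpi^{\lambda})=c_m\,a_I(yj)$ in the case $I=L_m$ gives $v(c_m)=S(\lambda,L_m)+v(a_{L_m}(k))\ge S(\lambda,L_m)$, and $S(\lambda,L_m)=\sum_{p\in I_m}(y\lambda)_p=M_m$ precisely because $y(\lambda)$ is antidominant. Therefore $v(a_I(k))=v(c_m)+v(a_I(yj))-S(\lambda,I)\ge M_m-S(\lambda,I)$, completing the argument.

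The main obstacle is the bookkeeping in this last step: one must check that $L_m=\sigma(I_m)$ is simultaneously the column set for which $a_{L_m}(yj)$ is forced to be a unit by the Iwahori condition on $j$, and the index set carrying the $m$ largest parts of $\lambda$ by antidominance of $y(\lambda)$ — that is, that the conventions for the minors $a_I$, for the $W$-action on $\Lambda$, and for $J$ are aligned so that these two roles coincide. Everything else is routine matrix manipulation, exactly parallel to Proposition~\ref{prop:conjid}.
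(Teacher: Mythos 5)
Your proof is correct, but it takes a genuinely different route from the paper. The paper's proof is group-theoretic: it writes $k\varpi^{\lambda}=\varpi^{\mu}nyj$ with $n\in N$, $j\in J$, uses the Iwahori factorization of $j$ into one-parameter subgroups $x_{\alpha}(u_{\alpha})$, conjugates the factors with $y(\alpha)\in\Phi^{+}$ past $y$ into $N$, and shows the remaining factor $j_{0}$ satisfies $\varpi^{\lambda}j_{0}\varpi^{-\lambda}\in K$ because $\langle y(\alpha)^{\vee},y(\lambda)\rangle\geqslant 0$ for $y(\alpha)\in\Phi^{-}$ when $y(\lambda)$ is antidominant; this forces $\mu=y(\lambda)$ and hence $\varpi^{-y(\lambda)}k\varpi^{\lambda}\in NK$. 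You instead stay entirely within the minor calculus of Lemma~\ref{lem:plucker} and Proposition~\ref{prop:conjid}: you establish (via Cauchy--Binet and the Iwasawa decomposition) the criterion that $g\in NK$ iff $\min_{|I|=m}v(a_{I}(g))=0$ for every $m$, and then reduce the proposition to the single valuation inequality $v(a_{I}(k))\geqslant M_{m}-S(\lambda,I)$, which you extract from $k\varpi^{\lambda}\in ByJ$ using the fact that the principal minor $\det(j_{L_{m},L_{m}})$ of an Iwahori element is a unit, with antidominance of $y(\lambda)$ entering only through $S(\lambda,L_{m})=M_{m}$. The convention issue you flag does work out: with $y_{i,\sigma(i)}=1$ one has $\varpi^{y(\lambda)}=y\varpi^{\lambda}y^{-1}$, i.e.\ $(y\lambda)_{i}=\lambda_{\sigma(i)}$, which is exactly the convention the paper's own proof uses when it conjugates $x_{\alpha}$ by $y$, so $L_{m}=\sigma(I_{m})$ does carry both the unit-minor role and the $m$ largest parts of $\lambda$ (and the degenerate cases $a_{I}(k)=0$ are harmless with $v(0)=\infty$, since $c_{m}\neq 0$ and $a_{L_{m}}(yj)$ is a unit force $a_{L_{m}}(k)\neq 0$). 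What each approach buys: yours is elementary, uniform with the surrounding minor-based arguments (it is essentially the natural extension of the method of Proposition~\ref{prop:conjid}), and it actually computes the torus part of the Iwasawa decomposition of $\varpi^{-y(\lambda)}k\varpi^{\lambda}$; the paper's argument avoids matrix bookkeeping, uses only root subgroups, coroots, and Iwahori factorization, and therefore transfers essentially verbatim to an arbitrary split reductive group, where the $\GL_{r}$-specific minors $a_{I}$ are not available.
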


\begin{proof}
  If $\alpha$ is a root, we will denote by $x_{\alpha} : F \longrightarrow
  \GL_r$ the one-parameter subgroup corresponding to $\alpha$. We may
  write $k \varpi^{\lambda} = \varpi^{\mu} n y j$ where $\mu \in \Lambda$, $n
  \in N$ and $j \in J$. We will show that $\mu = y (\lambda)$.
  
  Using the Iwahori factorization 
	{\cite{IwahoriMatsumoto, CasselmanAdmissible}}, 
  we may write $j$ as a product of an element
  $t$ of $T (\mathfrak{o})$ and elements of the form $x_{\alpha} (u_{\alpha})$
  where $\alpha \in \Phi$ and $u_{\alpha} \in \mathfrak{o}$. (Actually
  $u_{\alpha} \in \mathfrak{p}$ if $\alpha$ is a negative root.) We
  may choose the order of these factors so that the factors where
  $y(\alpha)$ is a positive root are to the left of those for which
  it is a negative root. If $y (\alpha) \in \Phi^+$ we may conjugate 
  $x_{\alpha} (u_{\alpha})$ by $y$ and
  absorb it into $n$, so we may write
  \[ k \varpi^{\lambda} = \varpi^{\mu} n y j_0 t, \qquad t \in T
     (\mathfrak{o}), n \in N, j_0 = 
     \prod_{\substack{ \alpha \in \Phi\\ y (\alpha) \in \Phi^- }} 
     x_{\alpha} (u_{\alpha}) . \]
  Now with $y (\alpha) \in \Phi^-$ we have
  \[ \varpi^{\lambda} x_{\alpha} (u_{\alpha}) \varpi^{- \lambda} = x_{\alpha}
     (\varpi^{\langle \alpha^{\vee}, \lambda \rangle} u_{\alpha}) = x_{\alpha}
     (\varpi^{\langle y (\alpha)^{\vee}, y (\lambda) \rangle} u_{\alpha}) \in
     K, \]
  because $y (\lambda)$ is antidominant, so $\langle y (\alpha)^{\vee}, y
  (\lambda) \rangle \geqslant 0$. Thus $\varpi^{\lambda} j_0 \varpi^{-
  \lambda} \in K$. Thus 
  $k = \varpi^{\mu - y (\lambda)} n' y \varpi^{\lambda} j_0 \varpi^{- \lambda} t$ 
  where $n'\in N$, which implies that $\varpi^{\mu - y (\lambda)} n' \in K$. This 
  is possible only if $\mu = y (\lambda)$. Thus $\varpi^{- y
  (\lambda)} k \varpi^{\lambda} \in N K$.
\end{proof}

\begin{remark}
  \label{rem:ymincase}
  If we assume that $y$ is minimal such that $y (\lambda)$ is antidominant, a
  refinement of this proof shows that $\varpi^{- y (\lambda)} k
  \varpi^{\lambda} \in N y J$. We do not need this, so we do not prove it.
\end{remark}

\begin{theorem}
  Let $\lambda \in \Lambda$ and let $y \in W$ be the element of minimal length
  such that $y (\lambda)$ is antidominant. Then for any $w \in W$ we have
  \begin{equation}
  \label{eq:sigmafineval} \sigma^{\mathbf{z}}_w (\varpi^{\lambda}) = q^{- \langle \rho,
     \lambda \rangle} \tau^{y (\lambda)}_{w, y} (\mathbf{z} ; q) .
  \end{equation}
  Equivalently,
  \begin{equation}
  \label{eq:equivsigmaeval}
   \sigma^{\mathbf{z}}_w (\varpi^{\lambda}) = q^{-\langle\rho,\lambda\rangle}q^{|\Phi^+|}
   \tau^{-y(\lambda)}_{ww_0,yw_0}(\mathbf{z}^{-1},q^{-1}).
   \end{equation}
\end{theorem}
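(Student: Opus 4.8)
The plan is to deduce both displayed identities from a single base‑case evaluation together with the Demazure--Lusztig recursion. The equivalence of (\ref{eq:sigmafineval}) and (\ref{eq:equivsigmaeval}) is purely formal: substituting $\lambda\mapsto y(\lambda)$ into the involution identity (\ref{eq:tauinvolution}) of Theorem~\ref{thm:taurecurse} gives $\tau^{y(\lambda)}_{w,y}(\mathbf{z};q)=q^{|\Phi^+|}\tau^{-y(\lambda)}_{ww_0,yw_0}(\mathbf{z}^{-1};q^{-1})$, and multiplying through by $q^{-\langle\rho,\lambda\rangle}$ turns one into the other; so it is enough to prove (\ref{eq:sigmafineval}).

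For (\ref{eq:sigmafineval}) I would fix $\mathbf{z},\lambda,y$ and observe that both families $w\mapsto\sigma_w^{\mathbf{z}}(\varpi^\lambda)$ and $w\mapsto q^{-\langle\rho,\lambda\rangle}\tau^{y(\lambda)}_{w,y}(\mathbf{z};q)$ satisfy the same recursion in $w$: the first by Theorem~\ref{thm:demrecurse}, equation (\ref{eq:sigmademrec}) (used directly when $s_iw>w$, and inverted using (\ref{eq:dlinverse}) when $s_iw<w$), the second by the defining recursion (\ref{eq:sigmarecurse}) of $\tau^{y(\lambda)}_{\bullet,y}$. Since $W$ is generated by the simple reflections, such a family is determined by its value at any one Weyl element, so by the uniqueness clause of Theorem~\ref{thm:taurecurse} it suffices to check (\ref{eq:sigmafineval}) for $w=y$ alone, i.e.\ to evaluate $\sigma_y^{\mathbf{z}}(\varpi^\lambda)$ and compare it with $q^{-\langle\rho,\lambda\rangle}\tau^{y(\lambda)}_{y,y}(\mathbf{z};q)=q^{-\langle\rho,\lambda\rangle}q^{\ell(y)}\mathbf{z}^{y(\lambda)}$. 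This is the spherical‑model counterpart of the reduction of Theorem~\ref{thm:demeval} to the monostatic system of Proposition~\ref{prop:monostatic}.

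To evaluate the base case, write $\sigma_y^{\mathbf{z}}(\varpi^\lambda)=\int_K\phi_y^{\mathbf{z}}(k\varpi^\lambda)\,dk$. Since $\varpi^{y(\lambda)}\in T(F)\subseteq B(F)$, the factorization $k\varpi^\lambda=\varpi^{y(\lambda)}\cdot(\varpi^{-y(\lambda)}k\varpi^\lambda)$ together with the $B(F)$‑equivariance of $\phi_y^{\mathbf{z}}$ pulls out the scalar $\delta^{1/2}\chi_{\mathbf{z}}(\varpi^{y(\lambda)})=q^{-\langle\rho,y(\lambda)\rangle}\mathbf{z}^{y(\lambda)}$. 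The remaining integrand $\phi_y^{\mathbf{z}}(\varpi^{-y(\lambda)}k\varpi^\lambda)$ vanishes unless $k\varpi^\lambda\in B(F)yJ$, and for such $k$ Proposition~\ref{prop:intsupport} gives $\varpi^{-y(\lambda)}k\varpi^\lambda\in NK$; writing this element as $n\kappa$ with $n\in N$, $\kappa\in K$, and using that $N$ lies in the kernel of $\delta^{1/2}\chi_{\mathbf{z}}$ while $\phi_y^{\mathbf{z}}\equiv 1$ on $B(F)yJ\cap K$ (for $\kappa$ there the $B(F)$‑factor may be taken in $B(F)\cap K$, on which $\delta^{1/2}\chi_{\mathbf{z}}$ is trivial), one sees that the integrand is precisely the indicator function of $\{k\in K:k\varpi^\lambda\in B(F)yJ\}$. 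Hence $\sigma_y^{\mathbf{z}}(\varpi^\lambda)=q^{-\langle\rho,y(\lambda)\rangle}\mathbf{z}^{y(\lambda)}\cdot\mathrm{vol}\{k\in K:k\varpi^\lambda\in B(F)yJ\}$.

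The step I expect to be the main obstacle is the remaining volume computation. Arguing as in Proposition~\ref{prop:conjid} and Lemma~\ref{lem:jlamindex} -- via the minor criterion of Lemma~\ref{lem:plucker} together with an Iwahori factorization -- one should identify $\{k\in K:k\varpi^\lambda\in B(F)yJ\}$ with a congruence subgroup of $J$ (when $y=1$ this is Proposition~\ref{prop:conjid}, where the subgroup is $J_{-\lambda}$) whose index in $J$ is a product over the positive roots; the $\ell(y)$ roots made negative by $y^{-1}$ contribute a factor $q^{\ell(y)}$, and the remaining factors combine with the prefactor $q^{-\langle\rho,y(\lambda)\rangle}\mathbf{z}^{y(\lambda)}$ into the power of $q$ asserted in (\ref{eq:sigmafineval}) -- a point worth checking carefully, for instance against the monostatic lattice model of Proposition~\ref{prop:monostatic}. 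Remark~\ref{rem:ymincase}, available because $y$ is chosen of minimal length, is convenient for keeping this bookkeeping under control. Together with the recursion and the uniqueness above, this would establish the theorem.
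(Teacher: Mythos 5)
Your reduction is the same as the paper's up to the last step: the equivalence of (\ref{eq:sigmafineval}) and (\ref{eq:equivsigmaeval}) via (\ref{eq:tauinvolution}), the reduction to the single value $w=y$ using Theorem~\ref{thm:demrecurse} and the uniqueness in Theorem~\ref{thm:taurecurse}, and the support analysis giving $\sigma_y^{\mathbf{z}}(\varpi^{\lambda})=q^{-\langle\rho,y(\lambda)\rangle}\mathbf{z}^{y(\lambda)}\cdot V$ with $V=\operatorname{vol}\{k\in K: k\varpi^{\lambda}\in ByJ\}$ all match the paper. The genuine gap is the evaluation of $V$, which you acknowledge as ``the main obstacle'' but only sketch, and the sketch as stated would fail. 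The set $\{k\in K: k\varpi^{\lambda}\in ByJ\}$ is not a congruence subgroup of $J$ when $y\neq 1$: it is not even contained in $J$, since $y$ itself belongs to it ($y\varpi^{\lambda}=\varpi^{y(\lambda)}y$ with $\varpi^{y(\lambda)}\in B$), so the analogue of Proposition~\ref{prop:conjid} does not identify it with a group of the form $J_{\mu}$, and the index count of Lemma~\ref{lem:jlamindex} does not apply directly. Moreover the refinement you lean on (Remark~\ref{rem:ymincase}) is explicitly left unproved in the paper, and the paper's discussion of the error in \cite{BrubakerBumpFriedbergRogawski} is precisely a warning that evaluating this constant is the delicate point. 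So as written, the base-case value $q^{-\langle\rho,\lambda\rangle}q^{\ell(y)}\mathbf{z}^{y(\lambda)}$ is asserted rather than proved.

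The paper closes this gap by a global argument that avoids any direct volume computation: it records only that $\sigma_y^{\mathbf{z}}(\varpi^{\lambda})=v(\lambda,q)\,q^{\ell(y)}\mathbf{z}^{y(\lambda)}$ for some constant $v(\lambda,q)$ independent of $\mathbf{z}$, propagates this through the recursion to get $\sigma_w^{\mathbf{z}}(\varpi^{\lambda})=v(\lambda,q)\,\tau^{y(\lambda)}_{w,y}(\mathbf{z};q)$ for all $w$, and then sums over $w\in W$: since $y(\lambda)$ is antidominant, (\ref{eq:dualrform}) gives $\sigma_{\circ}^{\mathbf{z}}(\varpi^{\lambda})=v(\lambda,q)\,q^{|\Phi^+|}R_{w_0y(\lambda)}(\mathbf{z};q^{-1})$, while Macdonald's formula (\ref{eq:macdonald}) applied to the dominant weight $w_0y(\lambda)$, together with the $K$-bi-invariance identity $\sigma_{\circ}^{\mathbf{z}}(\varpi^{\lambda})=\sigma_{\circ}^{\mathbf{z}}(\varpi^{w_0y(\lambda)})$, forces $v(\lambda,q)=q^{-\langle\lambda,\rho\rangle}$. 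If you want to keep your structure, replace your local volume computation by this determination of the constant (or else genuinely prove the refinement of Proposition~\ref{prop:intsupport} and carry out a careful coset-volume count for $ByJ\varpi^{-\lambda}\cap K$, which is substantially more work).
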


\begin{proof}
  Note that (\ref{eq:sigmafineval}) and (\ref{eq:equivsigmaeval}) are equivalent
  by (\ref{eq:tauinvolution}). We will prove (\ref{eq:sigmafineval}).
  First we show that
  \begin{equation}
    \label{eq:tensigma} \sigma_y^{\mathbf{z}} (\varpi^{\lambda}) = v (\lambda, q) q^{\ell
    (y)} \mathbf{z}^{y (\lambda)}
  \end{equation}
  where the constant $v (\lambda, q)$ is independent of $\mathbf{z}$; later
  we will evaluate this constant. We have
  \[ \sigma_y^{\mathbf{z}} (\varpi^{\lambda}) = \int_K \phi_y^{\mathbf{z}} (k \varpi^{\lambda}) \, d k.
  \]
  If $\phi_y^{\mathbf{z}} (k \varpi^{\lambda})$ is nonzero, then $k \varpi^{\lambda} \in B
  y J$ and so by Proposition~\ref{prop:intsupport} we have $\varpi^{- y
  (\lambda)} k \varpi^{\lambda} \in N K$. It follows that
  \[ \phi_y^{\mathbf{z}} (k \varpi^{\lambda}) = \delta^{1 / 2} \chi_{\mathbf{z}}
     (\varpi^{y (\lambda)}) \phi_y^{\mathbf{z}} (\varpi^{- y (\lambda)} k \varpi^{\lambda})
     = q^{- \langle \rho, y (\lambda) \rangle} \mathbf{z}^{y (\lambda)} . \]
  Therefore
  \[ \sigma_y^{\mathbf{z}} (\varpi^{\lambda}) = q^{- \langle \rho, y (\lambda) \rangle}
     \mathbf{z}^{y (\lambda)} \cdot V \]
  where $V$ is the volume of $\{ k \in K|k \varpi^{\lambda} \in ByJ \}$. 
  Therefore (\ref{eq:tensigma}) is valid with
  \[ v (\lambda, q) = q^{-\langle \rho, y (\lambda) \rangle}\,q^{-\ell (y)}\,V.
  \]
  
  Now using Theorem~\ref{thm:demrecurse} and the definition of $\tau$
  (implicit in Theorem~\ref{thm:taurecurse}) we obtain
  \begin{equation}
    \label{eq:wteneval} \sigma_w^{\mathbf{z}} (\varpi^{\lambda}) = v (\lambda, q) \tau^{y
    (\lambda)}_{w, y} (\mathbf{z}; q)
  \end{equation}
  for all $w \in W$. We may now evaluate the constant $v (\lambda, q)$ as
  follows. Since $y (\lambda)$ is antidominant, summing this identity over all
  $w \in W$ and using (\ref{eq:dualrform}) gives
  \[ \sigma_{\circ}^{\mathbf{z}} (\varpi^{\lambda}) = v (\lambda, q) \, q^{| \Phi^+ |}
     R_{w_0 y (\lambda)} (\mathbf{z}; q^{- 1}) . \]
  On the other hand since $w_0 y (\lambda)$ is dominant, by
  (\ref{eq:macdonald}) we have
  \[ \sigma_{\circ}^{\mathbf{z}} (\varpi^{w_0 y (\lambda)}) = \, q^{| \Phi^+ |} q^{-
     \langle \lambda, \rho \rangle} R_{w_0 y (\lambda)} (\mathbf{z}; q^{-
     1}) . \]
  Now the spherical function $\sigma_{\circ}^{\mathbf{z}}$ is $K$-bi-invariant, so
  $\sigma_{\circ}^{\mathbf{z}} (\varpi^{\lambda}) = \sigma^{\mathbf{z}}_\circ (\varpi^{w_0 y (\lambda)})$
  and so
  \[ v (\lambda, \rho) = q^{- \langle \lambda, \rho \rangle} . \]
  Combining this with (\ref{eq:wteneval}) proves (\ref{eq:sigmafineval}).
\end{proof}

We would now like to compare this with the partition functions of
the colored models.

\begin{theorem}
\label{thm:mainthm}
Let $\lambda$ be a weight, and let $y\in W$ be minimal such
that $y(\lambda)$ is antidominant. Then
\begin{equation}
\label{eq:comparison}
\sigma_w^{\mathbf{z}}(\varpi^\lambda)=
q^{-\langle\rho,\lambda\rangle}q^{|\Phi^+|}
Z(\mathfrak{S}_{-y(\lambda),ww_0\mathbf{c}_0,yw_0\mathbf{c}_0}
(\mathbf{z}^{-1};q^{-1})).
\end{equation}
\end{theorem}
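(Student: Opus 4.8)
The plan is to deduce (\ref{eq:comparison}) directly by combining two results already established: the $p$-adic evaluation of $\sigma^{\mathbf{z}}_w(\varpi^{\lambda})$ in terms of the functions $\tau^{\bullet}_{\bullet,\bullet}$ of Theorem~\ref{thm:taurecurse}, and the identification of those functions with the partition functions of the colored models. Concretely, I would start from the previous theorem in the form (\ref{eq:equivsigmaeval}),
\[
\sigma^{\mathbf{z}}_w(\varpi^{\lambda}) \;=\; q^{-\langle\rho,\lambda\rangle}\,q^{|\Phi^+|}\,\tau^{-y(\lambda)}_{ww_0,\,yw_0}(\mathbf{z}^{-1};q^{-1}),
\]
and then apply Theorem~\ref{thm:demeval}, which identifies $\tau^{\mu}_{w',y'}(\mathbf{w};t)$ with $Z(\mathfrak{S}_{\mu,\,y'\mathbf{c}_0,\,w'\mathbf{c}_0}(\mathbf{w};t))$, specialized to $\mu=-y(\lambda)$, $\mathbf{w}=\mathbf{z}^{-1}$, $t=q^{-1}$, $w'=ww_0$, $y'=yw_0$. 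Matching the boundary flags then produces exactly the right-hand side of (\ref{eq:comparison}). So the proof is a short chain of citations, and the actual work is the bookkeeping needed to make the two statements line up.

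Two points in that bookkeeping need attention. First, the weight $-y(\lambda)$ appearing in the model is dominant --- because $y(\lambda)$ is antidominant --- but need not be a partition, so Theorem~\ref{thm:demeval} has to be invoked in the generality of dominant weights rather than partitions. This is exactly the situation of Remark~\ref{rem:dominantpf}: one enlarges the grid with columns carrying negative labels and renormalizes by $(z_1\cdots z_r)^M$, and the monostatic evaluation (Proposition~\ref{prop:monostatic}) together with the Demazure--Lusztig recursion (Proposition~\ref{prop:demtrain}), hence Theorem~\ref{thm:demeval} itself, continue to hold verbatim. Second, I would keep the two independent $w_0$-twists straight: the twist $\lambda\mapsto y(\lambda)$ that converts $\varpi^{\lambda}$ to an antidominant cocharacter (this is where the $K$-bi-invariance of $\sigma^{\mathbf{z}}_\circ$ and the support statement of Proposition~\ref{prop:intsupport} enter, through the proof of (\ref{eq:sigmafineval})), and the twist $(w,y)\mapsto(ww_0,yw_0)$ accompanied by $(\mathbf{z},q)\mapsto(\mathbf{z}^{-1},q^{-1})$ coming from the involution identity (\ref{eq:tauinvolution}). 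A slightly more self-contained variant would avoid quoting (\ref{eq:equivsigmaeval}) and instead begin from (\ref{eq:sigmafineval}), use Theorem~\ref{thm:demeval} to write $\sigma^{\mathbf{z}}_w(\varpi^{\lambda})=q^{-\langle\rho,\lambda\rangle}Z(\mathfrak{S}_{y(\lambda),\,y\mathbf{c}_0,\,w\mathbf{c}_0}(\mathbf{z};q))$, and then apply the model-level incarnation of (\ref{eq:tauinvolution}) to pass to arguments $(\mathbf{z}^{-1};q^{-1})$.

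The hard part will not be any new mathematics --- there is none --- but the consistency of conventions: one must fix once and for all which of the two flags $\mathbf{c},\mathbf{d}$ (top versus right boundary) plays the role of the base index and which the role of the recursion index of $\tau^{\lambda}_{w,y}$, the direction in which $W=S_r$ acts on flags, and the way the two $w_0$-twists above interact. Once these are pinned down compatibly with Theorem~\ref{thm:demeval}, the involution (\ref{eq:tauinvolution}), and Remark~\ref{rem:dominantpf}, the identity (\ref{eq:comparison}) follows with no further computation.
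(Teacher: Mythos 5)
Your proposal is correct and is essentially the paper's own proof: the paper likewise deduces (\ref{eq:comparison}) by comparing (\ref{eq:equivsigmaeval}) with Theorem~\ref{thm:demeval}, the key point being exactly the one you identify, that $-y(\lambda)$ is dominant (so the antidominant form (\ref{eq:equivsigmaeval}) rather than (\ref{eq:sigmafineval}) must be used). Your additional remarks on invoking Remark~\ref{rem:dominantpf} for dominant weights that are not partitions and on matching the flag conventions are precisely the bookkeeping the paper leaves implicit.
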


\begin{proof}
The weight $y(\lambda)$ here is antidominant, whereas in
Theorem~\ref{thm:demeval}, $\lambda$ is dominant. Therefore
we must use (\ref{eq:equivsigmaeval}) instead of (\ref{eq:sigmafineval})
for this comparison. With this in mind, the result follows
from comparing (\ref{eq:equivsigmaeval}) with Theorem~\ref{thm:demeval}.
\end{proof}

\begin{corollary}
\label{cor:maincor}
Let $w\in W$ and $g\in G$. Then there exists a solvable
lattice model whose partition function equals $\sigma^{\mathbf{z}}_w(g)$.
\end{corollary}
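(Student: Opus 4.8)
The plan is to reduce the statement for an arbitrary $g\in G$ to the special case $g=\varpi^{\lambda}$, which is covered by Theorem~\ref{thm:mainthm}, and then to absorb the scalar prefactor occurring there into the Boltzmann weights.

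First I would observe that $\sigma_w^{\mathbf{z}}$ is left $K$-invariant and right $J$-invariant. Left $K$-invariance is immediate from $\sigma_w^{\mathbf{z}}(g)=\int_K\phi_w^{\mathbf{z}}(kg)\,dk$ after the substitution $k\mapsto k(k')^{-1}$, and right $J$-invariance holds because $\phi_w^{\mathbf{z}}$ is right $J$-invariant by its very definition. Hence $\sigma_w^{\mathbf{z}}$ descends to a function on the double coset space $K\backslash G/J$. By the affine (Iwahori--Bruhat) decomposition $G=\bigsqcup_{x\in\widetilde{W}}JxJ$, with $\widetilde{W}=\Lambda\rtimes W$ realized in $G$ through $x=\varpi^{\lambda}w$, together with the finite Bruhat decomposition $K=\bigsqcup_{u\in W}JuJ$, one has $K\backslash G/J\cong W\backslash\widetilde{W}$, and every $W$-orbit on $\widetilde{W}$ contains a unique element of the form $\varpi^{\mu}$ with $\mu\in\Lambda$ (choose $u\in W$ to make the $W$-component trivial). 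Thus $G=\bigsqcup_{\mu\in\Lambda}K\varpi^{\mu}J$, and for the given $g$ there is $\mu\in\Lambda$ with $\sigma_w^{\mathbf{z}}(g)=\sigma_w^{\mathbf{z}}(\varpi^{\mu})$.

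Next I would apply Theorem~\ref{thm:mainthm} with this $\mu$ and with $y\in W$ minimal such that $y(\mu)$ is antidominant, obtaining
\[
\sigma_w^{\mathbf{z}}(g)=q^{-\langle\rho,\mu\rangle}q^{|\Phi^{+}|}\,
Z\bigl(\mathfrak{S}_{-y(\mu),\,ww_0\mathbf{c}_0,\,yw_0\mathbf{c}_0}(\mathbf{z}^{-1};q^{-1})\bigr).
\]
Here $-y(\mu)$ is a dominant weight, possibly with negative parts, so the model is understood as in Remark~\ref{rem:dominantpf} (the renormalization by a power of $z_1\cdots z_r$ does not affect solvability); and the colored system on the right is solvable, since it satisfies the Yang--Baxter equation of Figure~\ref{fig:ybe} for the R-matrix of Figure~\ref{fig:rmatrix}. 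Finally, to realize the constant $q^{-\langle\rho,\mu\rangle}q^{|\Phi^{+}|}$ within a lattice model, I would multiply the local Boltzmann weight at a single chosen vertex of this system by that scalar; since the partition function is multilinear in the local weights, the partition function of the resulting (still solvable) model equals $\sigma_w^{\mathbf{z}}(g)$ exactly.

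There is no real obstacle here beyond bookkeeping: the substantive content is entirely in Theorem~\ref{thm:mainthm} and the results feeding it, and the only points needing care are the identification $K\backslash G/J\leftrightarrow\Lambda$ used to reduce to $g=\varpi^{\mu}$ and the elementary remark that rescaling one Boltzmann weight converts a scalar multiple of a partition function back into a genuine partition function.
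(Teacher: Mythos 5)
Your proof is correct and follows essentially the same route as the paper: reduce to $g=\varpi^{\mu}$ using the left $K$- and right $J$-invariance of $\sigma_w^{\mathbf{z}}$ together with the fact that the elements $\varpi^{\mu}$, $\mu\in\Lambda$, represent the double cosets $K\backslash G/J$, and then invoke Theorem~\ref{thm:mainthm}. Your additional bookkeeping (justifying the coset representatives via the affine Bruhat decomposition, appealing to Remark~\ref{rem:dominantpf} when $-y(\mu)$ has negative parts, and absorbing the constant $q^{-\langle\rho,\mu\rangle}q^{|\Phi^{+}|}$ into a single vertex weight) only makes explicit what the paper leaves implicit.
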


\begin{proof}
Since $\phi_w$ is constant on the double cosets for $K\backslash G/J$ it
is sufficient to prove this for a set of double coset representatives,
which we take to be $\varpi^\lambda$ with $\lambda\in\Lambda$. The
result thus follows from Theorem~\ref{thm:mainthm}.
\end{proof}
\bibliographystyle{habbrv}
\bibliography{colored-bosons}

\end{document}